 \newtheorem{remark}{Remark}
 \newtheorem{lemma}[remark]{Lemma}
 \newtheorem{theorem}[remark]{Theorem}
 \newtheorem{proposition}[remark]{Proposition}
 \newtheorem{corollary}[remark]{Corollary}
\newcommand{\Sd}{\operatorname{Sd}}
\newcommand{\Sgamma}{\operatorname{S\gamma}}
\newcommand{\Perm}{\operatorname{Perm}}
\title{Simultaneous Resolvability in Families of Corona Product Graphs}
\author{Yunior Ram\'{\i}rez-Cruz, Alejandro Estrada-Moreno\\ and Juan A.
Rodr\'{\i}guez-Vel\'{a}zquez
    \\
{\small Departament d'Enginyeria Inform\`atica i Matem\`atiques,}\\
{\small Universitat Rovira i Virgili,}  {\small Av. Pa\"{\i}sos
Catalans 26, 43007 Tarragona, Spain.} \\{\small
yunior.ramirez\@@urv.cat, alejandro.estrada\@@urv.cat, juanalberto.rodriguez\@@urv.cat}
}
\begin{document}
\maketitle

\begin{abstract}
Let ${\cal G}$ be a graph family defined on a common vertex set $V$ and let $d$ be a distance defined on every graph $G\in {\cal G}$. 
A set   $S\subset  V$ is said to be a simultaneous metric generator for ${\cal G}$ if for every $G\in {\cal G}$ and  every pair of different vertices $u,v\in V$ there exists $s\in S$ such that $d(s,u)\ne d(s,v)$.
The  simultaneous metric  dimension of ${\cal G}$ is the smallest integer $k$ such that there is a  simultaneous metric generator for ${\cal G}$  of cardinality
$k$.  We study the  simultaneous metric dimension of families composed by corona product graphs. Specifically, we focus on the case of two  particular distances defined on every $G\in {\cal G}$, namely,  the geodesic distance $d_G$ and  the distance $d_{G,2}:V\times V\rightarrow \mathbb{N}\cup \{0\}$ defined as $d_{G,2}(x,y)=\min\{d_{G}(x,y),2\}$.
\end{abstract}

\section{Introduction}
A generator of a metric space $(X,d)$ is a set $S\subset X$ of points in the space  with the property that every point of $X$  is uniquely determined by the distances from the elements of $S$. Given a simple and connected graph $G=(V,E)$, we consider the function $d_G:V\times V\rightarrow \mathbb{N}\cup \{0\}$, where $d_G(x,y)$ is the length of a shortest path between $u$ and $v$ and $\mathbb{N}$ is the set of positive integers. Then $(V,d_G)$ is a metric space since $d_G$ satisfies $(i)$ $d_G(x,x)=0$  for all $x\in V$,$(ii)$  $d_G(x,y)=d_G(y,x)$  for all $x,y \in V$ and $(iii)$ $d_G(x,y)\le d_G(x,z)+d_G(z,y)$  for all $x,y,z\in V$. A vertex $v\in V$ is said to \textit{distinguish} two vertices $x$ and $y$ if $d_G(v,x)\ne d_G(v,y)$.
A set $S\subset V$ is said to be a \emph{metric generator} for $G$ if any pair of vertices of $G$ is
distinguished by some element of $S$. A minimum cardinality metric generator is called a \emph{metric basis}, and
its cardinality the \emph{metric dimension} of $G$, denoted by $\dim(G)$.  

The notion of metric dimension of a graph was introduced by Slater in \cite{Slater1975}, where metric generators were called \emph{locating sets}. Harary and Melter independently introduced the same concept in  \cite{Harary1976}, where metric generators were called \emph{resolving sets}. 

The concept of adjacency generator\footnote{Adjacency generators were called adjacency resolving sets in   \cite{JanOmo2012}.} was introduced by Jannesari and Omoomi in \cite{JanOmo2012} as a tool to study the metric dimension of lexicographic product graphs. A set $S\subset V$ of vertices in a graph $G=(V,E)$ is said to be  an \emph{adjacency generator} for $G$  if for every two vertices $x,y\in V-S$ there exists $s\in S$ such that $s$ is adjacent to exactly one of $x$ and $y$. A minimum cardinality adjacency generator is called an \emph{adjacency basis} of $G$, and its cardinality  the \emph{adjacency dimension} of $G$,  denoted by $\dim_A(G)$.
Since any adjacency basis is a metric generator, $\dim(G)\le \dim_A(G)$.  Besides, for any connected graph $G$ of diameter at most two, $\dim_A(G)=\dim(G)$. Moreover, $S$ is an adjacency generator for $G$ if and only if $S$ is an adjacency generator for its complement $\overline{G}$. This is justified by the fact that given an adjacency generator $S$ for $G$, it holds that for every $x,y\in V- S$ there exists $s\in S$ such that $s$ is adjacent to exactly one of $x$ and $y$, and this property holds in $\overline{G}$. Thus, $\dim_A(G)=\dim_A(\overline{G}).$

This concept has been studied further by Fernau and Rodr\'{i}guez-Vel\'{a}zquez in \cite{RV-F-2013,MR3218546} where they showed
that the metric dimension of the corona product of a graph of order $n$ and some non-trivial graph $H$ equals $n$ times the adjacency dimension of $H$. As a consequence of this strong relation they showed that the problem of computing the adjacency dimension is NP-hard.

As pointed out in \cite{RV-F-2013,MR3218546}, 
any  adjacency generator of a graph $G=(V,E)$ is  also a metric generator in a suitably chosen metric space. Given a positive integer $t$,  we define the distance function $d_{G,t}:V\times V\rightarrow \mathbb{N}\cup \{0\}$, where
\begin{equation*}\label{distinguishAdj}
d_{G,t}(x,y)=\min\{d_G(x,y),t\}.%\tag{$\bullet$}.
\end{equation*}
Then any metric generator for $(V,d_{G,t})$ is a metric generator for $(V,d_{G,t+1})$ and, as a consequence, the metric dimension of $(V,d_{G,t+1})$  is less than or equal to the metric dimension of $(V,d_{G,t})$. In particular, the metric dimension of $(V,d_{G,1})$ is equal to $|V|-1$,  the metric dimension of $(V,d_{G,2})$ is equal to $\dim_A(G)$ and, if $G$ has diameter $D(G)$, then $d_{G,D(G)}=d_G$ and so  the metric dimension of  $(V,d_{G,D(G)})$  is equal to $\dim(G)$.
Notice that when using the metric $d_{G,t}$ the concept of metric generator needs not be restricted to the case of connected graphs\footnote{For any pair of vertices $x,y$ belonging to different connected components of $G$ we can assume that $d_G(x,y)=\infty$ and so $d_{G,t}(x,y)=t$ for any $t$ greater than or equal to the maximum diameter of a connected component of $G$.}. 

Let ${\mathcal G}=\{G_1,G_2,...,G_k\}$ be a family  of (not necessarily edge-disjoint) connected graphs $G_i=(V,E_i)$ with common vertex set $V$ (the union of whose edge sets is not necessarily the complete graph). Ram\'{i}rez-Cruz, Oellermann  and  Rodr\'{i}guez-Vel\'{a}zquez defined in \cite{Ramirez-Cruz-Rodriguez-Velazquez_2014,Ramirez2014}   a \textit{simultaneous metric generator} for ${\mathcal{G}}$ as a set $S\subset V$ such that $S$ is simultaneously a metric generator for each $G_i$. A smallest simultaneous metric generator for ${\mathcal{G}}$ is  a \emph{simultaneous metric basis} of ${\mathcal{G}}$, and
its cardinality the \emph{simultaneous metric dimension} of ${\mathcal{G}}$, is denoted by $\Sd({\mathcal{G}})$ or explicitly by $\Sd( G_1,G_2,...,G_k )$. By analogy, we defined in \cite{Ramirez_Estrada_Rodriguez_2015} the concept of \emph{simultaneous adjacency generator} for ${\cal G}$,  \emph{simultaneous adjacency basis} of ${\cal G}$ and  the \emph{simultaneous adjacency dimension} of ${\cal G}$, denoted by $\Sd_A({\cal G})$ or explicitly by $\Sd_A(G_1,G_2,...,G_t)$.  For instance, the set $\{1,3,6,7,8\}$ is a simultaneous adjacency basis of the family ${\cal G}=\{G_1,G_2,G_3\}$ shown in Figure  \ref{ExSimultaneousAdjBasis},  while the set $\{1,6,7,8\}$ is a simultaneous metric basis, so $\Sd_A({\cal G})=5$ and $\Sd({\cal G})=4$.

\begin{figure}[h]
\begin{center}

\begin{tikzpicture}[inner sep=0.7mm, place/.style={circle,draw=black!40,
fill=white,thick},dxx/.style={circle,draw=black!99,fill=black!99,thick},fplace/.style={circle,draw=black!40,
fill=black!40,thick},
transition/.style={rectangle,draw=black!50,fill=black!20,thick},line width=1pt,scale=0.5]

%% G_1

\coordinate (v19) at (72:2);
\coordinate (v18) at (144:2);
\coordinate (v17) at (216:2);
\coordinate (v16) at (288:2);
\coordinate (v15) at (0,0);
\coordinate (v14) at (2,0);
\coordinate (v13) at (4,0);
\coordinate (v12) at (6,0);
\coordinate (v11) at (8,0);

\draw[black!40] (v11) -- (v12) -- (v13) -- (v14) -- (v15) -- (v19);
\draw[black!40] (v15) -- (v18);
\draw[black!40] (v15) -- (v17);
\draw[black!40] (v15) -- (v16);

\node [dxx] at (v11) {};
\node at (8.5,-0.5) {$1$};
\node [place] at (v12) {};
\node at (6.5,-0.5) {$2$};
\node [fplace] at (v13) {};
\node at (4.5,-0.5) {$3$};
\node [place] at (v14) {};
\node at (2.5,-0.5) {$4$};
\node [place] at (v15) {};
\node at (0.7,-0.5) {$5$};
\node [dxx] at (v16) {};
\node at (288:2.7) {$6$};
\node [dxx] at (v17) {};
\node at (216:2.7) {$7$};
\node [dxx] at (v18) {};
\node at (144:2.7) {$8$};
\node [place] at (v19) {};
\node at (72:2.7) {$9$};

\coordinate [label=center:{$G_1$}] (G1) at (4,-2);

%% G_2

\coordinate (v29) at (13,2);
\coordinate (v28) at (11,0);
\coordinate (v27) at (13,-2);
\coordinate (v26) at (13,0);
\coordinate (v25) at (15,0);
\coordinate (v24) at (17,0);
\coordinate (v23) at (19,0);
\coordinate (v22) at (21,0);
\coordinate (v21) at (23,0);

\draw[black!40] (v21) -- (v22) -- (v23) -- (v24) -- (v25) -- (v26) -- (v29);
\draw[black!40] (v26) -- (v28);
\draw[black!40] (v26) -- (v27);

\node [dxx] at (v21) {};
\node at (23.5,-0.5) {$1$};
\node [place] at (v22) {};
\node at (21.5,-0.5) {$2$};
\node [fplace] at (v23) {};
\node at (19.5,-0.5) {$3$};
\node [place] at (v24) {};
\node at (17.5,-0.5) {$4$};
\node [place] at (v25) {};
\node at (15.5,-0.5) {$5$};
\node [dxx] at (v26) {};
\node at (13.5,-0.5) {$6$};
\node [dxx] at (v27) {};
\node at (13,-2.7) {$7$};
\node [dxx] at (v28) {};
\node at (10.3,0) {$8$};
\node [place] at (v29) {};
\node at (13,2.7) {$9$};

\coordinate [label=center:{$G_2$}] (G2) at (18,-2);

\end{tikzpicture}

\begin{tikzpicture}[inner sep=0.7mm, place/.style={circle,draw=black!40,
fill=white,thick},dxx/.style={circle,draw=black!99,fill=black!99,thick},fplace/.style={circle,draw=black!40,
fill=black!40,thick},
transition/.style={rectangle,draw=black!50,fill=black!20,thick},line width=1pt,scale=0.5]

%% G_1

\coordinate (v19) at (120:2);
\coordinate (v18) at (240:2);
\coordinate (v17) at (0,0);
\coordinate (v16) at (2,0);
\coordinate (v15) at (4,0);
\coordinate (v14) at (6,0);
\coordinate (v13) at (8,0);
\coordinate (v12) at (10,0);
\coordinate (v11) at (12,0);

\draw[black!40] (v11) -- (v12) -- (v13) -- (v14) -- (v15) -- (v16) -- (v17) -- (v19);
\draw[black!40] (v17) -- (v18);

\node [dxx] at (v11) {};
\node at (12.5,-0.5) {$1$};
\node [place] at (v12) {};
\node at (10.5,-0.5) {$2$};
\node [fplace] at (v13) {};
\node at (8.5,-0.5) {$3$};
\node [place] at (v14) {};
\node at (6.5,-0.5) {$4$};
\node [place] at (v15) {};
\node at (4.5,-0.5) {$5$};
\node [dxx] at (v16) {};
\node at (2.5,-0.5) {$6$};
\node [dxx] at (v17) {};
\node at (0.5,-0.5) {$7$};
\node [dxx] at (v18) {};
\node at (240:2.7) {$8$};
\node [place] at (v19) {};
\node at (120:2.7) {$9$};

\coordinate [label=center:{$G_3$}] (G1) at (6,-2);

\end{tikzpicture}

\end{center}
\caption{The set $\{1,3,6,7,8\}$ is a simultaneous adjacency basis of $\{G_1,G_2,G_3\}$,  whereas $\{1,6,7,8\}$ is a simultaneous metric basis.}
\label{ExSimultaneousAdjBasis}
\end{figure}
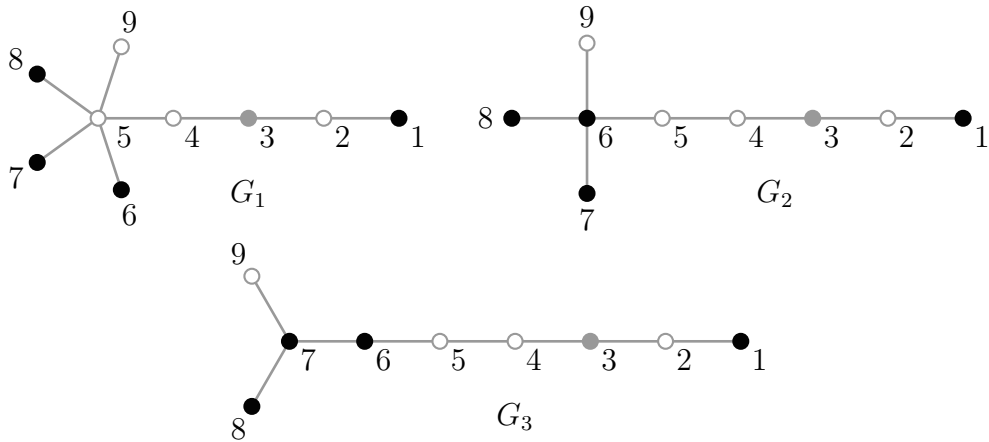

The study of simultaneous parameters in graphs was introduced by Brigham and  Dutton in \cite{Brigham1990}, where they studied simultaneous  domination. %\cite{Dankelmann2006}
This should not be  confused with studies on families sharing a constant value on a parameter, for instance the study presented in \cite{IMRAN2013}, where several graph families such that all members have a constant metric dimension are studied, enforcing no constraints regarding whether all members share a metric basis or not. In particular, the study of the simultaneous metric dimension  was introduced  in \cite{Ramirez-Cruz-Rodriguez-Velazquez_2014,Ramirez2014}, where the authors obtained sharp bounds for this invariant  for general families of graphs and gave closed formulae
or tight bounds for the simultaneous metric dimension of several specific graph families. For a given graph $G$ they described a process for obtaining a lower bound
on the maximum number of graphs in a family containing $G$ that has simultaneous metric dimension equal to $\dim(G)$. Moreover, it was shown that the problem of finding the simultaneous metric dimension of families of trees is NP-hard, even when the metric dimension of individual trees can be efficiently computed. This suggests the usefulness of finding the simultaneous  metric dimension for special classes of graphs or obtaining good bounds on this invariant.
In this paper, we obtain closed formulae for the simultaneous metric (adjacency) dimension of corona product graphs. In particular,  we show that the simultaneous adjacency dimension is an important tool for the study of the  simultaneous metric dimension of corona product graphs.

Throughout the paper, we will use the notation $K_n$, $K_{r,s}$, $C_n$, $N_n$ and $P_n$ for complete graphs, complete bipartite graphs, cycle graphs, empty graphs and path graphs of order $n$, respectively. We use the notation $u \sim v$ if $u$ and $v$ are adjacent and $G \cong H$ if $G$ and $H$ are isomorphic graphs. For a vertex $v$ of a graph $G$, $N_G(v)$ will denote the set of neighbours or \emph{open neighbourhood} of $v$ in $G$, \emph{i.e.} $N_G(v)=\{u \in V(G):\; u \sim v\}$. The \emph{closed neighbourhood}, denoted by $N_G[v]$, equals $N_G(v) \cup \{v\}$. If there is no ambiguity, we will simple write $N(v)$ or $N[v]$. Two vertices $x,y\in V(G)$  are \textit{twins} in $G$ if $N_G[x]=N_G[y]$ or $N_G(x)=N_G(y)$. If $N_G[x]=N_G[y]$, they are said to be \emph{true twins}, whereas if $N_G(x)=N_G(y)$ they are said to be \emph{false twins}. We also define $\delta(v)=|N(v)|$ as the degree of vertex $v$, as well as $\delta(G)=\min_{v \in V(G)}\{\delta(v)\}$ and $\Delta(G)=\max_{v \in V(G)}\{\delta(v)\}$. The subgraph induced by a set $S$ of vertices will be denoted  by $\langle S\rangle$, the diameter of a  graph will be denoted by $D(G)$ and the girth by $\mathtt{g}(G)$.
For the remainder of the paper, definitions will be introduced whenever a concept is needed.

\section{The simultaneous adjacency dimension: basic bounds and tools}

The following general bounds on $\Sd_A({\cal G})$ were discussed in \cite{Ramirez_Estrada_Rodriguez_2015}.

\begin{remark}{\rm \cite{Ramirez_Estrada_Rodriguez_2015}}\label{trivialBoundsSimAdjDim}
For any family ${\cal G}=\{G_1,G_2,...,G_t\}$ of  connected graphs on a common vertex set $V$, the following results hold:
\begin{enumerate}[{\rm (i)}]
\item $\Sd_A({\cal G}) \geq \underset{i\in \{1,...,k\}}{\max}\{\dim_A(G_i)\}$.
\item $\Sd_A({\cal G})\geq\Sd({\cal G})$.
\item $\Sd_A({\cal G}) \leq \vert V\vert-1$.
\end{enumerate}
\end{remark}

It was also shown in \cite{Ramirez_Estrada_Rodriguez_2015} that if ${\cal G}$ is graph family   defined on a common vertex set $V$, such that for every pair of different vertices $u,v \in V$ there exists a graph $G \in {\cal G}$ where $u$ and $v$ are twins, then  $\Sd_A({\cal G})=\vert V \vert - 1$. In particular, any family ${\cal G}$ containing a complete graph or an empty graph satisfies $\Sd_A({\cal G})=\vert V \vert - 1$. Moreover, since a graph and its complement have the same adjacency generators, we have that $\Sd_A({\cal G})=\Sd_A(\overline{\cal G})=\Sd_A({\cal G}\cup \overline{\cal G})$, where $\overline{\cal G}=\{\overline{G} :\; G \in {\cal G}\}$.

Let $G=(V,E)$ be a graph and let  $\Perm(V)$ be the set of all permutations of $V$. %  $\Perm(V)$ is known as the symmetric group of $V$. 
 Given a subset $X\subseteq V$, the stabilizer of $X$ is the set of permutations $${\cal S}(X)=\{f\in \Perm(V): f(x)=x, \; \mbox{\rm for every } x\in X\} .$$  As usual, we denote by $f(X)$ the image of a subset $X$ under $f$, \textit{i.e}., $f(X)=\{f(x):\; x\in X\}$.

Let $G=(V,E)$ be a graph and let $B\subset V$ be a nonempty set. For any  
permutation $f\in {\cal S}(B)$ of $V$ we say that a graph $G'=(V,E')$ belongs to the  family  ${\cal  G}_{B,f}(G)$ if and only if $N_{G'}(x)=f(N_G(x))$ for every $x\in B$. We define the subgraph $\langle B_G\rangle_{w}=(N_G[B],E_{w})$ of $G$, weakly induced by $B$,  where $N_G[B]=\cup_{x\in B}N_G[x]$ and $E_{w}$ is the set of all edges having at least one vertex
in $B$. It was shown in \cite{Ramirez_Estrada_Rodriguez_2015} that $\langle B_G \rangle_w \cong \langle B_{G'} \rangle_w$ for any  $f\in {\cal{S}}(B)$ and any graph  $G'\in  {\cal{G}}_{B,f}(G)$. We define the graph family ${\cal  G}_B(G)$, associated to $B$,  as 
$${\cal  G}_B(G)=\displaystyle\bigcup_{f\in {\cal S}(B)}{\cal  G}_{B,f}(G).$$

\begin{figure}[h]
\begin{center}
\begin{tikzpicture}[inner sep=0.7mm, place/.style={circle,draw=black!40,
fill=white,thick},xx/.style={circle,draw=black!99,fill=black!99,thick},
transition/.style={rectangle,draw=black!50,fill=black!20,thick},line width=1pt,scale=0.5]

\def\n{8}

\def\radius{2.65cm}
\def\lblradius{3.35cm}

%% G

\foreach \ind in {1,...,\n}\pgfmathparse{360/\n*\ind}\coordinate (b\ind) at (\pgfmathresult:\radius);

\foreach \ind in {1,...,\n}\pgfmathparse{360/\n*\ind}\coordinate (v0\ind) at (\pgfmathresult:\lblradius);

\draw[black!40] (b1) -- (b2) -- (b3) -- (b4) -- (b5) -- (b6) -- (b7) -- (b8) -- cycle;

\node [xx] at (b1) {};
\node at (v01) {$1$};
\node [place] at (b2) {};
\node at (v02) {$2$};
\node [xx] at (b3) {};
\node at (v03) {$3$};
\node [place] at (b4) {};
\node at (v04) {$4$};
\node [place] at (b5) {};
\node at (v05) {$5$};
\node [place] at (b6) {};
\node at (v06) {$6$};
\node [xx] at (b7) {};
\node at (v07) {$7$};
\node [place] at (b8) {};
\node at (v08) {$8$};

\coordinate [label=center:{$G$}] (G) at (0.1,-4.5);

%% f

\coordinate [label=center:{$f$}] (f) at (9.1,3);

\coordinate [label=center:{$1 \rightarrow 1$}] (f111) at (7.1,1);
\coordinate [label=center:{$2 \rightarrow 6$}] (f126) at (7.1,0);
\coordinate [label=center:{$3 \rightarrow 3$}] (f133) at (7.1,-1);
\coordinate [label=center:{$4 \rightarrow 8$}] (f148) at (7.1,-2);

\coordinate [label=center:{$5 \rightarrow 2$}] (f152) at (11.1,1);
\coordinate [label=center:{$6 \rightarrow 4$}] (f164) at (11.1,0);
\coordinate [label=center:{$7 \rightarrow 7$}] (f177) at (11.1,-1);
\coordinate [label=center:{$8 \rightarrow 5$}] (f185) at (11.1,-2);

%% H_1

\foreach \ind in {1,...,\n}\pgfmathparse{360/\n*\ind}\coordinate [yshift=-4.5cm] (g1\ind) at (\pgfmathresult:\radius);

\foreach \ind in {1,...,\n}\pgfmathparse{360/\n*\ind}\coordinate [yshift=-4.5cm] (v1\ind) at (\pgfmathresult:\lblradius);

\draw[black!40] (g11) -- (g12) -- (g13) -- (g14) -- (g15) -- (g16) -- (g17) -- (g18) -- cycle;
\draw[black!40] (g12) -- (g15) -- (g18);

\node [xx] at (g11) {};
\node at (v11) {$1$};
\node [place] at (g12) {};
\node at (v12) {$6$};
\node [xx] at (g13) {};
\node at (v13) {$3$};
\node [place] at (g14) {};
\node at (v14) {$8$};
\node [place] at (g15) {};
\node at (v15) {$2$};
\node [place] at (g16) {};
\node at (v16) {$4$};
\node [xx] at (g17) {};
\node at (v17) {$7$};
\node [place] at (g18) {};
\node at (v18) {$5$};

\coordinate [label=center:{$H_1$}] (H1) at (0.1,-13.5);

%% H_2

\foreach \ind in {1,...,\n}\pgfmathparse{360/\n*\ind}\coordinate [xshift=3.8cm,yshift=-4.5cm] (g2\ind) at (\pgfmathresult:\radius);

\foreach \ind in {1,...,\n}\pgfmathparse{360/\n*\ind}\coordinate [xshift=3.8cm,yshift=-4.5cm] (v2\ind) at (\pgfmathresult:\lblradius);

\draw[black!40] (g26) -- (g27) -- (g28) -- (g21) -- (g22) -- (g23) -- (g24) -- (g25);

\node [xx] at (g21) {};
\node at (v21) {$1$};
\node [place] at (g22) {};
\node at (v22) {$6$};
\node [xx] at (g23) {};
\node at (v23) {$3$};
\node [place] at (g24) {};
\node at (v24) {$8$};
\node [place] at (g25) {};
\node at (v25) {$2$};
\node [place] at (g26) {};
\node at (v26) {$4$};
\node [xx] at (g27) {};
\node at (v27) {$7$};
\node [place] at (g28) {};
\node at (v28) {$5$};

\coordinate [label=center:{$H_2$}] (H2) at (7.7,-13.5);

%% H_3

\foreach \ind in {1,...,\n}\pgfmathparse{360/\n*\ind}\coordinate [xshift=7.6cm,yshift=-4.5cm] (g3\ind) at (\pgfmathresult:\radius);

\foreach \ind in {1,...,\n}\pgfmathparse{360/\n*\ind}\coordinate [xshift=7.6cm,yshift=-4.5cm] (v3\ind) at (\pgfmathresult:\lblradius);

\draw[black!40] (g35) -- (g36) -- (g37) -- (g38) -- (g31) -- (g32) -- (g33) -- (g34);
\draw[black!40] (g36) -- (g34);
\draw[black!40] (g36) -- (g32);
\draw[black!40] (g36) -- (g38);

\node [xx] at (g31) {};
\node at (v31) {$1$};
\node [place] at (g32) {};
\node at (v32) {$6$};
\node [xx] at (g33) {};
\node at (v33) {$3$};
\node [place] at (g34) {};
\node at (v34) {$8$};
\node [place] at (g35) {};
\node at (v35) {$2$};
\node [place] at (g36) {};
\node at (v36) {$4$};
\node [xx] at (g37) {};
\node at (v37) {$7$};
\node [place] at (g38) {};
\node at (v38) {$5$};

\coordinate [label=center:{$H_3$}] (H3) at (15.3,-13.5);

%% H_4

\foreach \ind in {1,...,\n}\pgfmathparse{360/\n*\ind}\coordinate [xshift=11.4cm,yshift=-4.5cm] (g4\ind) at (\pgfmathresult:\radius);

\foreach \ind in {1,...,\n}\pgfmathparse{360/\n*\ind}\coordinate [xshift=11.4cm,yshift=-4.5cm] (v4\ind) at (\pgfmathresult:\lblradius);

\draw[black!40] (g41) -- (g42) -- (g43) -- (g44) -- (g45) -- (g46) -- (g47) -- (g48) -- cycle;
\draw[black!40] (g46) -- (g44);
\draw[black!40] (g44) -- (g48);
\draw[black!40] (g42) -- (g44);
\draw[black!40] (g42) -- (g48);

\node [xx] at (g41) {};
\node at (v41) {$1$};
\node [place] at (g42) {};
\node at (v42) {$6$};
\node [xx] at (g43) {};
\node at (v43) {$3$};
\node [place] at (g44) {};
\node at (v44) {$8$};
\node [place] at (g45) {};
\node at (v45) {$2$};
\node [place] at (g46) {};
\node at (v46) {$4$};
\node [xx] at (g47) {};
\node at (v47) {$7$};
\node [place] at (g48) {};
\node at (v48) {$5$};

\coordinate [label=center:{$H_4$}] (H4) at (22.9,-13.5);

\end{tikzpicture}

\end{center}
\caption{A subfamily ${\cal H}$ of ${\cal G}_B(C_8)$, where $B=\{1,3,7\}$. For every $H_i \in {\cal H}$, $\dim_A(H_i)=\dim_A(C_8)=3$. Moreover, $B$ is a simultaneous adjacency basis of ${\cal H}$, so $\Sd_A({\cal H})=3$.}
\label{FigSubFamilyC8}
\end{figure}
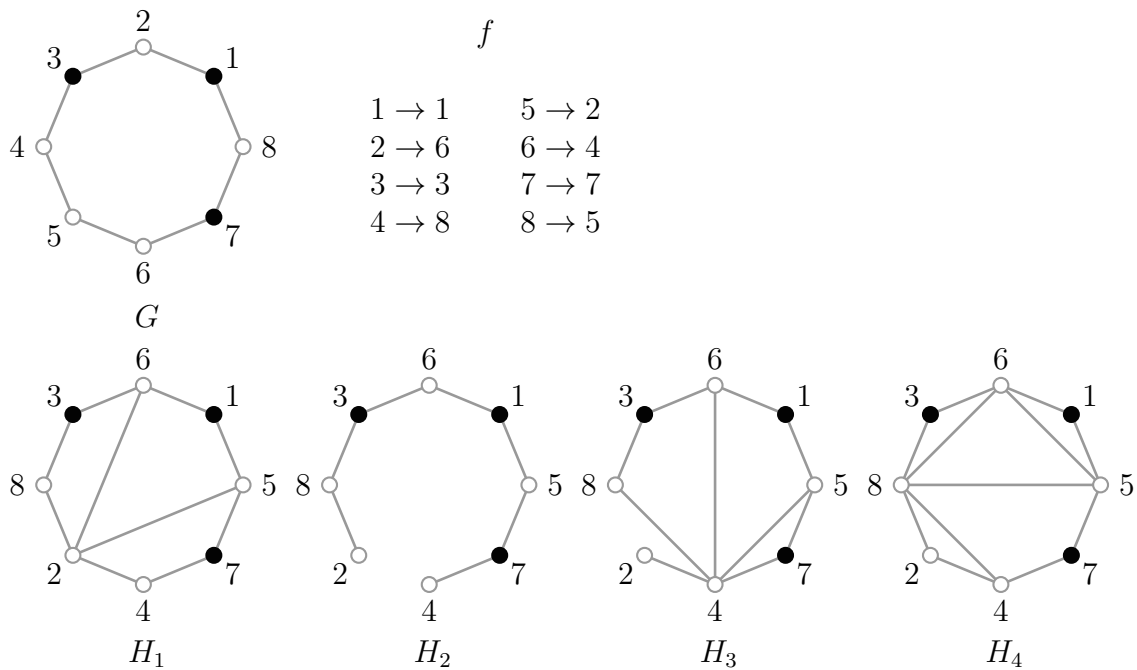

The following result shows that, given a graph $G$ and an adjacency basis $B$ of $G$, we can obtaining large families of graphs having $B$ as a simultaneous adjacency generator. 

\begin{theorem}{\rm \cite{Ramirez_Estrada_Rodriguez_2015}}\label{dimAPermFamily}
Any  adjacency basis $B$    of a graph $G$ is  a simultaneous adjacency generator for  any family of graphs ${\cal  H} \subseteq {\cal  G}_B(G)$. Moreover, if   $G\in {\cal  H}$, then
 $$\Sd_A({\cal  H})=\dim_A(G).$$
\end{theorem}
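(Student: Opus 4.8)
The plan is to reduce everything to the first assertion, since the ``moreover'' part is then immediate: once $B$ is known to be a simultaneous adjacency generator for ${\cal H}$ we get $\Sd_A({\cal H})\le |B|=\dim_A(G)$, and if $G\in{\cal H}$ then Remark~\ref{trivialBoundsSimAdjDim}(i) gives $\Sd_A({\cal H})\ge\dim_A(G)$, so the two bounds meet. Hence the whole content of the theorem lies in showing that $B$ is an adjacency generator for every $G'\in{\cal G}_B(G)$, and for that it suffices to treat a single $G'\in{\cal G}_{B,f}(G)$ for an arbitrary $f\in{\cal S}(B)$.

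The key observation I would isolate first is a translation rule. Fix $G'\in{\cal G}_{B,f}(G)$. By definition $N_{G'}(s)=f(N_G(s))$ for every $s\in B$, so for any vertex $z\in V$ we have $s\sim_{G'}z\iff z\in f(N_G(s))\iff f^{-1}(z)\in N_G(s)\iff s\sim_G f^{-1}(z)$, using that $f$ is a bijection. Consequently, for any two vertices $x,y\in V$, a vertex $s\in B$ distinguishes $x$ and $y$ in $G'$ if and only if $s$ distinguishes $f^{-1}(x)$ and $f^{-1}(y)$ in $G$.

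With this in hand, I would take two distinct vertices $x,y\in V-B$. Since $f$ fixes $B$ pointwise and is a permutation of $V$, it restricts to a permutation of $V-B$; hence $f^{-1}(x)$ and $f^{-1}(y)$ are two distinct vertices of $V-B$. As $B$ is an adjacency basis of $G$, it is in particular an adjacency generator, so some $s\in B$ distinguishes $f^{-1}(x)$ and $f^{-1}(y)$ in $G$; by the translation rule this same $s$ distinguishes $x$ and $y$ in $G'$. Therefore $B$ is an adjacency generator for $G'$, and since $G'\in{\cal G}_B(G)$ was arbitrary, $B$ is a simultaneous adjacency generator for any ${\cal H}\subseteq{\cal G}_B(G)$; combining this with the inequalities of the first paragraph finishes the proof.

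I do not expect a genuine obstacle here. The only points requiring care are the direction of the permutation — one must compose with $f^{-1}$, not $f$, because the hypothesis rewrites the neighbourhoods of the vertices of $B$ rather than those of $x$ and $y$ — and the elementary remark that $f\in{\cal S}(B)$ forces $f(V-B)=V-B$, which is precisely what keeps the pair $f^{-1}(x),f^{-1}(y)$ inside the set over which the adjacency-generator property of $B$ quantifies.
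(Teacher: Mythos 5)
Your proof is correct and takes the natural route: this paper only quotes Theorem~\ref{dimAPermFamily} from \cite{Ramirez_Estrada_Rodriguez_2015} without reproducing a proof, and your argument --- translating adjacencies through $f^{-1}$ via $N_{G'}(s)=f(N_G(s))$ for $s\in B$, observing that $f\in{\cal S}(B)$ forces $f(V-B)=V-B$ so the pair $f^{-1}(x),f^{-1}(y)$ stays in $V-B$, and then closing the ``moreover'' part by combining $\Sd_A({\cal H})\le|B|=\dim_A(G)$ with the lower bound --- is exactly the standard argument behind the cited result. One small caveat: Remark~\ref{trivialBoundsSimAdjDim}(i) is stated for families of connected graphs, whereas members of ${\cal G}_B(G)$ need not be connected, so for the lower bound it is cleaner to argue directly that when $G\in{\cal H}$ any simultaneous adjacency generator for ${\cal H}$ is in particular an adjacency generator for $G$, whence $\Sd_A({\cal H})\ge\dim_A(G)$.
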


To illustrate this, Figure~\ref{FigSubFamilyC8} shows a graph family ${\cal H}=\{H_1,H_2,H_3,H_4\} \subseteq {\cal G}_B(C_8)$, where $B=\{1,3,7\}$ and $\Sd_A({\cal  H})=\dim_A(C_8).$

\section{Results on families of corona product graphs}

Let $G$ be a graph  of order $n$ and $H$ be a graph. The \emph{corona product} of $G$ and $H$, denoted by  $G\odot H$, was defined in  \cite{Frucht1970} as the graph obtained from $G$ and $H$ by taking one copy of $G$ and $n$ copies of $H$ and joining by an edge each vertex from the $i$-th copy of $H$ with the $i$-th vertex of $G$. The reader is referred to \cite{RV-F-2013,MR3218546,Frucht1970,Yero2011,Feng2012a,
BarraganRamirez201427,Casablanca2011,Estrada-Moreno2013corona,
Furmanczyk2013,Yero2013d,Kuziak2013,Yarahmadi2012,Yero2011b,MR3267826,
MR3239626,MR3198333,MR3194036} for some known results on corona product graphs. 

In order to present our results on the simultaneous metric (adjacency) dimension of graph families composed by corona product graphs, we need to introduce some additional notation. 
For a family ${\cal G}$ of connected non-trivial graphs defined on a common vertex set $V$ and a family ${\cal H}$ of non-trivial graphs defined on a common vertex set $V'$, we define the family $${\cal G}\odot{\cal H}=\{G  \odot H :\; G\in {\cal G}\; {\rm and  }\; H\in {\cal H}\}.$$
In particular, if ${\cal G}=\{G\}$, we will use the notation $G \odot {\cal H}$, whereas if ${\cal H}=\{H\}$, we will use the notation ${\cal G} \odot H$. 

Given $G\in \mathcal{G}$ and $H\in \mathcal{H}$, we denote by  $H_i=(V_i',E_i)$  the subgraph of $G\odot H$ corresponding to the $i$-th copy of $H$. Notice that for any $i\in V$ the graph $H_i$, which is isomorphic to $H$,  does not depend on $G$. 
Hence, the graphs in  ${\cal G}\odot{\cal H}$ are defined on the vertex set $V\cup \left(\displaystyle \bigcup_{i\in V}V_i'\right)$.
Analogously, for every  $i\in V$  we define the graph family $${\cal H}_i=\{H_i=(V_i',E_i): \; H\in {\cal H}\}.$$
Also, given a set $W\subset V'$ and $i\in V$, we denote by $W_i$ the subset of $V_i'$ corresponding to $W$. To clarify this notation, Figure~\ref{figExplVVprime} shows the graph $C_4 \odot (K_1 \cup K_2)$. In the figure, $V=\{1,2,3,4\}$ and $V'=\{a,b,c\}$, whereas $V'_i=\{a_i,b_i,c_i\}$ for $i \in \{1,2,3,4\}$.

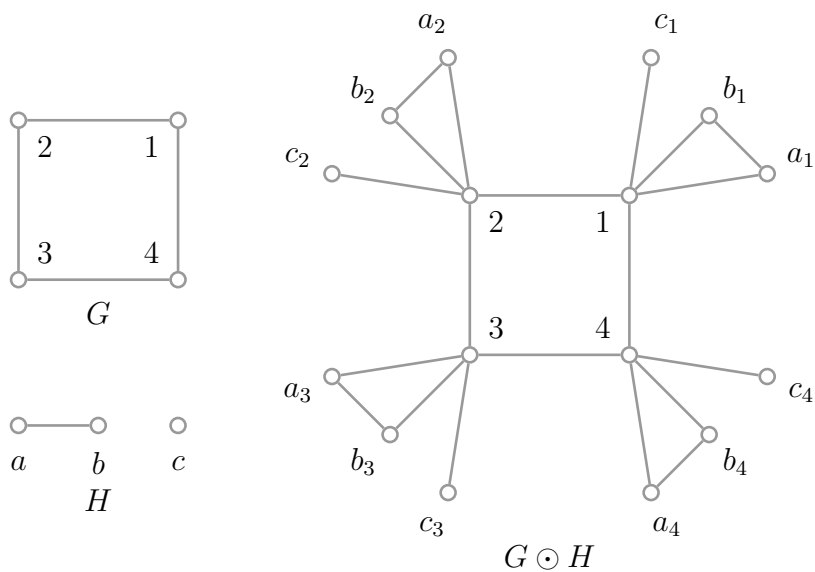
\begin{figure}[!ht]
\centering
\begin{tikzpicture}[transform shape, inner sep = .7mm,line width=1pt]
\def\radius{1.5};
\def\alfa{20};
\foreach \x in {1,...,4}
{
\pgfmathparse{int(45+90*(\x-1))};
\node [draw=black!40, shape=circle, fill=white] (\x) at (\pgfmathresult:\radius) {};
\node  at (\pgfmathresult:\radius-.5) {$\x$};
\node [draw=black!40, shape=circle, fill=white] (b\x) at (\pgfmathresult:2*\radius) {};
\node  at (\pgfmathresult:2*\radius+.5) {$b_\x$};
\pgfmathsetmacro{\r}{2*\radius/cos(\alfa)};
\pgfmathparse{int(45-\alfa+90*(\x-1))};
\node [draw=black!40, shape=circle, fill=white] (a\x) at (\pgfmathresult:\r) {};
\node at (\pgfmathresult:\r+.5) {$a_\x$};
\pgfmathparse{int(45+\alfa+90*(\x-1))};
\node [draw=black!40, shape=circle, fill=white] (c\x) at (\pgfmathresult:\r) {};
\node at (\pgfmathresult:\r+.5) {$c_\x$};
}
\draw [black!40](1) -- (2) -- (3) -- (4) -- (1);
\foreach \x in {1,...,4}
{
\draw [black!40](\x) -- (a\x);
\draw [black!40](\x) -- (b\x);
\draw [black!40](\x) -- (c\x);
\draw [black!40](a\x) -- (b\x);
}
\node at (0,-2.5*\radius cm) {$G\odot H$};
\node (centernew) at (-4*\radius cm, 1cm) {};
\foreach \x in {1,...,4}
{
\pgfmathparse{int(45+90*(\x-1))};
\node [draw=black!40, shape=circle, fill=white] (c\x) at ([shift=({\pgfmathresult:\radius})]centernew) {};
\node at ([shift=({\pgfmathresult:\radius-.5})]centernew) {$\x$};
}
\draw [black!40](c1) -- (c2) -- (c3) -- (c4) -- (c1);
\node at ([yshift=-\radius cm]centernew) {$G$};
\node [draw=black!40, shape=circle, fill=white] (b) at ([shift=({0,-2*\radius cm})]centernew) {};
\node at ([yshift=-1 cm]b) {$H$};
\pgfmathparse{\radius/sqrt(2)};
\node at ([yshift=-.5 cm]b) {$b$};
\node [draw=black!40, shape=circle, fill=white] (a) at ([shift=({-\pgfmathresult,0})]b) {};
\node at ([yshift=-.5 cm]a) {$a$};
\node [draw=black!40, shape=circle, fill=white] (c) at ([shift=({\pgfmathresult,0})]b) {};
\node at ([yshift=-.5 cm]c) {$c$};
\draw [black!40](a) -- (b);
\end{tikzpicture}
\caption{The graph $G \odot H$, where $G \cong C_4$ and $H \cong K_1 \cup K_2$.}
\label{figExplVVprime}
\end{figure}

\subsection{Simultaneous metric dimension}

We first introduce a useful relation between the metric generators of two corona product graphs with a common second factor, which allows to determine the simultaneous metric dimension of several families of corona product graphs through the study of the metric dimension of a specific corona product graph.

\begin{theorem}\label{foundMainLemmaCorona}
Let $G_1$ and $G_2$ be two connected non-trivial graphs on a common vertex set and let $H$ be a non-trivial graph. Then any metric generator for $G_1 \odot H$ is a metric generator for $G_2 \odot H$.
\end{theorem}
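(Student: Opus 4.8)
The plan is to reduce the statement to a characterization of the metric generators of a corona product graph that is insensitive to the first factor. Fix the common vertex set $V$ of $G_1$ and $G_2$, write $n=|V|$, and recall that since these graphs are connected and non-trivial we have $n\ge 2$. For a connected graph $G$ on $V$, write $H_i=(V_i',E_i)$ for the $i$-th copy of $H$ inside $G\odot H$; as noted in the text, $H_i$ depends on $H$ but not on $G$.

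First I would record the elementary distances in $G\odot H$, all of which follow because $i\in V$ is the only vertex joining the copy $H_i$ to the rest of the graph: $d_{G\odot H}(x,y)=d_G(x,y)$ for $x,y\in V$; $d_{G\odot H}(x,j)=1+d_G(i,j)$ for $x\in V_i'$ and $j\in V$ (reading $d_G(i,i)=0$); $d_{G\odot H}(x,y)=2+d_G(i,j)$ for $x\in V_i'$ and $y\in V_j'$ with $i\ne j$; and $d_{G\odot H}(x,y)=\min\{d_H(x,y),2\}$ for $x,y\in V_i'$.

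The core step is to establish the following claim (essentially contained in \cite{RV-F-2013,MR3218546}, of which only the dimension formula has been quoted so far): for a connected graph $G$ of order $n\ge 2$ and a non-trivial graph $H$, a set $S\subseteq V(G\odot H)$ is a metric generator for $G\odot H$ if and only if $S\cap V_i'$ is an adjacency generator for $H_i$ for every $i\in V$. For necessity, the distance formulas show that every vertex lying outside $V_i'$ is equidistant from any two vertices of $V_i'$; hence a pair of vertices of $V_i'\setminus(S\cap V_i')$ can be distinguished only by some $s\in S\cap V_i'$, and such an $s$ distinguishes $u,v\in V_i'$ precisely when it is adjacent to exactly one of them, which is exactly the condition that $S\cap V_i'$ be an adjacency generator for $H_i$. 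For sufficiency, assume each $S\cap V_i'$ is an adjacency generator for $H_i$; in particular each is nonempty because $H$ is non-trivial. One then checks that any two distinct vertices $x\ne y$ of $G\odot H$ are distinguished by $S$: if $x,y$ lie in a common copy, this is the hypothesis; otherwise ($x,y$ both in $V$, one of them in $V$ and the other in a copy, or $x,y$ in two distinct copies) a distinguishing vertex is obtained by taking any element of $S\cap V_j'$ for an appropriately chosen copy index $j$, using the distance formulas together with $n\ge 2$ so that a copy avoiding the ones containing $x$ and $y$ is always available when needed.

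Granting the claim, the theorem follows at once: the condition ``$S\cap V_i'$ is an adjacency generator for $H_i$ for every $i$'' refers only to the copies $H_i$, which coincide for $G_1\odot H$ and $G_2\odot H$. So if $S$ is a metric generator for $G_1\odot H$, the claim gives that $S\cap V_i'$ is an adjacency generator for $H_i$ for every $i$, and applying the claim to $G_2\odot H$ shows that $S$ is a metric generator for $G_2\odot H$. The only part needing genuine care is the sufficiency direction of the claim: the case analysis is routine, but one must make sure a distinguishing vertex can always be located in a copy avoiding the ones that contain $x$ and $y$, which is precisely where connectedness of $G$ and non-triviality of the factors enter; alternatively, this characterization can simply be cited from \cite{RV-F-2013,MR3218546}, and the proof collapses to the last paragraph.
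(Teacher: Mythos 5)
Your proposal is correct, but it follows a somewhat different route from the paper. The paper argues by direct transfer: it takes a metric generator $B$ for $G_1\odot H$ and re-verifies, case by case, that $B$ resolves all pairs of $G_2\odot H$, using only (a) the observation that two vertices of a copy $V_i'$ can be distinguished solely by vertices of that same copy, so the vertex of $B\cap V_i'$ separating them in $G_1\odot H$ also separates them in $G_2\odot H$, and (b) the consequent fact that $B\cap V_i'\neq\emptyset$ for every $i$, which settles the remaining pairs via $d_{G_2\odot H}(x,u)\le 2<3\le d_{G_2\odot H}(y,u)$ and $d_{G_2\odot H}(i,u)=1<2\le d_{G_2\odot H}(j,u)$. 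You instead isolate the full $G$-independent characterization (a set is a metric generator for $G\odot H$ if and only if its trace on each $V_i'$ is an adjacency generator for $H_i$) and apply it to both $G_1$ and $G_2$. Your route proves a slightly stronger, reusable statement -- it is precisely the lemma underlying Theorem~\ref{mainTheoremDimAdjDim}, and it makes the symmetry between $G_1$ and $G_2$ transparent -- at the cost of establishing the sufficiency direction in full, including the pair formed by $i$ and a vertex of its own copy $V_i'$, which must be separated by a vertex taken from a different copy (this is where $n\ge 2$ and the non-emptiness of every trace enter, as you correctly flag); alternatively, as you say, that characterization can simply be cited from \cite{RV-F-2013,MR3218546}. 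Incidentally, that same pair $x=i$, $y\in V_i'$ is not covered by the paper's three cases as literally stated (its case 2 assumes $i\ne j$), so your more systematic case split is, if anything, a bit more careful; otherwise both arguments rest on exactly the same distance formulas for corona products.
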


\begin{proof}
Let $V$ be the vertex set of $G_1$ and $G_2$ and let $V'$ be the vertex set of $H$. We claim that any metric generator $B$ for $G_1\odot H$   is a metric generator for $G_2\odot H$. To see this, we differentiate the following three cases for two different vertices $x,y \in V(G_2\odot H)-B$.

\begin{enumerate}
\item $x,y\in V_i'$. Since no vertex belonging to $B-V_i'$ distinguishes the pair $x,y$ in $G_1 \odot H$, there must exist $u\in V_i'\cap B$ which distinguishes them. This vertex $u$ also distinguishes $x$ and $y$ in $G_2 \odot H$.
\item Either  $x\in V_i'$ and $y\in V_j'$ or $x=i$ and $y\in V_j'$, where $i\ne j$. For these two possibilities we take $u\in B\cap V_i'$ and we conclude that $d_{G_2\odot H}(x,u)\le 2 \neq 3\le d_{G_2\odot H}(y,u)$.
\item $x=i$ and $y=j$. In this case for $u\in B\cap V_i'$   we have $d_{G_2\odot H}(x,u)=1 \neq 2\le d_{G_2\odot H}(y,u)$.
\end{enumerate}

In conclusion, $B$ is a metric generator for $G_2\odot H$.
\end{proof}

The following result is a direct consequence of Theorem~\ref{foundMainLemmaCorona}.

\begin{corollary}\label{MainLemmaCorona}
Let ${\cal G}$ be a family of connected non-trivial graphs on a common vertex set and let ${\cal H}$ be a family of non-trivial graphs on a common vertex set. Then, for any $G\in \mathcal{G}$, $$\Sd({\cal G} \odot {\cal H})=\Sd(G \odot {\cal H}).$$
\end{corollary}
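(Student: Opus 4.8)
The plan is to reduce the statement about the family $\mathcal{G}\odot\mathcal{H}$ to repeated application of Theorem~\ref{foundMainLemmaCorona}. Fix an arbitrary $G\in\mathcal{G}$; we must show that the smallest set $S$ that is simultaneously a metric generator for every $G'\odot H$ with $G'\in\mathcal{G}$ and $H\in\mathcal{H}$ has the same cardinality as the smallest set that is simultaneously a metric generator for every $G\odot H$ with $H\in\mathcal{H}$.

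First I would prove the inequality $\Sd(\mathcal{G}\odot\mathcal{H})\ge\Sd(G\odot\mathcal{H})$. This direction is immediate: $G\odot\mathcal{H}\subseteq\mathcal{G}\odot\mathcal{H}$, and the simultaneous metric dimension is monotone under taking subfamilies (any set distinguishing all pairs in every member of the larger family does so in every member of the smaller one), so a simultaneous metric basis for $\mathcal{G}\odot\mathcal{H}$ is in particular a simultaneous metric generator for $G\odot\mathcal{H}$.

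The reverse inequality $\Sd(\mathcal{G}\odot\mathcal{H})\le\Sd(G\odot\mathcal{H})$ is where Theorem~\ref{foundMainLemmaCorona} does the work. Let $B$ be a simultaneous metric basis of $G\odot\mathcal{H}$, so $|B|=\Sd(G\odot\mathcal{H})$ and $B$ is a metric generator for $G\odot H$ for every $H\in\mathcal{H}$. I claim $B$ is a simultaneous metric generator for $\mathcal{G}\odot\mathcal{H}$. Indeed, take any member $G'\odot H\in\mathcal{G}\odot\mathcal{H}$, with $G'\in\mathcal{G}$, $H\in\mathcal{H}$. Since $H\in\mathcal{H}$, the set $B$ is a metric generator for $G\odot H$; since $G$ and $G'$ are connected non-trivial graphs on the common vertex set $V$ and $H$ is non-trivial, Theorem~\ref{foundMainLemmaCorona} applies (with $G_1=G$, $G_2=G'$) and yields that $B$ is a metric generator for $G'\odot H$. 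As $G'\odot H$ was arbitrary, $B$ is a simultaneous metric generator for $\mathcal{G}\odot\mathcal{H}$, whence $\Sd(\mathcal{G}\odot\mathcal{H})\le|B|=\Sd(G\odot\mathcal{H})$.

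Combining the two inequalities gives the equality, for every $G\in\mathcal{G}$. The only point that needs a little care — and the place I would double-check — is that all hypotheses of Theorem~\ref{foundMainLemmaCorona} are genuinely met for each pair $(G,G')$ and each $H\in\mathcal{H}$: the graphs in $\mathcal{G}$ are connected and non-trivial by the definition of the family $\mathcal{G}\odot\mathcal{H}$, they share the common vertex set $V$, and the graphs in $\mathcal{H}$ are non-trivial, so there is no obstruction; the argument is otherwise a routine two-sided bound.
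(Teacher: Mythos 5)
Your proof is correct and follows exactly the intended route: the paper states this corollary as a direct consequence of Theorem~\ref{foundMainLemmaCorona}, and your two-sided argument (subfamily monotonicity for one inequality, transfer of a simultaneous metric basis of $G\odot\mathcal{H}$ to every $G'\odot H$ via the theorem for the other) is precisely that consequence spelled out. No gaps; the hypotheses of the theorem are indeed satisfied for every pair $(G,G')$ and every $H\in\mathcal{H}$ as you note.
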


The following result, obtained in \cite{RV-F-2013}, provides a strong link between the metric dimension of the corona product of two graphs and the adjacency dimension of the second graph involved in the product operation.

\begin{theorem}{\rm \cite{RV-F-2013}} \label{mainTheoremDimAdjDim}
For any connected graph $G$ of order $n\ge 2$ and any non-trivial graph $H$, $$\dim(G\odot H)=n \cdot \dim_A(H).$$
\end{theorem}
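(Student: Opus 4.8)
The statement to prove is Theorem~\ref{mainTheoremDimAdjDim}: $\dim(G \odot H) = n \cdot \dim_A(H)$ for a connected graph $G$ of order $n \geq 2$ and a non-trivial graph $H$. The plan is to establish the two inequalities separately.

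For the upper bound $\dim(G \odot H) \leq n \cdot \dim_A(H)$, I would take an adjacency basis $A$ of $H$ and let $S = \bigcup_{i \in V} A_i$ be the union of the $n$ copies of $A$, one sitting inside each copy $H_i$. This set has cardinality $n \cdot \dim_A(H)$ (it is a disjoint union). I then need to verify $S$ is a metric generator for $G \odot H$. The key observation is that any vertex $s \in A_i$ satisfies $d_{G \odot H}(s, x) = d_H(s,x)$ if $x$ lies in $V_i'$ and is at $H$-distance at most $1$ from $s$ (i.e., adjacent or equal), while $d_{G \odot H}(s,x) \geq 2$ for every $x$ outside this immediate neighbourhood; more precisely the distance from $s \in A_i$ to the vertex $i \in V$ is $1$, to any other vertex of $V_i'$ it is either $1$ or $2$ (determined by adjacency in $H$), to the vertex $j \in V$ with $j \neq i$ it is $1 + d_G(i,j)$, and to a vertex of $V_j'$ with $j \neq i$ it is $2 + d_G(i,j)$. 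Case analysis over pairs $x,y$ of distinct non-basis vertices: if both lie in the same $V_i'$, the adjacency basis property of $A_i$ within $H_i$ together with these distance formulas distinguishes them; if they lie in different copies $V_i', V_j'$, or one is a vertex of $G$, then either a copy-distance argument (using $d_G$ seen through some $s$) or the adjacency argument applies. This direction should be routine.

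The lower bound $\dim(G \odot H) \geq n \cdot \dim_A(H)$ is the substantive part and the main obstacle. The strategy is: given any metric basis $W$ of $G \odot H$, show that $W \cap V_i'$ must, in a suitable sense, "act like" an adjacency generator for the copy $H_i$, so that $|W \cap V_i'| \geq \dim_A(H)$ for every $i \in V$, and then sum over $i$. The delicate point is that vertices of $W$ lying outside $V_i'$ (in other copies, or in $V$) can also help distinguish pairs within $V_i'$ — but only in a very limited way: any vertex $w \notin V_i' \cup \{i\}$ is at the same distance from every vertex of $V_i'$ (namely $2 + d_G(i, j(w))$ or $2$ depending on where $w$ lies), and the vertex $i \in V$ itself is at distance $1$ from every vertex of $V_i'$. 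Hence no vertex outside $V_i'$ distinguishes any pair inside $V_i'$. Therefore $W \cap V_i'$ alone must distinguish all pairs of $V_i' \setminus W$, and since within the copy distances from $W \cap V_i'$ to such vertices are $\min\{d_H(\cdot,\cdot), 2\}$-valued (a vertex of $V_i'$ is at distance $1$ from $s \in W\cap V_i'$ if adjacent in $H$, else $2$ via the vertex $i$), this forces $W \cap V_i'$ to be an adjacency generator for $H_i \cong H$, giving $|W \cap V_i'| \geq \dim_A(H)$.

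One subtlety to handle carefully: the definition of adjacency generator quantifies only over pairs of vertices outside the generator, so when $W \cap V_i'$ fails to saturate $V_i'$ we get the bound directly, and the case $V_i' \subseteq W$ makes $|W \cap V_i'| = |V'| \geq \dim_A(H)$ trivially. Combining, $|W| \geq \sum_{i \in V} |W \cap V_i'| \geq n \cdot \dim_A(H)$, and with the upper bound this yields equality. I would present the distance computations in $G \odot H$ as a short preliminary list (the four cases above), since both directions rely on them, and then dispatch each inequality.
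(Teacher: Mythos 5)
Your proposal is correct and follows the standard approach for this result: the paper itself only cites Theorem~\ref{mainTheoremDimAdjDim} from \cite{RV-F-2013} without reproducing a proof, but the argument there (and the one mirrored in the paper's proof of Theorem~\ref{relSdCoronaFamSdAFam}) is exactly your decomposition -- the upper bound via the union $\bigcup_{i\in V}A_i$ of copies of an adjacency basis, and the lower bound via the observation that no vertex outside $V_i'$ distinguishes any pair inside $V_i'$, forcing $W\cap V_i'$ to be an adjacency generator for $H_i$. The only point deserving explicit care when you write it up is the pair formed by a vertex $x\in V_i'$ with $A_i\subseteq N_H(x)$ and the vertex $i$ itself, which no $s\in A_i$ separates and which must be resolved by a vertex of $A_j$ for some $j\neq i$ (this is where $n\ge 2$ is used); your ``copy-distance argument'' does cover it.
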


We now present a generalisation of Theorem~\ref{mainTheoremDimAdjDim} to deal with graph families.

\begin{theorem}\label{relSdCoronaFamSdAFam}
For any family ${\cal G}$ composed by connected non-trivial graphs on a common vertex set $V$ and any family ${\cal H}$ composed by non-trivial graphs on a common vertex set, $$\Sd({\cal G} \odot {\cal H}) = \vert V \vert \cdot \Sd_A({\cal H}).$$
\end{theorem}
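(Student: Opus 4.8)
The plan is to reduce, via Corollary~\ref{MainLemmaCorona}, to a single fixed $G\in{\cal G}$ and then prove $\Sd(G\odot{\cal H})=|V|\cdot\Sd_A({\cal H})$ by two opposite inequalities. It is convenient to first record the elementary distance identities in a corona product $G\odot H$ on vertex set $V\cup\bigcup_{i\in V}V_i'$: if $x,y$ lie in the same copy $V_i'$ then $d_{G\odot H}(x,y)=\min\{d_{H_i}(x,y),2\}$; every vertex of $V_i'$ is at distance $1$ from $i$; and if $x\in V_i'$ while $y\in V_j'$ or $y=j$ with $j\ne i$, then $d_{G\odot H}(x,y)=d_G(i,j)+2$ or $d_G(i,j)+1$, respectively. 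The key consequence I will use repeatedly is that a vertex of $G\odot H$ lying outside $V_i'$ is, up to one additive constant common to all of them, equidistant from every vertex of $V_i'$, hence cannot distinguish two vertices of $V_i'$.

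For the inequality $\Sd(G\odot{\cal H})\le|V|\cdot\Sd_A({\cal H})$, I would take a simultaneous adjacency basis $B$ of ${\cal H}$ --- nonempty, since every member of ${\cal H}$ is non-trivial and so $\Sd_A({\cal H})\ge1$ by Remark~\ref{trivialBoundsSimAdjDim}(i) --- and set $S=\bigcup_{i\in V}B_i$, where $B_i\subset V_i'$ is the copy of $B$. To see that $S$ is a simultaneous metric generator for $G\odot{\cal H}$, fix $H\in{\cal H}$ and distinct $x,y\in V(G\odot H)-S$ and argue by cases. If $x,y\in V_i'$, then $x,y\notin B_i$, and since $B$ (hence its copy $B_i$) is an adjacency generator for $H_i\cong H$, some $s\in B_i$ is adjacent in $H_i$ to exactly one of $x,y$; the distances $d_{G\odot H}(s,x),d_{G\odot H}(s,y)$ then equal $1$ and $2$ in some order, so $s$ distinguishes $x,y$. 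In every remaining case (the two vertices in different copies, a vertex of $V$ against a vertex of a copy, or two vertices of $V$), the distance identities together with $|V|\ge2$ let me select a distinguishing vertex from a suitable $B_j$. Hence $\Sd(G\odot{\cal H})\le|S|=|V|\cdot|B|=|V|\cdot\Sd_A({\cal H})$.

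For the reverse inequality, I would start from a simultaneous metric basis $W$ of $G\odot{\cal H}$ and fix $i\in V$. For any $H\in{\cal H}$ and any distinct $x,y\in V_i'-W$ --- and note $V_i'-W=V_i'-(W\cap V_i')$, since the copies and $V$ are pairwise disjoint --- the distance observation above forces any distinguishing vertex to lie in $W\cap V_i'$; for such an $s$ both $d_{G\odot H}(s,x)$ and $d_{G\odot H}(s,y)$ lie in $\{1,2\}$, so $s$ is adjacent in $H_i$ to exactly one of $x,y$. Thus $W\cap V_i'$ is an adjacency generator for $H_i$, and since $H$ was arbitrary in ${\cal H}$, it is a simultaneous adjacency generator for ${\cal H}_i$. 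As the natural bijection $V_i'\leftrightarrow V'$ is a simultaneous isomorphism identifying ${\cal H}_i$ with ${\cal H}$, we get $|W\cap V_i'|\ge\Sd_A({\cal H}_i)=\Sd_A({\cal H})$. Summing over the $|V|$ pairwise disjoint sets $V_i'$ gives $|W|\ge\sum_{i\in V}|W\cap V_i'|\ge|V|\cdot\Sd_A({\cal H})$, which with the first inequality completes the proof.

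The only genuinely delicate point is the lower bound: the careful verification that nothing outside $W\cap V_i'$ --- neither the $G$-vertex $i$ itself (at distance $1$ from all of $V_i'$), nor any other $G$-vertex, nor any vertex of another copy --- can resolve a pair inside $V_i'$. Once this is settled, resolving such pairs is literally the adjacency-resolving condition on $H_i$, and the rest is bookkeeping with the distance identities and the disjointness of the $n=|V|$ copies.
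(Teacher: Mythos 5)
Your proposal is correct and follows essentially the same route as the paper: the lower bound by intersecting a simultaneous metric basis with each copy $V_i'$ and observing that only vertices inside a copy can resolve pairs within it, and the upper bound by placing a copy of a simultaneous adjacency basis of ${\cal H}$ in every $V_i'$. The only cosmetic differences are that you reduce to a single $G$ via Corollary~\ref{MainLemmaCorona} and spell out the distance case analysis (including the case of $i$ versus a vertex of $V_i'$ dominated by $B_i$, which needs $|V|\ge 2$ and a vertex of some $B_j$, $j\ne i$) that the paper delegates to the proof of Theorem~\ref{mainTheoremDimAdjDim} ``by analogy.''
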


\begin{proof}
Throughout the proof we  consider two arbitrary graphs   $G  \in {\cal G} $ and $ H \in  {\cal H}$. Let $B$ be a simultaneous metric basis of ${\cal G} \odot {\cal H}$ and  let $B_i=B \cap V'_i$. Clearly, $B_i \cap B_j=\emptyset$ for every $i \neq j$. Since no pair of vertices $x,y \in H_i$ is distinguished by any vertex $v \in B_j$, $i \neq j$, we have that $B_i$ is an adjacency generator for $H_i$. Hence, the set $B'\subset V'$ corresponding to $B_i\subset V'_i$ is an adjacency generator for $H$ and, since  $B'$  does not depend on the election of $H$, it  is a simultaneous adjacency generator for $\mathcal{H}$ and, as a result, $$\Sd({\cal G} \odot {\cal H})=|B|\ge \displaystyle\sum_{i\in V}|B_i| =|V||B'|\geq |V|\cdot\Sd_A({\cal H}).$$

Now, let $W$ be a simultaneous adjacency basis of ${\cal H}$ and let $W_i=W \cap V'_i$. By analogy to  the proof of Theorem \ref{mainTheoremDimAdjDim} we   see that $S=\displaystyle \bigcup_{i\in V} W_i$ is a metric generator for $G \odot H$. Since  $S$ does not depend on the election of $G$ and $H$, it is a simultaneous metric generator for ${\cal G}\odot{\cal H}$ and  so $$\Sd({\cal G} \odot {\cal H}) \leq |S| = \displaystyle\sum_{i\in V} |W_i| = |V| \cdot \Sd_A({\cal H}).$$
Therefore,  the equality holds.
\end{proof}

The following result is a direct consequence of Theorems~\ref{dimAPermFamily} and~\ref{relSdCoronaFamSdAFam}.

\begin{proposition}
Let ${\cal G}$ be a family of connected non-trivial graphs on a common vertex set $V$. Let $H$ be a non-trivial graph and let $B$ be an adjacency basis of $H$. Then, for every ${\cal H} \subseteq {\cal G}_B(H)$ such that $H \in {\cal H}$, $$\Sd({\cal G}\odot{\cal H})=|V|\cdot\dim_A(H).$$
\end{proposition}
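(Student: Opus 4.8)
The plan is to obtain the statement by simply chaining the two theorems invoked just before it, namely Theorem~\ref{relSdCoronaFamSdAFam} and Theorem~\ref{dimAPermFamily}. First I would apply Theorem~\ref{relSdCoronaFamSdAFam} to the pair of families ${\cal G}$ and ${\cal H}$. The hypotheses of that theorem are met: ${\cal G}$ is assumed to be a family of connected non-trivial graphs on the common vertex set $V$, and ${\cal H}$ is a family of non-trivial graphs on a common vertex set, since every member of ${\cal G}_B(H)$ is a graph on the vertex set of $H$ (which is non-trivial by hypothesis). Hence Theorem~\ref{relSdCoronaFamSdAFam} gives directly
$$\Sd({\cal G}\odot{\cal H}) = |V|\cdot\Sd_A({\cal H}).$$

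Second, I would evaluate $\Sd_A({\cal H})$ via Theorem~\ref{dimAPermFamily}. Its hypotheses are precisely what the proposition assumes: $B$ is an adjacency basis of $H$, and ${\cal H}$ is a subfamily of ${\cal G}_B(H)$ containing $H$ itself. Consequently that theorem yields $\Sd_A({\cal H}) = \dim_A(H)$. Substituting this into the identity above gives $\Sd({\cal G}\odot{\cal H}) = |V|\cdot\dim_A(H)$, which is the desired conclusion.

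Since both cited results are already available, there is no real obstacle here; the proposition is a formal corollary. The only point that deserves a line of verification — and it is minor — is the compatibility of the respective hypotheses, specifically confirming that ${\cal H}\subseteq{\cal G}_B(H)$ indeed consists of non-trivial graphs on a fixed common vertex set, so that it is an admissible second factor for Theorem~\ref{relSdCoronaFamSdAFam} while simultaneously satisfying the conditions of Theorem~\ref{dimAPermFamily}. I would state this compatibility explicitly and then present the two-line computation above.
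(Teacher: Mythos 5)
Your proposal is correct and is exactly the paper's argument: the paper presents this proposition as a direct consequence of Theorems~\ref{dimAPermFamily} and~\ref{relSdCoronaFamSdAFam}, i.e.\ the same two-step chaining $\Sd({\cal G}\odot{\cal H})=|V|\cdot\Sd_A({\cal H})=|V|\cdot\dim_A(H)$. Your remark on checking that ${\cal H}\subseteq{\cal G}_B(H)$ consists of non-trivial graphs on a common vertex set is a sensible (if minor) addition.
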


\subsection{Simultaneous adjacency dimension}

Given a    family ${\cal G}$ of connected non-trivial graphs on a common vertex set $V$ and  a family ${\cal H}$    of non-trivial graphs on a common vertex set,  Remark~\ref{trivialBoundsSimAdjDim} and Theorem~\ref{relSdCoronaFamSdAFam} lead to 
\begin{equation}\label{TrivialLowerBound}
\Sd_A({\cal G} \odot {\cal H})\geq \Sd({\cal G} \odot {\cal H})=|V|\cdot\Sd_A({\cal H}). 
\end{equation}
Therefore, there exists an integer $f({\cal G},{\cal H})\ge 0$ such that
\begin{equation}\label{MainFormulaAdjDim}
\Sd_A({\cal G} \odot {\cal H})=|V|\cdot\Sd_A({\cal H})+f({\cal G},{\cal H}).
\end{equation} 

If ${\cal G}=\{G\}$ or ${\cal H}=\{H\}$, we will use the notations $f(G,{\cal H})$, $f({\cal G},H)$ or $f(G,H)$, as convenient. It is easy to check that for any simultaneous adjacency basis $W$ of $\mathcal{H}$ and any $i\in V$, the set $(V-\{i\})\cup \left(\displaystyle\bigcup_{j\in V}W_j\right)$ is a simultaneous adjacency generator for ${\cal G} \odot {\cal H}$, where $W_j$ is the subset of $V'_j$   corresponding to $W\subset V'$. Hence, 
 \begin{equation}\label{NaturalBoundsOnf}0\le f({\cal G},{\cal H}) \le |V|-1.
 \end{equation}

From now on, our goal is to determine the value of $f({\cal G},{\cal H})$ under different sets of conditions. We begin by pointing out a useful fact which we will use throughout the remainder of this section. Let $B$ be a simultaneous adjacency basis of ${\cal G}\odot{\cal H}$, and let $B_i=B \cap V'_i$. The following observation is a consequence of the fact that for any graph $G \odot H \in {\cal G}\odot{\cal H}$ and $i \in V$, no vertex in $B-B_i$ is able to distinguish two vertices in $V_i'$.

\begin{remark}\label{BiSimGenHi}
Let ${\cal G}$ be a family of connected non-trivial graphs on a common vertex set $V$ and let ${\cal H}$ be a family of non-trivial graphs on a common vertex set $V'$. Let $B$ be a simultaneous adjacency basis of ${\cal G}\odot{\cal H}$ and let $B_i=B\cap V'_i$ for every $i \in V$. Then, $B_i$ is a simultaneous adjacency generator for ${\cal H}_i$.
\end{remark}

Now, consider the following known result where $f(G,H)=0$.

\begin{theorem}{\rm \cite{RV-F-2013}}\label{TheOnlyPosibilitiesDimAdjCorona(a)}
Let $G$ be a  connected graph of order $n\ge 2$ and let $H$ be a non-trivial graph. 
If there exists an adjacency basis $S$ of $H$, which is also a dominating set, and if for every $v\in V(H)-S$,  it is satisfied that $S\not\subseteq N_H(v)$, then  
$$\dim_A(G\odot H)=n\cdot \dim_A(H).$$
\end{theorem}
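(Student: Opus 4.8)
The plan is to show that under the stated hypotheses the lower bound in the displayed inequality $\dim(G\odot H)\ge n\cdot\dim_A(H)$ (equivalently, via Theorem~\ref{mainTheoremDimAdjDim} and \eqref{TrivialLowerBound} specialised to singleton families, or directly by a counting argument) is tight, i.e. $f(G,H)=0$ in the adjacency-dimension setting. Since by \eqref{TrivialLowerBound} we always have $\dim_A(G\odot H)\ge n\cdot\dim_A(H)$, it suffices to exhibit an adjacency generator for $G\odot H$ of cardinality exactly $n\cdot\dim_A(H)$. First I would take the prescribed adjacency basis $S$ of $H$ and let $S_i\subset V_i'$ be the copy of $S$ in the $i$-th copy $H_i$ of $H$; set $B=\bigcup_{i\in V}S_i$, which has cardinality $n\cdot\dim_A(H)$ because the copies are vertex-disjoint. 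The whole work is then to verify that $B$ is an adjacency generator for $G\odot H$: every pair of distinct vertices of $G\odot H$ lying outside $B$ must be distinguished by adjacency to some element of $B$.

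Next I would split the verification into cases according to where the two vertices $x,y\in V(G\odot H)-B$ lie. Case (1): $x,y$ in the same copy $V_i'$. Then since $S_i$ is (the copy of) an adjacency basis of $H$ and the only neighbours the vertices of $V_i'$ have outside $V_i'$ is the single vertex $i\in V$, which is adjacent to both, some vertex of $S_i\subset B$ distinguishes $x$ and $y$. Case (2): $x\in V_i'$ and $y\in V_j'$ with $i\ne j$. Here I use that $S$ is a dominating set of $H$: the copy $S_i$ dominates $V_i'$, so either $x\in S_i$ (impossible, as $x\notin B$) or some $s\in S_i$ is adjacent to $x$; that $s$ is not adjacent to $y$ since $s\in V_i'$ and $y\in V_j'$ lie in different copies, which share no edges. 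Case (3): $x\in V_i'$ and $y=j\in V$. Split into $i=j$ and $i\ne j$. If $i\ne j$, a neighbour of $x$ in $S_i$ is not adjacent to $j$ (no edges between $V_i'$ and $j$ unless $i=j$). If $i=j$, so $x\in V_i'$ and $y=i$: here is exactly where the hypothesis "$S\not\subseteq N_H(v)$ for every $v\in V(H)-S$" is needed — translated to the copy, $x\notin S_i$ means there exists $s\in S_i$ with $s\not\sim x$, yet $s\sim i$ (every vertex of $H_i$ is joined to $i$), so $s$ distinguishes $x$ and $i$. Case (4): $x=i$ and $y=j$ in $V$ with $i\ne j$. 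Then for any $s\in S_i$ we have $s\sim i$ and $s\not\sim j$ (again, no edges from $V_i'$ to $j$), so $s$ distinguishes them. In every case $x$ and $y$ are distinguished by an element of $B$.

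Finally I would conclude that $B$ is an adjacency generator of cardinality $n\cdot\dim_A(H)$, so $\dim_A(G\odot H)\le n\cdot\dim_A(H)$, and combined with the lower bound $\dim_A(G\odot H)\ge n\cdot\dim_A(H)$ the equality follows. The main obstacle — really the only delicate point — is Case (3) with $i=j$: distinguishing a vertex $x$ of the $i$-th copy from the hub vertex $i$ itself. This is precisely the configuration that the two hypotheses on $S$ are designed to handle (domination guarantees a potential distinguisher exists within reach, and the condition $S\not\subseteq N_H(v)$ guarantees that some copy-vertex in $S_i$ fails to be adjacent to $x$ while still being adjacent to $i$). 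Everything else reduces to the structural observation that distinct copies of $H$ in $G\odot H$ are pairwise non-adjacent and that each copy is attached to $G$ only through its single hub vertex, together with the defining property of the adjacency basis $S$ inside each copy.
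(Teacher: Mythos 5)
Your proposal is correct: the lower bound via $\dim_A(G\odot H)\ge\dim(G\odot H)=n\cdot\dim_A(H)$ and the case-by-case verification that $B=\bigcup_{i\in V}S_i$ is an adjacency generator (with domination handling pairs in distinct copies and the condition $S\not\subseteq N_H(v)$ handling a copy-vertex versus its own hub) is exactly the argument behind this result, which the paper itself only cites from \cite{RV-F-2013} and reuses implicitly in the proof of Theorem~\ref{simAdjDimFamCoronaCase1}. No gaps to report.
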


As the next result shows, Theorem \ref{TheOnlyPosibilitiesDimAdjCorona(a)}  can be generalised to the case of families of the form ${\cal G} \odot {\cal H}$. To that end, recall the notion of simultaneous domination which, as we mentioned previously, was introduced in \cite{Brigham1990}. On a graph family ${\cal G}$, defined on a common vertex set $V$, a set $M \subseteq V$ is a \emph{simultaneous dominating set} if it is a dominating set of every graph $G \in {\cal G}$.
 
\begin{theorem}\label{simAdjDimFamCoronaCase1}
Let ${\cal G}$ be a family of connected non-trivial graphs on a common vertex set $V$ and let ${\cal H}$ be a family of non-trivial graphs on a common vertex set $V'$. If there exists a simultaneous adjacency basis $B$ of ${\cal H}$ which is also a simultaneous dominating set and satisfies $B \nsubseteq N_H(v)$ for every $H \in {\cal H}$ and every $v \in V'$, then $$\Sd_A({\cal G} \odot {\cal H})=|V|\cdot\Sd_A({\cal H}).$$
\end{theorem}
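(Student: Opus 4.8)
The plan is to prove the two inequalities separately. The lower bound $\Sd_A({\cal G}\odot{\cal H})\ge |V|\cdot\Sd_A({\cal H})$ is already recorded as~(\ref{TrivialLowerBound}), so the whole task reduces to producing a simultaneous adjacency generator for ${\cal G}\odot{\cal H}$ of cardinality $|V|\cdot\Sd_A({\cal H})$. The natural candidate, mimicking the construction behind Theorems~\ref{mainTheoremDimAdjDim} and~\ref{TheOnlyPosibilitiesDimAdjCorona(a)}, is $S=\bigcup_{i\in V}B_i$, where $B\subseteq V'$ is the hypothesised simultaneous adjacency basis of ${\cal H}$ and $B_i\subseteq V_i'$ is its copy in the $i$-th copy of $H$. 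Since the sets $V_i'$ are pairwise disjoint, $|S|=|V|\cdot|B|=|V|\cdot\Sd_A({\cal H})$; and, crucially, $S$ depends on neither the choice of $G\in{\cal G}$ nor that of $H\in{\cal H}$, nor do the three hypotheses imposed on $B$. Hence it suffices to fix arbitrary $G\in{\cal G}$ and $H\in{\cal H}$ and check that $S$ is an adjacency generator for $G\odot H$; it is then automatically a simultaneous adjacency generator for ${\cal G}\odot{\cal H}$, which yields $\Sd_A({\cal G}\odot{\cal H})\le |S|$ and closes the argument.

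For the verification I would take two distinct vertices $x,y\in V(G\odot H)-S$ and exhibit $s\in S$ adjacent in $G\odot H$ to exactly one of them, distinguishing cases according to where $x$ and $y$ sit. The one computation used repeatedly is that a vertex of $V_i'$ has, in $G\odot H$, exactly its $H_i$-neighbours together with the single vertex $i\in V$ as neighbours. The cases are: (a) $x,y\in V_i'$ for the same $i$; (b) $x\in V_i'$, $y\in V_j'$ with $i\ne j$; (c) $x=i\in V$ and $y\in V_j'$, split into $i=j$ and $i\ne j$; (d) $x=i$, $y=j$, both in $V$ with $i\ne j$; all remaining configurations being symmetric to these.

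Matching each hypothesis to the case where it is needed is the heart of the proof. In (a), Remark~\ref{BiSimGenHi} (or directly the fact that $B$ is a simultaneous adjacency generator for ${\cal H}$) gives $s\in B_i$ adjacent to exactly one of $x,y$ in $H_i$, and since adjacency among vertices of $V_i'$ coincides in $H_i$ and in $G\odot H$, this $s$ works. In (b) and in the $i\ne j$ part of (c), simultaneous domination is used: since $x\notin B_i$ (resp.\ $y\notin B_j$), it has a neighbour $s\in B_i$ in $H_i$ (resp.\ $s\in B_j$ in $H_j$), and this $s$ is not adjacent to the vertex lying in a different copy nor to any vertex of $V$ outside that copy. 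In the $i=j$ part of (c), the hypothesis $B\nsubseteq N_H(v)$ (with $v\in V'$ the vertex corresponding to $y\in V_i'$) furnishes $s\in B_i$ with $s\not\sim y$ in $H_i$, while $s\sim i$ because every vertex of the $i$-th copy is joined to $i$; one notes $s\ne y$ since $y\notin B_i$. In (d), any $s\in B_i$ — nonempty because $H$ is non-trivial, so $\Sd_A({\cal H})\ge1$ — is adjacent to $i$ but not to $j$.

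The argument is routine in each individual case; the only genuine subtlety, and the point I would emphasise, is the uniformity over the family: one must check that $S$, the basis $B$, the domination condition and the non-containment condition are all phrased without reference to a particular $G$ or $H$, so that the single verification above suffices for every member of ${\cal G}\odot{\cal H}$. Everything else is the same bookkeeping with the corona structure that underlies Theorems~\ref{mainTheoremDimAdjDim} and~\ref{TheOnlyPosibilitiesDimAdjCorona(a)}.
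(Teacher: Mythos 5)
Your proposal is correct and follows essentially the same route as the paper: the lower bound from~(\ref{TrivialLowerBound}) plus the generator $S=\bigcup_{i\in V}B_i$, which the paper declares to be an adjacency generator for each $G\odot H$ ``by analogy to the proof of Theorem~\ref{TheOnlyPosibilitiesDimAdjCorona(a)}'' and, being independent of $G$ and $H$, a simultaneous one. The only difference is that you write out explicitly the case analysis the paper delegates to that cited proof, and your matching of each hypothesis (basis, domination, non-containment) to the corresponding case is accurate.
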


\begin{proof}
By (\ref{TrivialLowerBound}) we only need to show that $\Sd_A({\cal G} \odot {\cal H})\leq |V|\cdot\Sd_A({\cal H})$. To this end, assume that  $B$ is  a simultaneous adjacency basis of ${\cal H}$ which is a simultaneous dominating set of ${\cal H}$ and satisfies $B \nsubseteq N_H(v)$ for every $H \in {\cal H}$ and every $v \in V'$. Consider an arbitrary graph $G \odot H \in {\cal G}\odot{\cal H}$ and let $B_i=B \cap V'_i$, for every $i\in V$. By analogy to  the proof of Theorem \ref{TheOnlyPosibilitiesDimAdjCorona(a)}   we see that  $S=\displaystyle{\bigcup_{i\in V}}B_i$ is an adjacency generator for $G \odot H$ and, since $S$ does not depend on the election of $G$ and $H$,  it is a simultaneous adjacency generator for ${\cal G} \odot {\cal H}$. Thus, $\Sd_A({\cal G} \odot {\cal H}) \leq |S|= |V|\cdot\Sd_A({\cal H})$, and the equality holds.
\end{proof}

In order to analyse special cases of Theorem~\ref{simAdjDimFamCoronaCase1}, we introduce the following auxiliary results.

\begin{lemma}{\rm \cite{Ramirez_Estrada_Rodriguez_2015}}\label{LemmaDiameter>6orPathorCycle}
Let $G$ be a connected graph. If $D(G)\ge 6$ or $G\in \{P_n,C_n\}$ for $n\ge 7$, or $G$ is a graph of girth $\mathtt{g}(G)\ge 5$ and minimum degree $\delta(G)\ge 3$ then for every adjacency generator $B$ for $G$ and every $v\in V(G)$, $B\not \subseteq N_G(v).$
\end{lemma}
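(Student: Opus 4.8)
The plan is to argue by contradiction: suppose $B$ is an adjacency generator for $G$ and $v\in V(G)$ with $B\subseteq N_G(v)$. Then every vertex of $B$ is at distance exactly $1$ from $v$, so $d_{G,2}(b,v)=1$ for all $b\in B$. Since $B$ is an adjacency generator, it must distinguish $v$ from every other vertex, which forces every $w\ne v$ to have some $b\in B$ with $d_{G,2}(b,w)\ne 1$, i.e. either $b=w$ or $b\not\sim w$. In particular $v$ is the unique vertex all of whose neighbours... more precisely, if $w\notin B$ and $N_G(w)\supseteq B$, then $w$ and $v$ are not distinguished by $B$ (both see every element of $B$ at adjacency-distance $1$), a contradiction. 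So the real content is: in each of the listed graph classes, having $B\subseteq N_G(v)$ for an adjacency generator $B$ is impossible because one can always locate a second vertex $w\ne v$ that also "dominates" $B$ in the adjacency sense, or because $B$ is simply too small/too clustered to resolve the vertices near $v$.

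I would handle the cases separately. First, the diameter case $D(G)\ge 6$: pick vertices $p,q$ with $d_G(p,q)=D(G)\ge 6$. Since $B\subseteq N_G(v)$, every element of $B$ lies in the ball of radius $1$ around $v$, hence every two vertices whose $d_{G,2}$-distances to all of $B$ agree cannot be separated; but a ball of radius $1$ around a single vertex cannot adjacency-resolve a graph containing a geodesic of length $6$ — concretely, two vertices $x,y$ at distance $\ge 3$ from $v$ along such a geodesic satisfy $d_{G,2}(b,x)=d_{G,2}(b,y)=2$ for every $b\in N_G(v)$, so they are not distinguished. This already contradicts $B$ being an adjacency generator, so no such $v$ exists. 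The cases $G\in\{P_n,C_n\}$ for $n\ge 7$ are then immediate, since $D(P_n)=n-1\ge 6$ and $D(C_n)=\lfloor n/2\rfloor\ge 3$; for $C_7, C_8$ ($D=3$) one checks directly (or notes $\dim_A(C_n)$ is known and the two or three chosen generator vertices cannot all lie in one closed neighbourhood while still resolving the cycle). For the girth-and-degree case $\mathtt{g}(G)\ge 5$, $\delta(G)\ge 3$: if $B\subseteq N_G(v)$, take any $u\in N_G(v)$; since $\delta(u)\ge 3$ and $\mathtt{g}(G)\ge 5$, $u$ has at least two neighbours outside $N_G[v]$, and these two neighbours $x,y$ are at distance $2$ from $v$, share no common neighbour with each other by the girth condition, and satisfy $d_{G,2}(b,x)=d_{G,2}(b,y)=2$ for every $b\in N_G(v)$ (the girth $\ge 5$ prevents $b$ from being adjacent to either, and $b\ne x,y$ since $x,y\notin N_G(v)$), so $B$ fails to distinguish $x$ and $y$ — contradiction.

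The main obstacle I anticipate is the small cases of the cycle, $C_7$ and $C_8$, where $D(C_n)=3$ is not $\ge 6$, so the diameter argument does not apply directly and one must invoke the explicit structure of adjacency bases of short cycles (using $\dim_A(C_n)=\lfloor(2n+2)/5\rfloor$ or an ad hoc check) to verify that no adjacency generator can be contained in a single open neighbourhood of size $2$. The girth case also requires a little care to ensure the two "far" neighbours $x,y$ are genuinely undistinguished by *every* vertex of $N_G[v]$, which is exactly where $\mathtt{g}(G)\ge 5$ (no $4$-cycles, no triangles) is used: it guarantees that no $b\in N_G(v)$ is adjacent to $x$ or to $y$, and that $x\ne y$. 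Once these local configurations are pinned down, the contradiction with $B$ being an adjacency generator is immediate in every case.
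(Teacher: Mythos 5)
This lemma is only quoted in the paper (it is cited from \cite{Ramirez_Estrada_Rodriguez_2015}), so there is no internal proof to compare against; judging your sketch on its own merits, the core idea is sound: if $B\subseteq N_G(v)$, then any two distinct vertices $x,y$ at distance at least $3$ from $v$ satisfy $d_{G,2}(b,x)=d_{G,2}(b,y)=2$ for every $b\in N_G(v)$, so $B$ cannot be an adjacency generator, and for $D(G)\ge 6$ such a pair indeed exists (this needs a one-line justification via the eccentricity of $v$: $\text{ecc}(v)\ge 3$, and either some vertex is at distance $\ge 4$, giving two vertices at distance $\ge 3$ on a geodesic from $v$, or two diametral endpoints are both at distance exactly $3$ from $v$). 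The genuine gap is in the cycle case: you assert that only $C_7$ and $C_8$ escape the diameter argument, but $D(C_n)=\lfloor n/2\rfloor<6$ for all $7\le n\le 11$, so $C_9$, $C_{10}$ and $C_{11}$ are covered by neither your diameter argument nor your (unexecuted) ``check directly'' step; as written, these cases are simply missing. The clean repair makes the ad hoc checks unnecessary: in $C_n$ with $n\ge 7$ the set of vertices at distance at most $2$ from $v$ has exactly $5$ elements, so there are at least two vertices at distance $\ge 3$ from $v$, and the same indistinguishability argument applies uniformly to all $n\ge 7$, with no appeal to $\dim_A(C_n)$.

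There is also a small slip in the girth case: you claim every $b\in N_G(v)$ is non-adjacent to both $x$ and $y$, but this fails for $b=u$, since $x,y$ were chosen as neighbours of $u$ (and $x,y$ do share the common neighbour $u$, contrary to your parenthetical remark). The conclusion survives because $u$ is adjacent to \emph{both} $x$ and $y$, hence distinguishes neither, while any $b\in N_G(v)\setminus\{u\}$ adjacent to $x$ or $y$ would create a $3$- or $4$-cycle, contradicting $\mathtt{g}(G)\ge 5$; so no element of $B\subseteq N_G(v)$ is adjacent to exactly one of $x,y$. State it this way and that case is correct (note that $\delta(G)\ge 3$ is only needed at $u$ to provide two neighbours outside $N_G[v]$, all of $u$'s neighbours other than $v$ being outside $N_G[v]$ by triangle-freeness).
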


\begin{lemma}{\rm \cite{Ramirez_Estrada_Rodriguez_2015}}\label{lemmaExistAdjBDominatingCyclePath}
Let $P_n$ and $C_n$ be a path and a cycle graph of order $n \ge 7$. If $n \not\equiv 1 \mod 5$ and $n \not\equiv 3 \mod 5$, then there exist adjacency bases of $P_n$ and $C_n$ that are dominating sets.
\end{lemma}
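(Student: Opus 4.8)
The plan is to exhibit one explicit vertex set that is simultaneously an adjacency basis and a dominating set of $P_n$ and of $C_n$; in fact a single construction will serve all three admissible residues $n\equiv 0,2,4\pmod 5$. Write $V(P_n)=V(C_n)=\{v_1,\dots,v_n\}$ with consecutive indices adjacent (cyclically, in $C_n$), and put
$$S=\bigl\{v_i\ :\ 1\le i\le n,\ i\equiv 2\ \text{or}\ i\equiv 4\pmod 5\bigr\}.$$
The first step is a short count: when $n\equiv 0,2,4\pmod 5$ one gets $|S|=\bigl\lfloor\frac{2n+2}{5}\bigr\rfloor$ exactly, which by the known values $\dim_A(P_n)=\dim_A(C_n)=\bigl\lfloor\frac{2n+2}{5}\bigr\rfloor$ (for $n\ge 7$) is the size of an adjacency basis. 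Hence it suffices to prove that $S$ is an adjacency generator and a dominating set of $P_n$ and of $C_n$.

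For the adjacency part I would use the reformulation that, since $s\in S$ distinguishes $x,y\in V\setminus S$ precisely when $s$ is adjacent to exactly one of them, $S$ is an adjacency generator if and only if the map $x\mapsto N(x)\cap S$ is injective on $V\setminus S$. Now $S$ is $5$-periodic away from the ends, so the vertices of $V\setminus S$ split into consecutive blocks $\{v_{5j+1},v_{5j+3},v_{5j+5}\}$ whose $S$-neighbourhoods are $\{v_{5j+2}\}$, $\{v_{5j+2},v_{5j+4}\}$ and $\{v_{5j+4}\}$, three distinct nonempty subsets of the pair $\{v_{5j+2},v_{5j+4}\}$; since these pairs are disjoint for different $j$, vertices in different blocks never share an $S$-neighbourhood. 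What remains is the boundary: in a path this affects at most $v_1$ and, according to $n\bmod 5$, the last one or two vertices; in a cycle it affects only $v_1$, whose $S$-neighbourhood additionally contains $v_n$ when $n\equiv 2,4\pmod 5$. In each of these situations one checks by hand that the resulting one- or two-element sets stay pairwise distinct and distinct from all interior block-neighbourhoods, which reduces to trivialities such as $\{v_2,v_n\}$ not having the form $\{v_{5j+2},v_{5j+4}\}$.

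Finally, for domination one observes that every $v_i$ with $i\equiv 1\pmod 5$ has the neighbour $v_{i+1}\in S$, every $v_i$ with $i\equiv 3\pmod 5$ has the neighbour $v_{i-1}\in S$, and every $v_i$ with $i\equiv 0\pmod 5$ has the neighbour $v_{i-1}\in S$; the only way this could fail is at a path end or cyclic seam, and precisely when $n\equiv 1\pmod 5$, which is excluded. The main obstacle is therefore purely the bookkeeping: one must run through $n\equiv 0,2,4\pmod 5$ for both $P_n$ and $C_n$ and, in each case, verify all of (i) $|S|=\lfloor(2n+2)/5\rfloor$, (ii) injectivity of the $S$-neighbourhood map including the end blocks, and (iii) domination including the end blocks, together with a check of the smallest instances $n\in\{7,9,10,\dots\}$ so that the ``interior plus boundary'' decomposition is legitimate. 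As a consistency check, this same $S$ breaks for the two excluded residues: for $n\equiv 1\pmod 5$ it fails to dominate (its last vertex is uncovered), and for $n\equiv 3\pmod 5$ it fails to resolve (the last two vertices of $V\setminus S$ share an $S$-neighbourhood), which matches the hypothesis of the lemma.
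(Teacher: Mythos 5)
Your construction is correct, but note that this lemma is not proved in the paper at all: it is quoted from \cite{Ramirez_Estrada_Rodriguez_2015}, so there is no in-paper argument to compare against (the closest relative here is Lemma~\ref{lemmaNonExistAdjBDominatingCyclePath}, which handles the complementary residues by a gap-counting argument rather than an explicit construction). Your route is a self-contained existence proof by exhibiting the periodic set $S=\{v_i: i\equiv 2 \text{ or } 4 \pmod 5\}$, and the details you defer to ``bookkeeping'' do go through: the cardinality count gives $|S|=\lfloor (2n+2)/5\rfloor$ exactly for $n\equiv 0,2,4\pmod 5$, matching $\dim_A(P_n)=\dim_A(C_n)$; the $S$-traces of the non-basis vertices are the singletons $\{v_{5j+2}\}$, $\{v_{5j+4}\}$ and the pairs $\{v_{5j+2},v_{5j+4}\}$, which are pairwise distinct, and the only seam perturbation in $C_n$ (the trace of $v_1$ becoming $\{v_2,v_n\}$ when $n\equiv 2,4\pmod 5$) cannot collide with a block pair since for $n\equiv 2$ both its elements are $\equiv 2\pmod 5$ and for $n\equiv 4$ a collision would force $n=4<7$; nonemptiness of all traces is exactly the domination claim, which holds because the excluded residue $n\equiv 1\pmod 5$ is the only one leaving an uncovered endpoint. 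So the proposal is a valid constructive proof of the cited lemma; what it buys over simply invoking the reference is an explicit basis (useful, e.g., when one wants a concrete simultaneous adjacency basis in Proposition~\ref{simAdjDimFamCoronaCase1GenPartCase1}), while the paper's choice to cite the result keeps the exposition focused on the corona-product machinery. One presentational caution: state explicitly that you are verifying adjacency resolution only for pairs in $V\setminus S$ (vertices of $S$ need no distinguishing), and include the small-case check for $n=7,9,10$ as you propose, since your block decomposition implicitly assumes at least two full periods.
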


\begin{lemma}{\rm \cite{JanOmo2012}}
\label{LemmaAdimPathCycle}
For any integer $n\ge 4$,  
 $$\dim_A(C_n)=\dim_A( P_n)= \left\lfloor\frac{2n+2}{5}\right\rfloor.$$
\end{lemma}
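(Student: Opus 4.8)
The plan is to treat $P_n$ and $C_n$ together by first turning the computation of $\dim_A$ into a combinatorial problem about placing ``landmarks'' along a line (for $P_n$) or around a circle (for $C_n$). Since $\Delta(G)\le 2$ and, by definition, an adjacency generator only has to separate pairs of vertices outside it, a set $S\subseteq V$ is an adjacency generator for $G\in\{P_n,C_n\}$ if and only if the map $v\mapsto N_G(v)\cap S$ is injective on $V\setminus S$; moreover every value of this map is a subset of $S$ of size at most $2$ (a value of size $2$ being realised by a vertex lying between two landmarks at distance $2$ along $G$). For the upper bound I would exhibit an explicit $S$ of size $\lfloor\frac{2n+2}{5}\rfloor$ whose backbone is the period-$5$ pattern placing landmarks on the two ``interior even'' positions of each block of five consecutive vertices (schematically $\circ\bullet\circ\bullet\circ$ repeated), so that consecutive landmark-to-landmark distances cycle through $2,3,2,3,\dots$: between two landmarks at distance $2$ the single gap vertex gets the two-element signature $\{s,s'\}$, between two landmarks at distance $3$ the two gap vertices get the distinct one-element signatures determined by the landmark each touches, and no gap vertex is left with the empty signature, so all signatures are visibly distinct. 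For $C_n$ one adjusts this pattern at the ``seam'' and for $P_n$ at the two ends, inserting between $0$ and $2$ extra landmarks according to $n\bmod 5$ and, if necessary, allowing at most one non-landmark to acquire the empty signature; in each residue class this yields an adjacency generator of the claimed size, so $\dim_A(G)\le\lfloor\frac{2n+2}{5}\rfloor$.

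For the lower bound, let $S$ be any adjacency generator of $G$ with $|S|=k$, and consider the maximal runs of consecutive non-landmarks and the resulting landmark-to-landmark distances $d_1,d_2,\dots$ (taken cyclically for $C_n$, and flanked by the two end-runs for $P_n$). Two local facts drive the argument. First, inside a run of $\ell$ non-landmarks between two landmarks the $\ell-2$ ``interior'' vertices (and $\ell-1$ of them in a terminal run of $P_n$) have both neighbours outside $S$ and hence the empty signature; since two non-landmarks with empty signature cannot be separated, at most one run is ``long'', forcing $d_j\le 4$ for all $j$ with at most one equal to $4$, and every terminal run of $P_n$ to have length $\le 2$. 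Second, if some landmark $s$ had runs of length $\ge 2$ on both sides, then the neighbour of $s$ inside each side would have signature exactly $\{s\}$, a contradiction; hence no two consecutive distances $d_j,d_{j+1}$ are both $\ge 3$, and on $P_n$ a nonempty terminal run forces the adjacent $d$ to be $\le 2$. Since $\sum d_j$ equals $n$ in the cyclic case (respectively $n-1$ minus the two end-runs in the path case) and the distances that are $\ge 3$ form an independent set among the $\le k$ gaps, bounding each such ``large'' distance by $3$ (with one allowed to be $4$) and every other distance by $2$ yields $n\le\lfloor\frac{5k}{2}\rfloor+1$, i.e. $2n\le 5k+2$, i.e. $k\ge\lceil\frac{2n-2}{5}\rceil=\lfloor\frac{2n+2}{5}\rfloor$. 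Combined with the upper bound, this gives the stated equalities.

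The step I expect to be the real obstacle is the lower-bound bookkeeping: one must combine the ``at most one empty signature'' and ``no landmark used twice'' constraints without double counting, keep track of the parity of $k$ in the independent-set estimate for the large gaps, and --- the genuinely fiddly part --- fold the two terminal runs of $P_n$ into the count so that they cost no more than in the cyclic argument (each terminal run has length $\le 2$ and, when nonempty, pins the adjacent distance to $\le 2$, which is exactly the room the estimate needs). One should also check that no other signature clash has been overlooked: two distinct two-element signatures are automatically different, and a one-element signature arising from a terminal run cannot equal one arising from the adjacent internal run, again by the second local fact.
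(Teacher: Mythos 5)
The paper never proves this lemma: it is imported verbatim from Jannesari and Omoomi \cite{JanOmo2012} and used as a black box (e.g.\ as input to Proposition~\ref{simAdjDimFamCoronaCase1GenPartCase1} and to the gap analysis in the proof of Lemma~\ref{lemmaNonExistAdjBDominatingCyclePath}), so any proof you supply is by definition a different route. Your route is the natural one and, in outline, it is correct: encoding an adjacency generator $S$ by the signatures $N(v)\cap S$ of the non-landmarks reduces everything to gap bookkeeping, and your lower-bound constraints are exactly right (no gap of length $\ge 4$, at most one gap of length $3$ since only one empty signature is allowed, no two gaps of length $\ge 2$ sharing a landmark, terminal runs of $P_n$ of length at most $2$ which force the adjacent internal gap to length at most $1$); these do sum to $n\le\lfloor 5k/2\rfloor+1$, hence $2n\le 5k+2$ and $k\ge\lceil(2n-2)/5\rceil=\lfloor(2n+2)/5\rfloor$, and the path-end bookkeeping you worry about does close, because the shared one-empty-signature budget and the pinned end gaps compensate for the two terminal runs. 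Interestingly, this is the same style of $1$-, $2$- and $3$-gap counting the paper itself uses in Lemma~\ref{lemmaNonExistAdjBDominatingCyclePath}, only there the value of $\dim_A(C_n)$ is taken as known, whereas you push the argument one step further to derive the value itself; what your self-contained proof buys is independence from \cite{JanOmo2012}, at the cost of the case analysis below.

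Two places still need to be written out rather than asserted. First, the upper bound: you must exhibit and verify the generator in each residue class of $n$ modulo $5$, for the path and for the cycle separately (this is where the exact value $\lfloor(2n+2)/5\rfloor$ lives); your ``insert $0$ to $2$ extra landmarks at the seam/ends'' description is plausible and does work, but it is not yet a construction. Second, your closing claim that ``two distinct two-element signatures are automatically different'' is too quick: two \emph{different} gap vertices receive the \emph{same} two-element signature whenever the same pair of landmarks bounds two distinct length-$1$ gaps, which happens precisely when $k=2$ on a cycle (antipodal landmarks on $C_4$ fail, and the fix is to take the two landmarks adjacent, i.e.\ allow a gap of length $0$). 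Relatedly, the small cases $n=4,5,6$ are covered by the statement ($n\ge 4$) but not by the period-$5$ backbone, so they need a short separate check. None of this is a conceptual gap--the strategy is sound--but as written the proof is a sketch at exactly the points you flag.
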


\begin{proposition}\label{simAdjDimFamCoronaCase1GenPartCase1}
Let ${\cal G}$ be a family of connected non-trivial graphs on a common vertex set $V$. Let $P_n$ be a path graph of order $n \ge 7$ such that $n\not \equiv 1 \bmod 5$ and  $n\not\equiv 3 \bmod 5$, and let $C_n$ be the cycle graph obtained from $P_n$ by joining its leaves by an edge. Let $B$ be an adjacency basis of $P_n$ and $C_n$ which is also a dominating set of both. Then, for every ${\cal H}\subseteq {\cal G}_B(P_n) \cup {\cal G}_B(C_n)$ such that $P_n \in {\cal H}$ or $C_n \in {\cal H}$, $$\Sd_A({\cal G} \odot {\cal H})=|V|\cdot\left\lfloor\frac{2n+2}{5}\right\rfloor.$$
\end{proposition}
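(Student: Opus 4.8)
The plan is to deduce Proposition~\ref{simAdjDimFamCoronaCase1GenPartCase1} by checking that the hypotheses of Theorem~\ref{simAdjDimFamCoronaCase1} are met for the family $\mathcal{H}$, and then computing the resulting value $|V|\cdot\Sd_A(\mathcal{H})$ explicitly. So the proof splits naturally into three tasks: (1) show that $B$ is a simultaneous adjacency basis of $\mathcal{H}$; (2) show that $B$ is a simultaneous dominating set of $\mathcal{H}$ and that $B\nsubseteq N_H(v)$ for every $H\in\mathcal{H}$ and every $v\in V'$; (3) evaluate $\Sd_A(\mathcal{H})$.

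For task (1), the hypothesis $\mathcal{H}\subseteq\mathcal{G}_B(P_n)\cup\mathcal{G}_B(C_n)$ together with $P_n\in\mathcal{H}$ or $C_n\in\mathcal{H}$ is precisely the setup of Theorem~\ref{dimAPermFamily}. First I would observe that, since $B$ is an adjacency basis of both $P_n$ and $C_n$, Theorem~\ref{dimAPermFamily} applied to whichever of $P_n$, $C_n$ lies in $\mathcal{H}$ gives that $B$ is a simultaneous adjacency generator for $\mathcal{H}$ and that $\Sd_A(\mathcal{H})=\dim_A(P_n)$ (equivalently $\dim_A(C_n)$, which equals $\dim_A(P_n)$ by Lemma~\ref{LemmaAdimPathCycle}). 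Strictly speaking one must note that the members of $\mathcal{H}$ coming from $\mathcal{G}_B(P_n)$ and those coming from $\mathcal{G}_B(C_n)$ can be handled uniformly because $B$ is an adjacency basis of each of the two ``seed'' graphs, so $B$ is a simultaneous adjacency generator for $\mathcal{G}_B(P_n)\cup\mathcal{G}_B(C_n)$ and hence for $\mathcal{H}$; this takes care of (1) and also of the value $\Sd_A(\mathcal{H})=\lfloor (2n+2)/5\rfloor$ via Lemma~\ref{LemmaAdimPathCycle}, settling task (3).

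For task (2), the point is that the structural invariants $\langle B_G\rangle_w\cong\langle B_{G'}\rangle_w$ preserved across $\mathcal{G}_B(G)$ (quoted in the excerpt) are exactly what is needed. Since $B$ is a dominating set of $P_n$ and of $C_n$, for every $H\in\mathcal{H}$ and every $x\in B$ the neighbourhood $N_H(x)=f(N_{P_n}(x))$ or $f(N_{C_n}(x))$ for some $f\in\mathcal{S}(B)$, so $N_H[B]=f(N_{P_n}[B])=f(V(P_n))=V'$ (using that $f$ fixes $B$ and permutes $V'$); hence $B$ dominates every $H\in\mathcal{H}$, i.e.\ $B$ is a simultaneous dominating set. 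For the condition $B\nsubseteq N_H(v)$: here I invoke Lemma~\ref{LemmaDiameter>6orPathorCycle}, which says that for $P_n$ and $C_n$ with $n\ge 7$, no adjacency generator is contained in a single open neighbourhood; I then argue that this property transfers to every $H\in\mathcal{G}_B(P_n)\cup\mathcal{G}_B(C_n)$ because $B\subseteq N_H(v)$ would force $v\in N_H[B]$ and, reading off $N_H$ on $B$ via the fixing permutation $f$, would pull back to $B\subseteq N_{P_n}(f^{-1}(v))$ or $B\subseteq N_{C_n}(f^{-1}(v))$ (using $f(B)=B$), contradicting the lemma. This is the step I expect to be the main obstacle: one has to be careful that $B\subseteq N_H(v)$ is a statement about edges with one endpoint in $B$, which is precisely the information that $\langle B_H\rangle_w$ (and the defining relation $N_H(x)=f(N_G(x))$ for $x\in B$) controls, so the pullback is legitimate — but writing it cleanly requires being explicit about which copy of the seed graph $H$ came from and about the action of $f$ on $B$ versus on $V'$.

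With (1), (2), (3) in hand, Theorem~\ref{simAdjDimFamCoronaCase1} applies to the pair $(\mathcal{G},\mathcal{H})$ and yields $\Sd_A(\mathcal{G}\odot\mathcal{H})=|V|\cdot\Sd_A(\mathcal{H})=|V|\cdot\lfloor (2n+2)/5\rfloor$, which is the claimed formula. The only remaining loose end is to confirm that the hypotheses of Proposition~\ref{simAdjDimFamCoronaCase1GenPartCase1} guarantee the existence of the basis $B$ in the first place, but that is exactly the content of Lemma~\ref{lemmaExistAdjBDominatingCyclePath} under the congruence conditions $n\not\equiv 1,3\pmod 5$, so the statement is non-vacuous and the proof is complete.
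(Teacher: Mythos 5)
Your proposal is correct and follows essentially the same route as the paper: existence of $B$ from Lemma~\ref{lemmaExistAdjBDominatingCyclePath}, Theorem~\ref{dimAPermFamily} to get that $B$ is a simultaneous adjacency basis of ${\cal H}$, the defining relation $N_H(x)=f(N_{P_n}(x))$ (resp.\ $N_{C_n}$) on $B$ to transfer domination and the condition $B\nsubseteq N_H(v)$ from Lemma~\ref{LemmaDiameter>6orPathorCycle}, and then Theorem~\ref{simAdjDimFamCoronaCase1} together with Lemma~\ref{LemmaAdimPathCycle} for the value. Your explicit pullback via $f^{-1}(v)$, and your remark that mixed families drawn from both ${\cal G}_B(P_n)$ and ${\cal G}_B(C_n)$ are handled because $B$ is an adjacency basis of both seeds, are if anything slightly more careful than the paper's wording of the same steps.
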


\begin{proof}
The existence of $B$ is a consequence of Lemma~\ref{lemmaExistAdjBDominatingCyclePath}. Since $P_n \in {\cal H}$ or $C_n \in {\cal H}$, by Theorem \ref{dimAPermFamily} we deduce that $B$ is a simultaneous adjacency basis of ${\cal H}$. Let $V'=V(P_n)=V(C_n)$. By the definition of ${\cal G}_B$, we have that $\displaystyle{\bigcup_{v \in B}}N_{H}(v)=\displaystyle{\bigcup_{v \in B}}N_{P_n}(v)=V'$ or $\displaystyle{\bigcup_{v \in B}}N_{H}(v)=\displaystyle{\bigcup_{v \in B}}N_{C_n}(v)=V'$ for every $H \in {\cal H}$, so $B$ is a dominating set of every $H \in {\cal H}$. Moreover, by Lemma~\ref{LemmaDiameter>6orPathorCycle}, we have that $B \nsubseteq N_{P_n}(v)$ and $B \nsubseteq N_{C_n}(v)$ for every $v \in V'$. Furthermore, by the definition of ${\cal G}_B$, we have that $B \cap N_{H}(v)=B \cap N_{P_n}(v)$ or $B \cap N_{H}(v)=B \cap N_{C_n}(v)$ for every $H \in {\cal H}$ and every $v\in V'$, so $B \nsubseteq N_{H}(v)$ for every $H \in {\cal H}$ and every $v\in V'$. In consequence, the result follows from Lemma \ref{LemmaAdimPathCycle} and Theorems~\ref{dimAPermFamily} and~\ref{simAdjDimFamCoronaCase1}.
\end{proof}

In order to show some cases where $f(\mathcal{G},\mathcal{H})=|V|-1$, we present the following result. 

\begin{theorem}\label{simAdjDimFamCoronaCase2}
Let ${\cal G}$ be a family of connected non-trivial graphs on a common vertex set $V$ and let ${\cal H}$ be a family of non-trivial graphs on a common vertex set. If for every simultaneous adjacency basis $B$ of ${\cal H}$ there exists $H \in {\cal H}$ where $B$ is not a dominating set, then $$\Sd_A({\cal G} \odot {\cal H})=|V| \cdot\Sd_A({\cal H})+|V|-1.$$
\end{theorem}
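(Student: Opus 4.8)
The plan is to prove the two inequalities separately. The lower bound $\Sd_A(\mathcal{G}\odot\mathcal{H})\ge |V|\cdot\Sd_A(\mathcal{H})+|V|-1$ is the substantive direction; the upper bound follows already from (\ref{NaturalBoundsOnf}), since that equation exhibits a simultaneous adjacency generator for $\mathcal{G}\odot\mathcal{H}$ of size $|V|\cdot\Sd_A(\mathcal{H})+|V|-1$ (namely $(V-\{i\})\cup\bigcup_{j\in V}W_j$ for a simultaneous adjacency basis $W$ of $\mathcal{H}$ and any fixed $i\in V$). So the work is entirely in showing $f(\mathcal{G},\mathcal{H})\ge |V|-1$.

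For the lower bound, I would fix an arbitrary $G\odot H\in\mathcal{G}\odot\mathcal{H}$, let $B$ be a simultaneous adjacency basis of $\mathcal{G}\odot\mathcal{H}$, and set $B_i=B\cap V_i'$ and $B_V=B\cap V$. By Remark~\ref{BiSimGenHi}, each $B_i$ (viewed in $V'$ via the correspondence) is a simultaneous adjacency generator for $\mathcal{H}_i$, hence $|B_i|\ge\Sd_A(\mathcal{H})$ for every $i$; summing gives $\sum_{i\in V}|B_i|\ge|V|\cdot\Sd_A(\mathcal{H})$. It therefore suffices to show that $|B_V|\ge|V|-1$, i.e.\ that at most one vertex of $V$ can be missing from $B$. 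The key step is: if a vertex $i\in V$ is \emph{not} in $B$, then the corresponding $B_i$ must be a simultaneous adjacency \emph{basis} of $\mathcal{H}_i$ that is moreover a dominating set of every $H_i\in\mathcal{H}_i$. Indeed, suppose $i\notin B$ and let $v$ be a vertex of $H_i$ not dominated by $B_i$ in some $H_i\in\mathcal{H}_i$; then in $G\odot H$ (with this copy of $H$) neither $i$ nor $v$ is adjacent to any element of $B_i$, and no element of $B-B_i$ is adjacent to $v$ while the only elements of $B-B_i$ adjacent to $i$ lie in $V$ — but if $j\in B\cap V$ with $j\sim i$ then $j\not\sim v$ only if… here one checks that the pair $\{i,v\}$ is not distinguished by $B$ once $B_i$ fails to dominate $v$, because $v$ is at distance $\ge 2$ (so adjacency-distance $2$, i.e.\ "non-adjacent") from every vertex of $B$ outside $B_i$, and $i$'s only neighbours in $B$ would be in $B_i$ — contradiction. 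Similarly, if $i\notin B$ and $|B_i|$ were strictly larger than $\Sd_A(\mathcal{H})$ we get slack, but the crucial consequence is that $i\notin B$ forces $B_i$ to be a \emph{dominating} simultaneous adjacency basis of $\mathcal{H}_i$. Now if \emph{two} distinct vertices $i,j\in V$ were both absent from $B$, then $B_i$ and $B_j$ would each correspond to a simultaneous adjacency basis of $\mathcal{H}$ that is a dominating set of every member of $\mathcal{H}$ — contradicting the hypothesis that for every simultaneous adjacency basis $B$ of $\mathcal{H}$ there is some $H\in\mathcal{H}$ where $B$ is not dominating. Hence $|V\setminus B|\le 1$, so $|B_V|\ge|V|-1$, and adding this to $\sum_i|B_i|\ge|V|\cdot\Sd_A(\mathcal{H})$ gives $|B|\ge|V|\cdot\Sd_A(\mathcal{H})+|V|-1$, as required.

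The main obstacle I anticipate is the careful bookkeeping in the "forced domination" step: one must verify precisely which vertices of $B$ can possibly distinguish the pair $\{i,v\}$ in $G\odot H$ when $i\notin B$ and $v\in V_i'$ is not dominated by $B_i$. The subtlety is that $V$-vertices of $B$ adjacent to $i$ (the neighbours of $i$ in $G$) could in principle distinguish $\{i,v\}$, since they are adjacent to $i$ but never to $v$ (a vertex of $V$ in another copy's "center set" is non-adjacent to $v\in V_i'$). So one actually needs to handle the case where some neighbour of $i$ in $G$ lies in $B$. The resolution: such a $V$-neighbour of $i$ \emph{does} distinguish $i$ from $v$, so the obstruction argument must instead be run at the level of \emph{distinguishing pairs inside $V_i'$ together with the center}, or one restricts attention to showing that $B_i\cup\{i\}$-type configurations force the stated structure. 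A cleaner route, which I would follow in the write-up, is to note that whether or not $i\in B$, the set $B_i$ together with the information "is adjacent to $i$" is exactly what distinguishes vertices of $V_i'\cup\{i\}$ from each other among themselves; since every vertex of $V_i'$ is adjacent to $i$, the vertex $i$ is distinguished from $v\in V_i'$ only by some element of $B_i$ that is non-adjacent to $v$, or by a $V$-vertex in $B$ — so if $i\notin B$ and $B_i$ dominates all of $H_i$, fine, but the relevant contradiction to extract (when two centers are absent) is about $B_i$ and $B_j$ being dominating adjacency bases, which the hypothesis forbids simultaneously for only one basis; here one uses that distinct $i,j$ give the \emph{same} abstract set $B'\subseteq V'$ only if the permutation structure of $\mathcal{G}_B(H)$ is accounted for — so I would phrase the hypothesis-contradiction at the level of each $\mathcal{H}_i$ individually, invoking Remark~\ref{BiSimGenHi} and the fact that $\Sd_A(\mathcal{H}_i)=\Sd_A(\mathcal{H})$ for each $i$, and the nonexistence of a dominating simultaneous adjacency basis. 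Getting this last logical reduction exactly right — matching "dominating simultaneous adjacency basis of $\mathcal{H}_i$" back to "dominating simultaneous adjacency basis of $\mathcal{H}$" via the canonical vertex correspondence — is the delicate point.
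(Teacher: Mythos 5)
Your reduction via Remark~\ref{BiSimGenHi} and the upper bound from (\ref{NaturalBoundsOnf}) are fine, but the key step of your lower bound is false. You claim that if $i\notin B$ and some $v\in V'_i$ is not dominated by $B_i$ in some $H_i$, then the pair $\{i,v\}$ is undistinguished because ``neither $i$ nor $v$ is adjacent to any element of $B_i$''. In the corona product the centre $i$ is adjacent to \emph{every} vertex of its copy $V'_i$, hence to every element of $B_i$; since $B_i\neq\emptyset$ (it is a simultaneous adjacency generator for a family of non-trivial graphs), any element of $B_i$ already distinguishes $i$ from the undominated $v$. So ``$i\notin B$ forces $B_i$ to be dominating'' does not follow, and the issue you flag about $G$-neighbours of $i$ in $B\cap V$ is not the real obstacle. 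Indeed the claim cannot be repaired: if it were true, then (since, by hypothesis, no simultaneous adjacency basis of ${\cal H}$ dominates every member) \emph{no} centre could be missing, giving $|B\cap V|=|V|$ and the impossible bound $|V|\cdot\Sd_A({\cal H})+|V|$, which exceeds the upper bound you yourself established. Your intermediate target is also too strong: it is not true that every simultaneous adjacency basis of ${\cal G}\odot{\cal H}$ satisfies $|B\cap V|\ge |V|-1$, because an optimal generator may omit several vertices of $V$ and compensate by taking, in the corresponding copies, sets $B_i$ of size $\Sd_A({\cal H})+1$ that dominate; any correct count must allow this trade-off. Your closing ``cleaner route'' paragraph does not resolve any of this; it ends by conceding the reduction is unproved.

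The paper's proof handles exactly these two points. It partitions $V$ into $V_1=\{i:\,|U_i|=\Sd_A({\cal H})\}$ and $V_2=\{i:\,|U_i|\ge\Sd_A({\cal H})+1\}$, and for two indices $i,j\in V_1$ it applies the hypothesis to the bases $U_i,U_j$ to obtain, inside one and the same graph $G\odot H$, vertices $x\in V'_i-U_i$ and $y\in V'_j-U_j$ with $U_i\cap N_H(x)=\emptyset$ and $U_j\cap N_H(y)=\emptyset$; if both centres $i,j$ were missing, then $x$ and $y$ would have identical adjacency vectors (distance $2$ to every element of $U$), so the undistinguished pair lives \emph{across two copies}, not between a copy and its own centre. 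This yields only $|U\cap V|\ge|V_1|-1$, which, combined with $\sum_{i\in V_2}|U_i|\ge|V_2|(\Sd_A({\cal H})+1)$, closes the count to $|V|\cdot\Sd_A({\cal H})+|V|-1$. (Note, even there, that one must argue the two undominated vertices can be found in a common $H$.) As written, your proposal identifies the wrong undistinguished pair and lacks the $V_1/V_2$ bookkeeping, so it does not establish the lower bound.
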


\begin{proof}
By (\ref{MainFormulaAdjDim}) and (\ref{NaturalBoundsOnf}) we have that $\Sd_A({\cal G} \odot {\cal H})\le |V| \cdot\Sd_A({\cal H})+|V|-1.$
It remains  to prove that $\Sd_A({\cal G} \odot {\cal H})\ge |V| \cdot\Sd_A({\cal H})+|V|-1.$

Let  $U$ be a simultaneous adjacency basis of ${\cal G} \odot {\cal H}$, let $U_i=U \cap V'_i$ and let $U_0=U \cap V$. By Remark~\ref{BiSimGenHi}, $U_i$ is a simultaneous adjacency generator for ${\cal H}_i$ for every $i \in V$. Consider the partition $\{V_1,V_2\}$ of $V$ defined as $$V_1=\{i \in V:\; |U_i|=\Sd_A({\cal H})\}\ \text{and}\ V_2=\{i \in V:\; |U_i|\ge\Sd_A({\cal H})+1\}.$$ For any $i,j \in V_1$, $i\ne j$, we have that there exist a graph $H \in {\cal H}$ and two vertices $x \in V'_i-U_i$ and $y \in V'_j-U_j$ such that $U_i \cap N_{H}(x)=\emptyset$ and $U_j \cap N_{H}(y)=\emptyset$. Thus, $i \in U$ or $j \in U$ and so $|U_0| \geq |V_1|-1$. In conclusion,

\begin{align*}
\Sd_A({\cal G} \odot {\cal H})&=|U_0|+\displaystyle{\sum_{i\in V_1}}|U_i|+\sum_{i\in V_2}|U_i|\\
&\geq(|V_1|-1)+|V_1|\cdot\Sd_A({\cal H})+|V_2|\cdot(\Sd_A({\cal H})+1)\\
&=|V|\cdot\Sd_A({\cal H})+|V|-1.
\end{align*}

Therefore, the result follows.
\end{proof}

Now we treat some specific families for which the previous results hold. We first introduce an auxiliary result.

\begin{lemma}\label{lemmaNonExistAdjBDominatingCyclePath}
Let $P_n$ and $C_n$ be a path and a cycle graph of order $n \ge 7$. If $n \equiv 1 \mod 5$ or $n \equiv 3 \mod 5$, then no adjacency basis of $P_n$ or $C_n$ is a dominating set.
\end{lemma}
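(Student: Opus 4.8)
The plan is to recall the known structure of adjacency bases of $P_n$ and $C_n$ and combine it with the cardinality formula $\dim_A(C_n)=\dim_A(P_n)=\lfloor (2n+2)/5\rfloor$ from Lemma~\ref{LemmaAdimPathCycle}. The key observation is a \emph{domination} counting argument: a single vertex $v$ in $P_n$ dominates at most $3$ vertices (itself and its $\le 2$ neighbours), and in $C_n$ exactly $3$ vertices. Hence any dominating set $D$ of $P_n$ or $C_n$ must satisfy $|D|\ge \lceil n/3\rceil$. I would first check that when $n\equiv 1\pmod 5$ or $n\equiv 3\pmod 5$ we have $\lfloor (2n+2)/5\rfloor < \lceil n/3\rceil$, which would already finish the proof for those residues where the inequality is strict. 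Writing $n=5k+1$ gives $\dim_A=\lfloor(10k+4)/5\rfloor=2k$, while $\lceil n/3\rceil=\lceil(5k+1)/3\rceil$; and $n=5k+3$ gives $\dim_A=2k+1$, while $\lceil n/3\rceil=\lceil(5k+3)/3\rceil$. A quick check shows $2k<\lceil(5k+1)/3\rceil$ and $2k+1<\lceil(5k+3)/3\rceil$ for all $k\ge 1$ (equivalently $n\ge 6$), so the cardinality of an adjacency basis is strictly smaller than the minimum size of a dominating set. Therefore no adjacency basis can be a dominating set.

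The main technical step is the verification of the strict inequality $\lfloor (2n+2)/5\rfloor < \lceil n/3\rceil$ for the two residue classes. This reduces to an elementary inequality in $k$: for $n=5k+1$ we need $2k < (5k+1)/3$ rounded up, i.e. $6k < 5k+1+r$ where $r\in\{0,1,2\}$ makes $5k+1+r$ divisible by $3$; since $5k+1+r \ge 5k+1$ this gives $k < 1+r \le 3$ — wait, this shows the inequality can \emph{fail} for large $k$, so the crude domination bound $\lceil n/3\rceil$ is not by itself sufficient. The hard part, then, is that I must instead use the finer structural description of adjacency bases of paths and cycles (from \cite{JanOmo2012}), namely that an adjacency basis essentially consists of blocks covering the path/cycle in runs of length $5$ with a prescribed pattern, and show directly that in those blocks at least one vertex is left undominated precisely when $n\equiv 1$ or $3 \pmod 5$. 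Concretely, I would argue that in an optimal adjacency set the "gap" between consecutive chosen vertices forces an uncovered vertex whenever the total length is not $\equiv 0,\pm 2 \pmod 5$.

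Given that the authors have Lemma~\ref{lemmaExistAdjBDominatingCyclePath} (the complementary positive statement for $n\not\equiv 1,3 \pmod 5$), the cleanest route is likely a direct case analysis mirroring that proof: parametrise an arbitrary adjacency basis $B$ of $P_n$ (resp. $C_n$), use the known characterisation that consecutive elements of $B$ are at distance at most $3$ in the "resolving pattern" and that maximal undominated stretches have length $1$, then count. Specifically I would show $n = |N[B]|\le \sum_{v\in B}|N_{P_n}[v]|$ cannot reach $n$ when $|B|=\lfloor(2n+2)/5\rfloor$ and $n\equiv 1,3\pmod 5$, by tracking the exact overlaps forced by $B$ being a valid adjacency generator. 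I expect the overlap bookkeeping — showing the adjacency-resolving constraint forces enough "wasted" coverage — to be the real obstacle; once the extremal configuration is pinned down, the residue condition drops out of a short modular computation. As a sanity check I would verify the smallest cases $n=8$ ($\equiv 3$) and $n=11$ ($\equiv 1$) by hand to confirm the pattern before writing the general argument.
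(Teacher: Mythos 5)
Your first line of attack (comparing $\dim_A=\lfloor (2n+2)/5\rfloor$ with the domination lower bound $\lceil n/3\rceil$) cannot work, as you yourself notice mid-proof: since $2n/5>n/3$, an adjacency basis is \emph{larger} than that bound for all relevant $n$, so no contradiction can come from it. What remains after that retraction is only a plan, and the plan is missing exactly the ingredients that make the argument go through. The paper's proof for $C_n$ rests on precise structural facts about the ``gaps'' (maximal blocks of consecutive non-basis vertices) of an adjacency basis $B$: there is no gap of size $4$ or more, there is \emph{at most one} gap of size $3$, every gap of size $2$ or $3$ must be neighboured by two $1$-gaps, and the total number of gaps is at most $|B|$. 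Your sketch never states or proves these (and your claim that consecutive basis elements are ``at distance at most $3$'' is false in general --- a single $3$-gap, i.e.\ distance $4$, is allowed, and indeed the whole point is to show such a gap is \emph{forced}, which is what produces the undominated vertex). With those facts in hand the paper's count is short: for $n=5k+1$ one has $|B|=2k$, at most $k$ $2$-gaps, and if there were no $3$-gap then $|V(C_n)-B|\le 3k<3k+1=n-|B|$, a contradiction; similarly for $n=5k+3$. Your ``overlap bookkeeping'' paragraph acknowledges that this is the real obstacle but does not carry it out, so the core of the proof is absent.

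A second, independent gap is the path case. The lemma also asserts the statement for $P_n$, and an adjacency basis of $P_n$ is not automatically one of $C_n$, so the cycle argument does not transfer for free. The paper handles this by comparing $d_{P_n,2}$ and $d_{C'_n,2}$ on the cycle $C'_n$ obtained by joining the leaves $v_1,v_n$, and then splitting into cases according to whether $v_1,v_n$ (and $v_2,v_{n-1}$) lie in $B$, showing in each case that $B$ is (essentially) an adjacency basis of $C'_n$ whose forced $3$-gap yields a vertex of $P_n$ undominated by $B$. Your proposal treats paths and cycles as one undifferentiated case and never addresses this transfer, so even granting a completed cycle argument, the path half of the lemma would remain unproven.
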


\begin{proof}
In $C_n$, consider an adjacency basis $B$ and a path $v_iv_{i+1}v_{i+2}v_{i+3}v_{i+4}$, where the subscripts are taken modulo $n$. If $v_i,v_{i+2} \in B$ and $v_{i+1} \notin B$, then $\{v_{i+1}\}$ is said to be a 1-gap of $B$. Likewise, if $v_i,v_{i+3} \in B$ and $v_{i+1},v_{i+2} \notin B$, then $\{v_{i+1},v_{i+2}\}$ is said to be a 2-gap of $B$ and if $v_i,v_{i+4} \in B$ and $v_{i+1},v_{i+2},v_{i+3} \notin B$, then $\{v_{i+1},v_{i+2},v_{i+3}\}$ is said to be a 3-gap of $B$. Since $B$ is an adjacency basis of $C_n$, it has no gaps of size 4 or larger and it has at most one 3-gap. Moreover, every 2- or 3-gap must be neighboured by two 1-gaps and the number of gaps of either size is at most $\dim_A(C_n)$. We now differentiate the following cases for $C_n$:

\begin{enumerate}
\item $n=5k+1$, $k\ge 2$. In this case, by Lemma~\ref{LemmaAdimPathCycle}, $\dim_A(C_n)=2k$, and thus $n-\dim_A(C_n)=3k+1$. Since any 2-gap must be neighboured by two 1-gaps, any adjacency basis $B$ of $C_n$ has at most $k$ 2-gaps. Now, assume that $B$ has no 3-gaps. Then $|V(C_n)-B|=3k<3k+1=n-|B|$, which is a contradiction. Thus, any adjacency basis of $C_n$ has a 3-gap, \textit{i.e.} it is not a dominating set.
\item $n=5k+3$, $k\ge 1$. In this case, by Lemma~\ref{LemmaAdimPathCycle}, $\dim_A(C_n)=2k+1$, and thus $n-\dim_A(C_n)=3k+2$. As in the previous case, any adjacency basis $B$ of $C_n$ has at most $k$ 2-gaps. Now assume that $B$ has no 3-gaps. Then $|V(C_n)-B|=3k+1<3k+2=n-|B|$, which is a contradiction. Thus, any adjacency basis of $C_n$ has a 3-gap, \textit{i.e.} it is not a dominating set.
\end{enumerate}

By the set of cases above, the result holds for $C_n$.

Now, let $C'_n$ be the cycle obtained from $P_n$ by joining its leaves $v_1$ and $v_n$ by an edge. Let $V=V(P_n)=V(C'_n)$ and  let $B$ be an adjacency basis of $P_n$.  Since for two different vertices $x,y\in V$,   $d_{C'_n,2}(x,y)\ne d_{P_n,2}(x,y)$ if and only if $x,y\in \{ v_1,v_n\}$,  if $v_1,v_n \in B$ or $v_1,v_n \notin B$, then $B$ is an adjacency basis of $C_n$. Moreover, some vertex $w \in V-B$ satisfies $B \cap N_{P_n}(w)=B \cap N_{C'_n}(w)=\emptyset$, so $B$ is not a dominating set of $P_n$. We now treat the case where $v_1 \in B$ and $v_n \notin B$. If $v_{n-1} \notin B$ then $B$ is not a dominating set of $P_n$. If $v_{n-1} \in B$ and $v_2 \notin B$, we have that $d_{C'_n,2}(v_2,v_{n-1})=d_{P_n,2}(v_2,v_{n-1})=2 \neq 1=d_{P_n,2}(v_n,v_{n-1})=d_{C'_n,2}(v_n,v_{n-1})$, whereas for any other pair of different vertices $x,y \in V-B$ there exists $z \in B$ such that $d_{C'_n,2}(x,z)=d_{P_n,2}(x,z) \neq d_{P_n,2}(y,z)=d_{C'_n,2}(y,z)$, so $B$ is an adjacency basis of $C'_n$ where $\{v_n\}$ is a 1-gap. In consequence, some vertex $w \in V-(B\cup \{v_n\})$ satisfies $B \cap N_{P_n}(w)=B \cap N_{C'_n}(w)=\emptyset$, so $B$ is not a dominating set of $P_n$. Finally, if $v_2,v_{n-1} \in B$, then for any pair of different vertices $x,y \in V-B$ there exists $z \in B-\{v_1\}$ such that $d_{C'_n,2}(x,z)=d_{P_n,2}(x,z) \neq d_{P_n,2}(y,z)=d_{C'_n,2}(y,z)$, so $B$ is an adjacency basis of $C'_n$ where $\{v_n\}$ is a 1-gap. As in the previous case, some vertex $w \in V-(B\cup \{v_n\})$ satisfies $B \cap N_{P_n}(w)=B \cap N_{C'_n}(w)=\emptyset$, so $B$ is not a dominating set of $P_n$. The proof is complete.
\end{proof}

Lemma~\ref{lemmaNonExistAdjBDominatingCyclePath} allows us to give the following result.

\begin{proposition}\label{simAdjDimFamCoronaCase2GenPartCase1}
Let ${\cal G}$ be a family of connected non-trivial graphs on a common vertex set $V$. Let $P_n$ be a path graph of order $n \ge 7$, $n \equiv 1 \mod 5$ or $n \equiv 3 \mod 5$, and let $C_n$ be the cycle graph obtained from $P_n$ by joining its leaves by an edge. Let $B$ be a simultaneous adjacency basis of $\{P_n, C_n\}$. Then, for every family ${\cal H}={\cal H}_1 \cup {\cal H}_2$ such that ${\cal H}_1$ is composed by paths, ${\cal H}_1\subseteq {\cal G}_B(P_n)$, $P_n \in {\cal H}_1$, ${\cal H}_2$ is composed by cycles ${\cal H}_2\subseteq {\cal G}_B(C_n)$, and $C_n \in {\cal H}_2$, $$\Sd_A({\cal G} \odot {\cal H})=|V|\cdot\left(\left\lfloor\frac{2n+2}{5}\right\rfloor+1\right)-1.$$
\end{proposition}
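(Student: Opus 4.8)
The plan is to deduce this from Theorem~\ref{simAdjDimFamCoronaCase2}: that theorem delivers $\Sd_A({\cal G}\odot{\cal H})=|V|\cdot\Sd_A({\cal H})+|V|-1$ as soon as every simultaneous adjacency basis of ${\cal H}$ fails to be a dominating set of some graph in ${\cal H}$, and the claimed formula is exactly this identity once we know $\Sd_A({\cal H})=\left\lfloor\frac{2n+2}{5}\right\rfloor$. So I would carry out two steps: first evaluate $\Sd_A({\cal H})$, and then verify the non-domination hypothesis of Theorem~\ref{simAdjDimFamCoronaCase2}.

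For the first step I would argue that $B$ is simultaneously an \emph{adjacency basis} of $P_n$ and of $C_n$, i.e.\ that $\Sd_A(P_n,C_n)=\dim_A(P_n)=\dim_A(C_n)=\left\lfloor\frac{2n+2}{5}\right\rfloor$. The lower bound is Remark~\ref{trivialBoundsSimAdjDim}(i) together with Lemma~\ref{LemmaAdimPathCycle}; for the upper bound one starts from a minimum adjacency basis of $C_n$, notes it must contain a gap of size at least two (it misses $n-\dim_A(C_n)$ vertices, and in each of the two residue classes $n-\dim_A(C_n)>\dim_A(C_n)$, which bounds the number of gaps), relabels the cycle so that both endpoints of some internal edge of that gap play the roles of $v_1$ and $v_n$ (so the edge of $C_n$ identified with the non-edge $v_1v_n$ of $P_n$ has both ends outside the set), and then invokes the distance comparison from the proof of Lemma~\ref{lemmaNonExistAdjBDominatingCyclePath} (the case $v_1,v_n\notin B$, where $d_{C_n,2}$ and $d_{P_n,2}$ agree on all pairs involving a set member) to conclude this relabelled set also generates $P_n$. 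Given that $B$ is an adjacency basis of $P_n$ and of $C_n$, Theorem~\ref{dimAPermFamily} applied to ${\cal H}_1\subseteq{\cal G}_B(P_n)$ with $P_n\in{\cal H}_1$, and again to ${\cal H}_2\subseteq{\cal G}_B(C_n)$ with $C_n\in{\cal H}_2$, gives that $B$ is a simultaneous adjacency generator for ${\cal H}_1$ and for ${\cal H}_2$, hence for ${\cal H}={\cal H}_1\cup{\cal H}_2$; thus $\Sd_A({\cal H})\le|B|=\left\lfloor\frac{2n+2}{5}\right\rfloor$. Since $P_n\in{\cal H}$, Remark~\ref{trivialBoundsSimAdjDim}(i) and Lemma~\ref{LemmaAdimPathCycle} give the reverse inequality, so $\Sd_A({\cal H})=\left\lfloor\frac{2n+2}{5}\right\rfloor$.

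For the second step, let $B'$ be an arbitrary simultaneous adjacency basis of ${\cal H}$. Then $|B'|=\Sd_A({\cal H})=\dim_A(P_n)$, and $B'$ is an adjacency generator for $P_n$ since $P_n\in{\cal H}$, so $B'$ is an adjacency basis of $P_n$; because $n\equiv 1\bmod 5$ or $n\equiv 3\bmod 5$, Lemma~\ref{lemmaNonExistAdjBDominatingCyclePath} says $B'$ is not a dominating set of $P_n\in{\cal H}$. Hence the hypothesis of Theorem~\ref{simAdjDimFamCoronaCase2} is met and we get $\Sd_A({\cal G}\odot{\cal H})=|V|\cdot\left\lfloor\frac{2n+2}{5}\right\rfloor+|V|-1=|V|\left(\left\lfloor\frac{2n+2}{5}\right\rfloor+1\right)-1$. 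The only non-routine point in the whole argument is the upper-bound half of the first step, namely confirming that a simultaneous adjacency basis of $\{P_n,C_n\}$ indeed attains the common value $\left\lfloor\frac{2n+2}{5}\right\rfloor$, so that Theorem~\ref{dimAPermFamily} is applicable with $B$ an honest adjacency basis of each factor; once that is in place, the rest is bookkeeping with Theorems~\ref{dimAPermFamily} and~\ref{simAdjDimFamCoronaCase2} and Lemmas~\ref{LemmaAdimPathCycle} and~\ref{lemmaNonExistAdjBDominatingCyclePath}.
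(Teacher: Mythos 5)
Your proposal is correct and follows essentially the same route as the paper: show that $B$ is an adjacency basis of both $P_n$ and $C_n$, apply Theorem~\ref{dimAPermFamily} to conclude $B$ is a simultaneous adjacency basis of ${\cal H}$ with $\Sd_A({\cal H})=\left\lfloor\frac{2n+2}{5}\right\rfloor$, and then combine Lemma~\ref{lemmaNonExistAdjBDominatingCyclePath} with Theorem~\ref{simAdjDimFamCoronaCase2}. The only deviations are that you explicitly justify the paper's unproved opening claim (``$B$ is an adjacency basis of both $P_n$ and $C_n$'') by a correct gap-counting and rotation argument producing a common basis avoiding $v_1$ and $v_n$, and that you certify non-domination using only $P_n$ rather than every member of ${\cal H}$, both of which are sound.
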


\begin{proof}
Note that $B$ is an adjacency basis of both $P_n$ and $C_n$. Since $P_n \in {\cal H}_1$ and $C_n \in {\cal H}_2$, we have that $B$ is a simultaneous adjacency basis of ${\cal H}={\cal H}_1 \cup {\cal H}_2$ by Theorem~\ref{dimAPermFamily}. Moreover, since every $H \in {\cal H}_1$ is a path graph and every $H \in {\cal H}_2$ is a cycle we have that $\dim_A(H)=\Sd_A({\cal H})$ for every $H \in {\cal H}$, so every simultaneous adjacency basis of ${\cal H}$ is an adjacency basis of every $H \in {\cal H}$ and, by Lemma~\ref{lemmaNonExistAdjBDominatingCyclePath}, is not a dominating set of $H$. Thus, the result follows from Theorem~\ref{simAdjDimFamCoronaCase2}.
\end{proof}

It is worth noting that for a path graph $P_n$ and a cycle graph $C_n$, $n \ge 7$, $n \equiv 1 \mod 5$ or $n \equiv 3 \mod 5$, and an adjacency basis $B$ of both, the family ${\cal G}_B(P_n)$ contains $\left(n-\left\lfloor\frac{2n+2}{5}\right\rfloor\right)!$ path graphs, whereas the family ${\cal G}_B(C_n)$ contains $\left(n-\left\lfloor\frac{2n+2}{5}\right\rfloor\right)!$ cycle graphs.

\begin{proposition}\label{simAdjDimFamCoronaCase2GenPartCase2}
Let ${\cal G}$ be a family of connected non-trivial graphs on a common vertex set $V$ and let ${\cal H}=\{N_t \cup H_1,N_t \cup H_2,\ldots,N_t \cup H_k\}$, where $N_t$ is an empty graph of order $t \ge 1$ and $H_1,H_2,\ldots,H_k$ are connected non-trivial graphs on a common vertex set. Then, $$\Sd_A({\cal G} \odot {\cal H})=|V|\cdot\Sd_A({\cal H})+|V|-1.$$
\end{proposition}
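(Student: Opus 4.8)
The plan is to derive this from Theorem~\ref{simAdjDimFamCoronaCase2}: by that theorem, once we know that \emph{every} simultaneous adjacency basis $B$ of ${\cal H}$ fails to be a dominating set of at least one member of ${\cal H}$ --- equivalently, that no simultaneous adjacency basis of ${\cal H}$ is a simultaneous dominating set of ${\cal H}$ --- the claimed formula $\Sd_A({\cal G}\odot{\cal H})=|V|\cdot\Sd_A({\cal H})+|V|-1$ follows at once. So the whole task reduces to establishing that combinatorial fact about ${\cal H}$.

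First I would set up notation and record two elementary observations. Write $V'$ for the common vertex set of the graphs in ${\cal H}$, let $U\subseteq V'$ be the set of $t$ vertices coming from the common empty factor $N_t$, and put $V(H)=V'\setminus U$; note that $U$ and $V(H)$ are the \emph{same} sets for every $H\in{\cal H}$, and that each vertex of $U$ is isolated in every $N_t\cup H_j$. The observations are: (i) an isolated vertex cannot distinguish any pair of vertices, and any dominating set of a graph that has isolated vertices must contain all of them, so a simultaneous dominating set of ${\cal H}$ contains $U$; and (ii) for $x,y\in V(H)$, being adjacent in $N_t\cup H_j$ is the same as being adjacent in $H_j$.

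Then I would argue by contradiction. Suppose some simultaneous adjacency basis $B$ of ${\cal H}$ is also a simultaneous dominating set of ${\cal H}$. By (i), $U\subseteq B$; since $t\ge 1$ we may pick $u_0\in U$ and set $B'=B\setminus\{u_0\}$, so $|B'|=\Sd_A({\cal H})-1$. I claim $B'$ is still a simultaneous adjacency generator for ${\cal H}$, contradicting minimality of $B$. Fix $H_j$. The only pairs of vertices in $V'\setminus B'$ not already handled as for $B$ are the pairs $\{u_0,y\}$ with $y\in V(H)\setminus B$ (there is no pair inside $U\setminus B'=\{u_0\}$); for such a $y$, domination of $N_t\cup H_j$ by $B$ yields a neighbour $s\in B$ of $y$, which by (ii) lies in $B\cap V(H)\subseteq B'$ and is adjacent to $y$ but not to the isolated vertex $u_0$, so $s$ distinguishes them. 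Every other pair of $V'\setminus B'$ was distinguished in $N_t\cup H_j$ by some $s\in B$, and by (i) that $s$ cannot be $u_0$, hence $s\in B'$. This proves the claim, and with it the contradiction.

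Therefore no simultaneous adjacency basis of ${\cal H}$ is a simultaneous dominating set, so the hypothesis of Theorem~\ref{simAdjDimFamCoronaCase2} is met and the formula follows. I expect the only real care needed is the bookkeeping in the contradiction step (verifying that deleting $u_0$ destroys no distinguishing requirement) together with being explicit at the outset that the $N_t$-part is literally common to all graphs in ${\cal H}$, so that ``dominating set of each member'' forces $U\subseteq B$ uniformly; the degenerate case $t=1$ requires no separate treatment, since the argument uses only $U\neq\emptyset$.
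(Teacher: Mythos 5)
Your proof is correct and follows essentially the same route as the paper: both reduce the statement to Theorem~\ref{simAdjDimFamCoronaCase2} and exploit the fact that the isolated $N_t$-vertices are false twins that distinguish nothing, together with minimality of a simultaneous adjacency basis. Your contradiction argument (domination forces $V(N_t)\subseteq B$, then deleting one isolated vertex still leaves a simultaneous adjacency generator) just makes explicit the removal step that underlies the paper's ``if and only if'' claim, so the two proofs differ only in organization.
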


\begin{proof}
Consider that the common vertex set of ${\cal H}$ has the form $V'=V(N_t) \cup V''$, where $V(N_t)$ and $V''$ are disjoint. Let $B$ be a simultaneous adjacency basis of ${\cal H}$, and let $B''=B \cap V''$. Consider an arbitrary graph $N_t \cup H \in {\cal H}$. The vertices of $N_t$ are false twins, so $V(N_t) \subseteq B$ if and only if there exists $v \in V''$ such that $B \cap N_{H}(v)=\emptyset$. If such $v$ exists, it is not dominated by $B$, so the result follows from Theorem~\ref{simAdjDimFamCoronaCase2}. Otherwise, $V(N_t)-B=\{v'\}$ and $B \cap N_{H}(v')=\emptyset$, so the result follows from Theorem~\ref{simAdjDimFamCoronaCase2}.
\end{proof}

As usual, given a graph $G$, we denote by $\gamma(G)$ the domination number of $G$.
 
\begin{theorem}{\rm \cite{RV-F-2013}}\label{TheOnlyPosibilitiesDimAdjCorona(b)}
Let $G$ be a  connected graph of order $n\ge 2$ and let $H$ be a non-trivial graph.  
If there exists an adjacency basis of $H$, which is also a dominating set and if, for any adjacency basis $S$ of $H$,  there exists $v\in V(H)-S$ such that $S\subseteq N_H(v)$, 
then  $$\dim_A(G\odot H)=n\cdot \dim_A(H)+\gamma (G).$$
\end{theorem}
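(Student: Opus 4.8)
The plan is to prove the two inequalities $\dim_A(G\odot H)\ge n\cdot\dim_A(H)+\gamma(G)$ and $\dim_A(G\odot H)\le n\cdot\dim_A(H)+\gamma(G)$ separately. Write $V=V(G)$, let $H_i$ denote the $i$-th copy of $H$ in $G\odot H$, and for $X\subseteq V(G\odot H)$ set $X_i=X\cap V(H_i)$ and $X_0=X\cap V$. The starting observation (the single-graph analogue of the reasoning behind Remark~\ref{BiSimGenHi}) is that inside $G\odot H$ no vertex outside $V(H_i)$ can adjacency-distinguish two vertices of $V(H_i)$: a root $j\ne i$ and the vertices of the other copies are non-adjacent to $V(H_i)$, whereas $i$ itself is adjacent to every vertex of $V(H_i)$. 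Hence, for any adjacency basis $B$ of $G\odot H$, each $B_i$ is an adjacency generator of $H_i$, so $|B_i|\ge\dim_A(H)$.

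For the lower bound, partition $V$ into $V_1=\{i\in V:|B_i|=\dim_A(H)\}$ and $V_2=\{i\in V:|B_i|\ge\dim_A(H)+1\}$. For each $i\in V_1$ the set $B_i$ is an adjacency basis of $H_i$, so the second hypothesis on $H$ (transported along the isomorphism $H\cong H_i$) yields a vertex $v_i\in V(H_i)-B_i$ with $B_i\subseteq N_{H_i}(v_i)$. I then examine which elements of $B$ can distinguish the pair $\{i,v_i\}$: any vertex of $B_i$ is adjacent to both $i$ and $v_i$; any vertex of $B_j$ with $j\ne i$ is adjacent to neither; and a root $s\in B_0-\{i\}$ is never adjacent to $v_i$ and is adjacent to $i$ precisely when $s\sim_G i$. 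Consequently, if $i\notin B$ then $i$ has a neighbour in $B_0$, and in all cases $i\in N_G[B_0]$. Since also every $i\in V_2$ lies in $B_0\cup V_2$, the set $B_0\cup V_2$ is a dominating set of $G$, so $|B_0|+|V_2|\ge\gamma(G)$; using $|B_i|=\dim_A(H)$ on $V_1$, $|B_i|\ge\dim_A(H)+1$ on $V_2$, and $|V_1|+|V_2|=n$,
$$|B|=|B_0|+\sum_{i\in V_1}|B_i|+\sum_{i\in V_2}|B_i|\ge |B_0|+|V_2|+n\cdot\dim_A(H)\ge\gamma(G)+n\cdot\dim_A(H).$$

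For the upper bound I would exhibit an explicit adjacency generator of the right size. Let $S$ be an adjacency basis of $H$ that is also a dominating set (it exists by the first hypothesis), let $D$ be a minimum dominating set of $G$, and put $W=D\cup\bigcup_{i\in V}S_i$, where $S_i$ is the copy of $S$ in $V(H_i)$; then $|W|=n\cdot\dim_A(H)+\gamma(G)$. To check that $W$ is an adjacency generator of $G\odot H$, take distinct $x,y\notin W$ and consider the cases. If $x,y$ lie in the same copy $H_i$, then $S_i$, being an adjacency basis of $H_i$, contains a vertex adjacent (in $H_i$, hence in $G\odot H$) to exactly one of them. If $x\in V(H_i)$ and either $y\in V(H_j)$ with $j\ne i$ or $y=j\in V$ with $j\ne i$, then since $S_i$ dominates $H_i$ and $x\notin S_i$, some $s\in S_i$ is adjacent to $x$ but not to $y$. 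If $x\in V(H_i)$ and $y=i$, then $i\notin D$ (as $i\notin W$), so $D$ contains a neighbour $s$ of $i$ in $G$, and $s$ is adjacent to $i$ but not to $x$. Finally, if $x=i$ and $y=j$ are distinct roots, then any $s\in S_i$ (nonempty because $\dim_A(H)\ge 1$) is adjacent to $i$ but not to $j$. Thus $W$ distinguishes every such pair, and the equality follows.

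I expect the lower bound to be the main obstacle, and within it the use of the pair $\{i,v_i\}$: the second hypothesis on $H$ is exactly what forces, for every copy on which $B$ spends only $\dim_A(H)$ vertices, the corresponding root $i$ to be dominated in $G$ by $B_0$, and packaging this local fact into the global inequality $|B_0|+|V_2|\ge\gamma(G)$ is the crux. The remaining work is bookkeeping; the case analysis for the upper bound runs parallel to the proof of Theorem~\ref{TheOnlyPosibilitiesDimAdjCorona(a)} (the $f(G,H)=0$ case), the only genuinely new ingredient being the use of a minimum dominating set $D$ of $G$ to separate each vertex of $V(H_i)$ from its root $i$.
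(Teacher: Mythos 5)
Your proof is correct, and it follows essentially the same route that this paper (which quotes the theorem from \cite{RV-F-2013} without reproving it) uses for its family generalisation in Theorem~\ref{simAdjDimFamCoronaCase3}: the lower bound via the partition into copies where $|B_i|=\dim_A(H)$ versus $|B_i|\ge\dim_A(H)+1$, using the hypothesis $S\subseteq N_H(v)$ to force $B_0\cup V_2$ to dominate $G$, and the upper bound via the explicit generator $D\cup\bigcup_{i\in V}S_i$ with $D$ a minimum dominating set of $G$ and $S$ a dominating adjacency basis of $H$. Both your case analysis for the upper bound and the counting for the lower bound match the paper's argument, so there is nothing to correct.
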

 
 The \emph{simultaneous domination number} of a family ${\cal G}$, which we will denote as $\Sgamma({\cal G})$, is the minimum cardinality of a simultaneous dominating set. The next result is a generalisation of Theorem \ref{TheOnlyPosibilitiesDimAdjCorona(b)} to the case of ${\cal G} \odot {\cal H}$.

\begin{theorem}\label{simAdjDimFamCoronaCase3}
Let ${\cal G}$ be a family of connected non-trivial graphs on a common vertex set $V$ and let ${\cal H}$ be a family of non-trivial graphs on a common vertex set $V'$. If there exists a simultaneous adjacency basis of ${\cal H}$ which is also a simultaneous dominating set, and for every simultaneous adjacency basis $B$ of ${\cal H}$ there exist  $H \in {\cal H}$ and  $v \in V'-B$ such that  $B \subseteq N_H(v)$, then $$\Sd_A({\cal G} \odot {\cal H})=|V|\cdot\Sd_A({\cal H}) + \Sgamma({\cal G}).$$
\end{theorem}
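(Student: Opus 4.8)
The plan is to prove the two inequalities separately, mirroring the structure of the proofs of Theorems~\ref{simAdjDimFamCoronaCase1} and~\ref{simAdjDimFamCoronaCase2}, but now keeping careful track of the vertices of $V$ that are forced into any simultaneous adjacency basis. Fix arbitrary $G \odot H \in {\cal G}\odot{\cal H}$ and write $V'$ for the common vertex set of ${\cal H}$. For the \textbf{upper bound} $\Sd_A({\cal G} \odot {\cal H})\le |V|\cdot\Sd_A({\cal H}) + \Sgamma({\cal G})$, I would take a simultaneous adjacency basis $B$ of ${\cal H}$ that is also a simultaneous dominating set (such $B$ exists by hypothesis) and a simultaneous dominating set $M$ of ${\cal G}$ with $|M|=\Sgamma({\cal G})$. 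Set $S = M \cup \bigcup_{i\in V}B_i$, where $B_i\subset V'_i$ is the copy of $B$. One then checks that $S$ is an adjacency generator for $G\odot H$: pairs of vertices inside a single copy $V_i'$ are distinguished by $B_i$ since $B$ is an adjacency generator for $H$; a vertex of $V_i'$ and a vertex of $V_j'$ (or of $V$) at "copy-distance'' at least two are separated because $B_i$ dominates $V_i'$ in $H_i$, so some element of $B_i$ is adjacent to the $V_i'$-vertex but not to the other; and two vertices $i,j\in V$ are distinguished since $M$ is a dominating set of $G$, hence some $m\in M$ is adjacent to exactly one of them, or $m$ equals one of them — this is precisely the argument in Theorem~\ref{TheOnlyPosibilitiesDimAdjCorona(b)}. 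Since $S$ does not depend on the choice of $G$ or $H$, it is a simultaneous adjacency generator for ${\cal G}\odot{\cal H}$, giving the bound.

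For the \textbf{lower bound} $\Sd_A({\cal G} \odot {\cal H})\ge |V|\cdot\Sd_A({\cal H}) + \Sgamma({\cal G})$, let $U$ be a simultaneous adjacency basis of ${\cal G}\odot{\cal H}$, let $U_i = U\cap V'_i$ and $U_0 = U\cap V$. By Remark~\ref{BiSimGenHi}, each $U_i$ is a simultaneous adjacency generator for ${\cal H}_i$, so $|U_i|\ge \Sd_A({\cal H})$. As in Theorem~\ref{simAdjDimFamCoronaCase2}, partition $V$ into $V_1=\{i: |U_i|=\Sd_A({\cal H})\}$ and $V_2=\{i: |U_i|\ge \Sd_A({\cal H})+1\}$. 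The key claim is that $U_0 \cup V_2$ is a simultaneous dominating set of ${\cal G}$, which yields
\begin{align*}
\Sd_A({\cal G} \odot {\cal H}) &= |U_0| + \sum_{i\in V_1}|U_i| + \sum_{i\in V_2}|U_i| \ge |U_0\cup V_2| + |V_2|\cdot\Sd_A({\cal H}) + |V_1|\cdot\Sd_A({\cal H})\\
&\ge \Sgamma({\cal G}) + |V|\cdot\Sd_A({\cal H}).
\end{align*}
To prove the claim, fix $G\in{\cal G}$ and a vertex $i\in V\setminus(U_0\cup V_2)$, so $i\in V_1\setminus U_0$, i.e. $|U_i|=\Sd_A({\cal H})$ and $i\notin U$. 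For the graph $G\odot H$ in which the $i$-th copy realises a simultaneous adjacency basis of ${\cal H}$ that, by hypothesis, fails to dominate $H$ at some vertex $v$: then $v_i\in V_i'\setminus U_i$ has $U_i\cap N_{H_i}(v_i)=\emptyset$, and since $i\notin U$, the only way $v_i$ can be distinguished in $G\odot H$ from $i$ itself is via a vertex of $U_0$ adjacent (in $G$) to $i$ — because in $G\odot H$ the vertex $v_i$ is adjacent only to $i$ and to vertices of $V_i'$, while $i$ is adjacent to $V_i'$, to vertices of $V$, and has the same distance to everything outside $N_G[i]\cup V_i'$. Hence some element of $U_0$ is adjacent to $i$ in $G$, so $i$ is dominated by $U_0\cup V_2$ in $G$. (Vertices of $U_0\cup V_2$ are themselves trivially dominated.) A separate sub-argument, also along the lines of Theorem~\ref{simAdjDimFamCoronaCase2}, handles the possibility $|U_i| = \Sd_A({\cal H})$ with $U_i$ nonetheless distinguishing all of $V_i'$ but with the associated realisation still forcing $i\in U_0$; but since $\Sd_A({\cal H})$ is by definition the minimum and every $U_i$ must be a simultaneous adjacency generator of ${\cal H}_i$, the hypothesis on bases applies to the copy $U_i$ directly.

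The \textbf{main obstacle} is the claim that $U_0\cup V_2$ simultaneously dominates ${\cal G}$, and in particular making the dependence on $G\in{\cal G}$ uniform. The subtlety is that the "bad vertex'' $v$ with $B\subseteq N_H(v)$ (equivalently, after deleting $v$ from a would-be basis, an undominated vertex) is guaranteed only to exist for \emph{some} $H\in{\cal H}$, and one must argue that for each $G\in{\cal G}$ and each $i\in V_1\setminus U_0$ there is a choice of $H$ (hence a graph $G\odot H\in{\cal G}\odot{\cal H}$) exhibiting an undominated vertex in copy $i$ while leaving $U_i$ a valid adjacency generator there — which is exactly where $U_i$ being a simultaneous adjacency generator for all of ${\cal H}_i$, combined with the hypothesis, is used. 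Once this uniformity is pinned down, the counting inequality above closes the argument and equality follows by combining with the upper bound.
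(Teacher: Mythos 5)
Your overall skeleton matches the paper's proof: upper bound via $S=M\cup\bigl(\bigcup_{i\in V}B_i\bigr)$ with $B$ a simultaneous adjacency basis that simultaneously dominates ${\cal H}$ and $M$ a minimum simultaneous dominating set of ${\cal G}$ (deferring to Theorem~\ref{TheOnlyPosibilitiesDimAdjCorona(b)}), and lower bound via the partition $\{V_1,V_2\}$, the claim that $V_2\cup U_0$ is a simultaneous dominating set of ${\cal G}$, and the same counting. However, your justification of that key claim misuses the hypothesis, and as written the step fails. For $i\in V_1\setminus U_0$ you invoke a graph $H$ and a vertex $v$ with $U_i\cap N_{H}(v)=\emptyset$, i.e.\ a vertex \emph{not dominated} by $U_i$. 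That is the hypothesis of Theorem~\ref{simAdjDimFamCoronaCase2}, not of this theorem; here such a $v$ need not exist at all (indeed the theorem assumes that some simultaneous adjacency basis of ${\cal H}$ \emph{is} a simultaneous dominating set, and $U_i$ may well be such a basis). Worse, even when such a $v$ exists, the pair $\{i,v_i\}$ \emph{is} distinguished by every $y\in U_i$, since $d_{G\odot H,2}(i,y)=1$ while $d_{G\odot H,2}(v_i,y)=2$; so your assertion that $v_i$ can only be separated from $i$ by a vertex of $U_0$ adjacent to $i$ is false, and nothing in your setup forces $U_0$ to dominate $i$.

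The correct use of the hypothesis is the opposite one: for $i\in V_1$ the set $U_i$ is a simultaneous adjacency basis of ${\cal H}_i$, so there exist $H\in{\cal H}$ and $x\in V'_i-U_i$ with $U_i\subseteq N_H(x)$. Then $i$ and $x$ are both adjacent to every vertex of $U_i$, and both are at $d_{G\odot H,2}$-distance $2$ from every vertex of $U_j$ with $j\ne i$, so in every graph of ${\cal G}\odot H$ either $i\in U_0$ or some $z\in U_0$ adjacent to $i$ in $G$ must separate them ($d_{G\odot H,2}(i,z)=1\ne 2=d_{G\odot H,2}(x,z)$). Since this holds for every $G\in{\cal G}$, the set $V_2\cup U_0$ is a simultaneous dominating set of ${\cal G}$, and your counting then closes the lower bound exactly as in the paper. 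A related slip occurs in your upper-bound sketch: $M$ is not what separates pairs $i,j\in V$ (any $w\in B_i$ already does, since $d_{G\odot H,2}(i,w)=1\ne 2=d_{G\odot H,2}(j,w)$); it is needed precisely to separate $i$ from a vertex $x\in V'_i$ with $B\subseteq N_H(x)$, a pair your case analysis omits -- although your appeal to the proof of Theorem~\ref{TheOnlyPosibilitiesDimAdjCorona(b)} does cover it, just as the paper's appeal does.
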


\begin{proof}
We first address the proof of $\Sd_A({\cal G} \odot {\cal H}) \geq |V|\cdot\Sd_A({\cal H})+\Sgamma({\cal G})$. Let $U$ be a simultaneous adjacency basis of ${\cal G} \odot {\cal H}$, let $U_i=U \cap V'_i$, and let $U_0 = U \cap V$. By Remark~\ref{BiSimGenHi}, $U_i$ is a simultaneous adjacency generator for ${\cal H}_i$ for every $i \in V$. Consider the partition $\{V_1,V_2\}$ of $V$ defined as $$V_1=\{i \in V:\;|U_i|=\Sd_A({\cal H})\}\ \text{and}\ V_2=\{i \in V:\;|U_i| \geq \Sd_A({\cal H})+1\}.$$

For every $i \in V_1$, the set $U_i$ is a simultaneous adjacency basis of ${\cal H}_i$, so there exist $H \in {\cal H}$ and $x \in V'_i$ such that $U_i \subseteq N_H(x)$, causing $i$ and $x$ not to be distinguished by any $y \in U_i$ in any graph belonging to ${\cal G}\odot H$. Thus, either $i \in U_0$ or for every $G \in {\cal G}$ there exists $z \in U_0$ such that $d_{G \odot H,2}(i,z)=1 \neq 2=d_{G \odot H,2}(x,z)$. In consequence, $V_2 \cup U_0$ must be a simultaneous dominating set of ${\cal G}$, so $|V_2 \cup U_0| \geq \Sgamma({\cal G})$. Finally,

\begin{align*}
\Sd_A({\cal G} \odot {\cal H})&=\displaystyle{\sum_{i \in V_1}}|U_i|+\displaystyle{\sum_{i \in V_2}}|U_i|+|U_0|\\
&\geq\displaystyle{\sum_{i \in V_1}}\Sd_A({\cal H})+\displaystyle{\sum_{i \in V_2}}\left(\Sd_A({\cal H})+1\right)+|U_0|\\
&=|V|\cdot\Sd_A({\cal H})+|V_2|+|U_0|\\
&\geq|V|\cdot\Sd_A({\cal H})+|V_2 \cup U_0|\\
&\geq|V|\cdot\Sd_A({\cal H})+\Sgamma({\cal G}).
\end{align*}

Now, let $W$ be a simultaneous adjacency basis of ${\cal H}$ which is also a simultaneous dominating set of ${\cal H}$. Consider an arbitrary graph $G \odot H \in {\cal G}\odot{\cal H}$, and let $W_i=W \cap V'_i$. By analogy to the proof of Theorem~\ref{TheOnlyPosibilitiesDimAdjCorona(b)}, we have that $S=M\bigcup\left(\displaystyle{\bigcup_{i\in V}}W_i\right)$, where $M$ is a minimum simultaneous dominating set of ${\cal G}$, is an adjacency generator for $G \odot H$. Since $S$ does not depend on the election of $G$ and $H$, it is a simultaneous adjacency generator for ${\cal G} \odot {\cal H}$. Thus, $\Sd_A({\cal G} \odot {\cal H}) \leq \vert S \vert = |V|\cdot\Sd_A({\cal H})+\Sgamma({\cal G})$, so the equality holds.
\end{proof}

Several specific families for which the previous result holds will be described in Theorem~\ref{GodotJoinCase2} and Propositions~\ref{simAdjDimFamCoronaCase3FamGPartCase2} and~\ref{simAdjDimFamCoronaCase3FamGPartCase3}. Now, in order to present our next result, we need some additional definitions. Let $v \in V(G)$ be a vertex of a graph $G$ and let $G-v$ be the graph obtained by removing from $G$ the vertex $v$ and all its incident edges. Consider the following auxiliary domination parameter, which is defined in \cite{RV-F-2013}: $$\gamma'(G)=\underset{v \in V(G)}{\min}\{\gamma(G-v)\}$$

\begin{theorem}{\rm \cite{RV-F-2013}}\label{TheOnlyPosibilitiesDimAdjCorona(c)}
Let $H$ be a non-trivial graph such that some of its adjacency bases are also dominating sets, and some are not. If there exists an adjacency basis $S'$ of $H$ such that for every $v \in V(H)-S'$ it is satisfied that $S' \nsubseteq N_H(v)$, and for any adjacency basis $S$ of $H$ which is also a dominating set, there exists some $v \in V(H)-S$ such that $S \subseteq N_H(v)$, then for any connected non-trivial graph $G$, $$\dim_A(G \odot H)=n\cdot\dim_A(H)+\gamma'(G).$$
\end{theorem}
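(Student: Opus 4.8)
The plan is to prove $\dim_A(G\odot H)=n\cdot\dim_A(H)+\gamma'(G)$ by establishing the two inequalities separately, working throughout with adjacency generators and the elementary description of adjacency in a corona: a copy vertex $i\in V$ is adjacent exactly to $N_G(i)$ and to all of $V_i'$; a vertex of $V_i'$ is adjacent only to $i$ and to its neighbours inside $H_i$; and no vertex of $V_i'$ is adjacent to a vertex of $V_j'$ or to a copy vertex $j\ne i$. For $B\subseteq V(G\odot H)$ I write $B_i=B\cap V_i'$ and $B_0=B\cap V$, and I recall that $\dim_A(H)\ge 1$ because $H$ is non-trivial.

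For the lower bound I would begin with the single-graph analogue of Remark~\ref{BiSimGenHi}: if $B$ is an adjacency generator of $G\odot H$ then each $B_i$ is an adjacency generator of $H_i\cong H$, so $|B_i|\ge\dim_A(H)$. Partitioning $V$ into $V_1=\{i:|B_i|=\dim_A(H)\}$ and $V_2=\{i:|B_i|\ge\dim_A(H)+1\}$ gives $|B|\ge|B_0|+n\cdot\dim_A(H)+|V_2|$, so it suffices to show $|B_0|+|V_2|\ge\gamma'(G)$. For $i\in V_1$ the set $B_i$ is an adjacency basis of $H_i$. If it is a dominating set, the hypothesis on dominating adjacency bases provides $x\in V_i'-B_i$ with $B_i\subseteq N_{H_i}(x)$; then $B_i$ does not distinguish $i$ from $x$, and the adjacency description shows the only vertices that can are the copy vertices in $N_G(i)$, so $i\in N_G[B_0]$. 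If $B_i$ is not a dominating set, fix $w_i\in V_i'-B_i$ with $N_{H_i}(w_i)\cap B_i=\emptyset$; for two such copies $i,j$, the only vertices that distinguish $w_i$ from $w_j$ are $i$ and $j$ themselves, so at most one copy with a non-dominating basis lies outside $B_0$. Letting $v_0$ be that exceptional copy (if it exists), every vertex of $V\setminus\{v_0\}$ then lies in $N_G[B_0\cup V_2]$ with the domination never passing through $v_0$ (this uses $v_0\notin B_0$), so $B_0\cup V_2$ dominates $G-v_0$; if there is no exceptional copy, $B_0\cup V_2$ dominates $G$ itself. In either case $|B_0\cup V_2|\ge\gamma'(G)$, whence $|B_0|+|V_2|\ge\gamma'(G)$ and $|B|\ge n\cdot\dim_A(H)+\gamma'(G)$.

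For the upper bound I would exhibit an adjacency generator of the target size. Pick $v_0\in V$ realising $\gamma(G-v_0)=\gamma'(G)$, a minimum dominating set $D$ of $G-v_0$, an adjacency basis $S$ of $H$ that is a dominating set (one exists by hypothesis), and the adjacency basis $S'$ of $H$ with $S'\nsubseteq N_H(v)$ for all $v\in V(H)-S'$. Set $B^*=D\cup S'_{v_0}\cup\bigcup_{i\ne v_0}S_i$, of cardinality $\gamma'(G)+n\cdot\dim_A(H)$, and check it is an adjacency generator by cases on a pair $x,y\notin B^*$. Two vertices of one copy $V_i'$ are separated since $B^*\cap V_i'$ is an adjacency basis of $H_i$; two copy vertices $i,j$ are separated by any element of $B^*\cap V_i'$; a copy vertex paired with anything else, and a pair lying in two different copies, is separated either by a dominator inside some $S_i$ with $i\ne v_0$ (using that $S$ dominates $H$) or by a neighbour of a copy vertex inside $D$ (using that every copy vertex other than $v_0$ is dominated by $D$ in $G-v_0$). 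The only delicate pairs are $\{x,v_0\}$ with $x\in V_{v_0}'$: since $v_0\notin D$ there is no help from $D$, and separation must come from $S'_{v_0}$, which succeeds precisely because $S'\nsubseteq N_H(x)$.

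The step I expect to be the main obstacle is the bookkeeping in the lower bound: one must argue at once that the non-dominating copies leave at most one uncovered vertex and that the dominating copies are dominated by $B_0$ along edges not incident with that exceptional vertex $v_0$, so that $B_0\cup V_2$ is genuinely a dominating set of $G-v_0$; the relation $v_0\notin B_0$ is exactly what makes these fit together. The upper-bound case analysis is routine once one realises that placing the non-dominating basis $S'$ — rather than a dominating one — in the $v_0$-th copy is precisely the device needed to repair the only pairs that a dominating set $D$ of $G-v_0$ cannot address.
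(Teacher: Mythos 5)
Your proof is correct: the lower-bound partition of the copies (dominating basis, non-dominating basis, oversized $B_i$) with at most one exceptional copy $v_0$ outside $B_0$, and the upper-bound generator $D\cup S'_{v_0}\cup\bigcup_{i\ne v_0}S_i$, both check out, including the key points that $v_0\notin B_0$ makes $B_0\cup V_2$ dominate $G-v_0$ and that placing $S'$ in the $v_0$-th copy handles the pairs $\{v_0,x\}$ with $x\in V'_{v_0}$. The paper only cites this statement from \cite{RV-F-2013}, but its proof of the generalization (Theorem~\ref{simAdjDimFamCoronaCase4}) follows exactly this scheme --- the same partition of copies according to whether $U_i$ is a (dominating) adjacency basis and the same construction $M\cup W'_n\cup\bigl(\bigcup_{i\in V-\{n\}}W''_i\bigr)$ --- so your approach is essentially the same as the paper's.
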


The following result is a generalisation of Theorem~\ref{TheOnlyPosibilitiesDimAdjCorona(c)} to the case of $G \odot {\cal H}$.

\begin{theorem}\label{simAdjDimFamCoronaCase4}
Let $G$ be a connected graph of order $n \geq 2$ and let ${\cal H}$ be a family of non-trivial graphs on a common vertex set $V'$ such that some of its simultaneous adjacency bases are also simultaneous dominating sets, and some are not. If there exists a simultaneous adjacency basis $B'$ of ${\cal H}$ such that $B' \nsubseteq N_H(v)$ for every $H \in {\cal H}$ and every $v \in V'-B'$, and for every simultaneous adjacency basis $B$ of ${\cal H}$ which is also a simultaneous dominating set there exist $H' \in {\cal H}$ and $w \in V'-B$ such that $B \subseteq N_{H'}(w)$, then $$\Sd_A(G \odot {\cal H})=n\cdot\Sd_A({\cal H})+\gamma'(G).$$
\end{theorem}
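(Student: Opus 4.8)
The plan is to prove the equality by establishing the two inequalities separately, adapting each half of the proof of Theorem~\ref{TheOnlyPosibilitiesDimAdjCorona(c)} to the simultaneous setting, in the same spirit in which Theorem~\ref{simAdjDimFamCoronaCase3} generalised Theorem~\ref{TheOnlyPosibilitiesDimAdjCorona(b)}; write $V$ for the vertex set of $G$ and $V'_i$ for the $i$-th copy of $V'$. For the upper bound I would first note that the basis $B'$ of the hypothesis cannot be a simultaneous dominating set: otherwise the second hypothesis would supply $H'\in{\cal H}$ and $w\in V'-B'$ with $B'\subseteq N_{H'}(w)$, contradicting $B'\nsubseteq N_H(v)$ for all $H$ and all $v\in V'-B'$. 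So fix this non-dominating ``free'' basis $B'$, pick $v^{*}\in V$ with $\gamma'(G)=\gamma(G-v^{*})$, and let $M$ be a minimum dominating set of $G-v^{*}$. I claim $S=M\cup\bigcup_{i\in V}B'_{i}$, where $B'_i\subseteq V'_i$ corresponds to $B'$, is a simultaneous adjacency generator for $G\odot{\cal H}$; this is checked graph by graph and pair by pair exactly as in Theorem~\ref{TheOnlyPosibilitiesDimAdjCorona(c)}. Pairs inside a copy $V'_i$ are resolved by $B'_i$; a pair $\{i,x\}$ with $x\in V'_i-B'_i$ is resolved because the free property yields $u\in B'_i$ with $u\not\sim x$, so $d_{G\odot H,2}(u,i)=1\neq 2=d_{G\odot H,2}(u,x)$; and cross-copy pairs, as well as pairs $\{i,x\}$ with $x$ outside copy $i$, are resolved using $M$ together with the copies, the vertex $v^{*}$ being precisely the one whose copy is allowed to rely only on $B'_{v^{*}}$ and the free property. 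Since $S$ does not depend on the choice of $H\in{\cal H}$, and $M\subseteq V$ is disjoint from all the $B'_i$, we get $\Sd_A(G\odot{\cal H})\le|S|=n\cdot\Sd_A({\cal H})+\gamma'(G)$.

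For the lower bound, let $U$ be a simultaneous adjacency basis of $G\odot{\cal H}$, set $U_i=U\cap V'_i$ and $U_0=U\cap V$. By Remark~\ref{BiSimGenHi} each $U_i$ is a simultaneous adjacency generator for ${\cal H}_i$. Partitioning $V=V_1\cup V_2$ with $V_1=\{i:\ |U_i|=\Sd_A({\cal H})\}$ and $V_2=\{i:\ |U_i|\ge\Sd_A({\cal H})+1\}$ gives $|U|\ge|U_0|+|V_2|+n\cdot\Sd_A({\cal H})$, so it suffices to prove $|U_0|+|V_2|\ge\gamma'(G)$. For $i\in V_1$, $U_i$ corresponds to a simultaneous adjacency basis $B^{(i)}$ of ${\cal H}$. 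If $B^{(i)}$ is a simultaneous dominating set, the hypothesis supplies $H'\in{\cal H}$ and $w\in V'-B^{(i)}$ with $B^{(i)}\subseteq N_{H'}(w)$; in $G\odot H'$ the pair $\{i,w_i\}$ gets equal $d_{G\odot H',2}$-distances from every element of $U_i$ and of every $U_j$ with $j\ne i$, so it can only be resolved from $U_0$, which forces $N_G[i]\cap U_0\neq\emptyset$; the same conclusion holds whenever $B^{(i)}\subseteq N_H(v)$ for some $H\in{\cal H}$ and $v\in V'-B^{(i)}$, even if $B^{(i)}$ is not a simultaneous dominating set. Hence the only copies $i\in V_1$ for which $N_G[i]\cap U_0$ may be empty are those whose $B^{(i)}$ is a free basis as in the hypothesis; let $F'$ be the set of such copies lying in neither $U_0$ nor $V_2$ and having no neighbour in $U_0$. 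Then $U_0\cup V_2$ dominates every vertex of $G$ outside $F'$.

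The crux is to show $|F'|\le 1$. Granting this, $U_0\cup V_2$ is a dominating set of $G-f$ for the unique $f\in F'$ (or for any vertex outside $U_0\cup V_2$ if $F'=\emptyset$), and since $f\notin U_0\cup V_2$ we obtain $|U_0|+|V_2|\ge|U_0\cup V_2|\ge\gamma(G-f)\ge\gamma'(G)$, which together with the upper bound yields the equality. To bound $|F'|$ I would argue by contradiction via a cross-copy distinguishing argument: if $i,j\in F'$ are distinct, choose $H\in{\cal H}$ on which $B^{(i)}$ fails to dominate and a vertex $a\in V'_i$ with no neighbour in $U_i$; then for any vertex $b$ in a copy $V'_k$ with $k\ne i$, $k\notin U_0$, and $b$ having no neighbour in $U_k$ in $G\odot H$, the pair $\{a,b\}$ is given distance $2$ by every element of $U_i$, of $U_k$, of every other $U_m$, and of $U_0$ (the last because neither $i$ nor $k$ lies in $U_0$), contradicting that $U$ resolves $G\odot H$. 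Forcing this line of reasoning all the way to $|F'|\le 1$ is the step I expect to be the main obstacle: unlike in Theorem~\ref{TheOnlyPosibilitiesDimAdjCorona(c)}, where a single graph $H$ is involved, here distinct copies in $F'$ may rely on different members of ${\cal H}$ for their failure of domination, so the argument has to keep careful track of which $H\in{\cal H}$ witnesses the non-domination of each $B^{(i)}$ and combine these witnesses across all of $F'$, exploiting that each such $B^{(i)}$ is a free, non-simultaneously-dominating basis.
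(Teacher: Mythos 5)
Your proof has two genuine gaps, one in each inequality. For the upper bound, the set you propose, $S=M\cup\bigcup_{i\in V}B'_i$, is in general \emph{not} an adjacency generator. As you yourself observe, the free basis $B'$ cannot be a simultaneous dominating set, so there are $H_0\in{\cal H}$ and $x_0\in V'-B'$ with $B'\cap N_{H_0}(x_0)=\emptyset$. Now take two copies $i,j\notin M$ (these exist whenever $\gamma'(G)\le n-2$, i.e.\ in all but degenerate cases) and consider the copies of $x_0$ in $V'_i$ and $V'_j$ inside $G\odot H_0$: every vertex of every $B'_k$ is at truncated distance $2$ from both (for $k=i,j$ because $x_0$ has no $B'$-neighbour in $H_0$, for other $k$ because the vertices lie in different copies), and a vertex $z\in M\subseteq V$ satisfies $d_{G\odot H_0,2}(z,\cdot)=1$ for a vertex of $V'_i$ only if $z=i$; since $i,j\notin M$, the pair is unresolved. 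This is precisely why the hypothesis that \emph{some} simultaneous adjacency basis $W''$ of ${\cal H}$ is a simultaneous dominating set — which your argument never uses — is needed: the paper's generator is $S=M\cup W'_{v^*}\cup\bigl(\bigcup_{i\in V-\{v^*\}}W''_i\bigr)$, i.e.\ the dominating basis in every copy except the one sitting over the vertex $v^*$ left undominated by $M$, and the free basis $W'$ only in that single copy. Cross-copy pairs are then handled by domination inside the copies, and the exceptional copy $v^*$ by the free property.

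For the lower bound, your scheme is essentially the paper's (the paper partitions $V$ into copies where $U_i$ is a basis but not a simultaneous dominating set, a basis and a simultaneous dominating set, or not a basis, and shows that $U_0\cup V_3$ dominates all but at most one vertex of $G$, whence $|U_0|+|V_3|\ge\gamma'(G)$), but the decisive step — your claim $|F'|\le 1$ — is exactly the point you leave open and flag as the main obstacle, so your argument does not establish the inequality. The paper closes it by asserting that for two copies $i,j$ whose $U_i,U_j$ are simultaneous adjacency bases but not simultaneous dominating sets there exist a \emph{single} $H\in{\cal H}$ and vertices $v_i\in V'_i-U_i$, $v_j\in V'_j-U_j$ with $U_i\cap N_H(v_i)=\emptyset$ and $U_j\cap N_H(v_j)=\emptyset$; that pair is resolved by no vertex of $U$ unless $i\in U_0$ or $j\in U_0$, so at most one such copy escapes $U_0$. (Your concern about different members of ${\cal H}$ witnessing the non-domination of different copies is exactly the point at which the paper invokes this common witness; it is not something your argument can simply skip.) In summary, the proposal proves neither inequality as written: the upper bound needs the mixed construction with a dominating basis in $n-1$ copies, and the lower bound needs the common-witness argument bounding the number of undominated ``free'' copies outside $U_0$.
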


\begin{proof}
In the family $G\odot{\cal H}$, we have that $V=V(G)$. We first address the proof of $\Sd_A(G \odot {\cal H}) \geq n\cdot\Sd_A({\cal H})+\gamma'(G)$. Let $U$ be a simultaneous adjacency basis of $G \odot {\cal H}$, let $U_i=U \cap V'_i$, and let $U_0 = B \cap V$. By Remark~\ref{BiSimGenHi}, $U_i$ is a simultaneous adjacency generator for ${\cal H}_i$ for every $i \in V$. Consider the partition $\{V_1,V_2,V_3\}$ of $V$, where $V_1$ contains the vertices $i \in V$ such that $U_i$ is a simultaneous adjacency basis of ${\cal H}_i$ but is not a simultaneous dominating set, $V_2$ contains the vertices $i \in V$ such that $U_i$ is a simultaneous adjacency basis and a simultaneous dominating set of ${\cal H}_i$, and $V_3$ is composed by the vertices $i \in V$ such that $U_i$ is not a simultaneous adjacency basis of ${\cal H}_i$.

If $i,j \in V_1$, then there exist a graph $H \in {\cal H}$ and two vertices $v_i \in V'_i-U_i$ and $v_j \in V'_j-U_j$ such that $U_i \cap N_H(v_i)=\emptyset$ and $U_j \cap N_H(v_j)=\emptyset$. Thus, $i \in U_0$ or $j \in U_0$, so $|U_0 \cap V_1| \geq |V_1|-1$. If $i \in V_2$, then there exist $H \in {\cal H}$ and $x \in V'_i$ such that $U_i \subseteq N_H(x)$. In consequence, the pair $i,x$ is not distinguished by any $y \in U_i$, so either $i \in U_0$ or there exists $z \in U_0$ such that $d_{G \odot H,2}(i,z)=1 \neq 2=d_{G \odot H,2}(x,z)$. Therefore, at most one vertex of $G$ is not dominated by $U_0 \cup V_3$, so $|U_0 \cup V_3| \geq \gamma'(G)$. Finally,

\begin{align*}
\Sd_A(G \odot {\cal H})&=\displaystyle{\sum_{i \in V_1 \cup V_2}}|U_i| + \displaystyle{\sum_{i \in V_3}}|U_i| + |U_0|\\
&\geq \displaystyle{\sum_{i \in V_1 \cup V_2}}\Sd_A({\cal H}) + \displaystyle{\sum_{i \in V_3}}(\Sd_A({\cal H})+1) + |U_0|\\
&=n \cdot \Sd_A({\cal H})+|V_3|+|U_0|\\
&\geq n \cdot \Sd_A({\cal H})+|V_3 \cup U_0|\\
&\geq n \cdot \Sd_A({\cal H})+\gamma'(G).
\end{align*}

Now, let $W'$ be a simultaneous adjacency basis of ${\cal H}$ such that $W' \nsubseteq N_H(v)$ for every $H \in {\cal H}$ and every $v \in V-W'$, and assume that for any simultaneous adjacency basis $W$ of ${\cal H}$ which is also a simultaneous dominating set there exist $H' \in {\cal H}$ and $w \in V-W$ such that $W \subseteq N_{H'}(w)$. Let $W''$ be one of such simultaneous adjacency bases of ${\cal H}$. Consider an arbitrary graph $G \odot H \in G \odot{\cal H}$, let $W'_i=W' \cap V'_i$ and $W''_i=W'' \cap V'_i$. Additionally, let $M$ be a minimum dominating set of $G-n$, assuming without loss of generality that  $\gamma'(G)=\gamma(G-n)$, and let $S=M\bigcup W'_n\bigcup\left(\displaystyle{\bigcup_{i\in V-\{n\}}}W''_i\right)$. By analogy to the proof of Theorem~\ref{TheOnlyPosibilitiesDimAdjCorona(c)}, we have that $S$ is an adjacency generator for $G \odot H$. Since $S$ does not depend on the election of $G$ and $H$, it is a simultaneous adjacency generator for $G \odot {\cal H}$. Thus, $\Sd_A(G \odot {\cal H}) \leq \vert S \vert = n\cdot\Sd_A({\cal H})+\gamma'(G)$, so the equality holds.
\end{proof}

Consider the family $\{P_5,C_5\}$, where $C_5$ is obtained from $P_5$ by joining its leaves with an edge. Assume that $V(P_5)=V(C_5)=\{v_1,v_2,v_3,v_4,v_5\}$, $E(P_5)=\{v_1v_2, v_2v_3,$ $ v_3v_4, v_4v_5\}$ and $E(C_5)=E(P_5) \cup \{v_1v_5\}$. We have that the set $\{v_2,v_4\}$ is the sole simultaneous adjacency basis which is also a simultaneous dominating set and $v_3$ satisfies $\{v_2,v_4\} \subseteq N_{P_5}(v_3)$ and $\{v_2,v_4\} \subseteq N_{C_5}(v_3)$. Moreover, the set $\{v_1,v_5\}$ (as well as $\{v_2,v_3\}$ and $\{v_3,v_4\}$) is a simultaneous adjacency basis such that every vertex $v_x$ satisfies $N_{P_5}(v_x) \nsubseteq \{v_1,v_5\}$ and $N_{C_5}(v_x) \nsubseteq \{v_1,v_5\}$. These facts allow us to obtain examples where Theorem~\ref{simAdjDimFamCoronaCase4} applies. For instance, for any connected graph $G$ of order $n \ge 2$, we have that $\Sd_A(G \odot \{P_5,C_5\})=2n+\gamma'(G)$.

\subsection*{The case where the second factor is a family of join graphs}

Given two vertex-disjoint graphs $G=(V_{1},E_{1})$ and $H=(V_{2},E_{2})$, the \emph{join} of $G$ and $H$, denoted by $G+H$, is the graph with vertex set $V(G+H)=V_{1}\cup V_{2}$ and edge set $E(G+H)=E_{1}\cup E_{2}\cup \{uv\,:\,u\in V_{1},v\in V_{2}\}$. For two graph families ${\cal G}$ and ${\cal H}$, defined on common vertex sets $V_1$ and $V_2$, respectively, such that $V_1 \cap V_2 = \emptyset$, we define the family $${\cal G}+{\cal H}=\{G+H:\;G\in{\cal G},H\in{\cal H}\}.$$

In particular, if ${\cal G}=\{G\}$ we will use the notation $G+{\cal H}$. To begin our presentation, we introduce the following auxiliary result.

\begin{lemma}\label{everyBDominating}
Let ${\cal G}$ and ${\cal H}$ be two families of non-trivial graphs on common vertex sets $V_1$ and $V_2$, respectively. Then, every simultaneous adjacency basis of ${\cal G}+{\cal H}$ is a simultaneous dominating set of ${\cal G}+{\cal H}$.
\end{lemma}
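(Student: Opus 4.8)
The plan is to exploit the fact that in any join $G+H$, every vertex of $V_1$ is adjacent to every vertex of $V_2$, so the graph has diameter at most $2$ and, more to the point, vertices from opposite sides of the join dominate each other trivially. First I would fix an arbitrary simultaneous adjacency basis $B$ of ${\cal G}+{\cal H}$ and an arbitrary graph $G+H \in {\cal G}+{\cal H}$, and argue that $B$ must intersect both $V_1$ and $V_2$; indeed, if $B \subseteq V_1$, then any two distinct vertices $x,y \in V_2$ have the same adjacencies into $V_1$ (both are adjacent to all of $V_1$), so no vertex of $B$ distinguishes them, contradicting that $B$ is an adjacency generator for $G+H$ (this uses that both factors are non-trivial, so $|V_2|\ge 2$ — wait, I should be careful here: I only need $B\not\subseteq V_1$, which follows as long as $|V_2|\ge 2$; the non-triviality hypothesis handles the degenerate cases, and if some $|V_i|=1$ the statement is checked directly). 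Symmetrically $B \not\subseteq V_2$, so $B \cap V_1 \neq \emptyset$ and $B \cap V_2 \neq \emptyset$.

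Next I would verify domination. Take any vertex $w \in V(G+H)$; without loss of generality $w \in V_1$. Pick any $s \in B \cap V_2$, which exists by the previous paragraph. Then $s \sim w$ in $G+H$ by the definition of the join, so $w$ is dominated by $B$. The same argument with the roles of $V_1$ and $V_2$ exchanged handles $w \in V_2$. Hence $B$ is a dominating set of $G+H$, and since $G+H$ was arbitrary in the family, $B$ is a simultaneous dominating set of ${\cal G}+{\cal H}$.

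The only genuine subtlety — and the step I would present most carefully — is the claim that $B$ meets both sides of the bipartition. One must be careful that "simultaneous adjacency basis" requires $B$ to be an adjacency generator for \emph{every} member of the family simultaneously, so it suffices to derive the contradiction from a single member $G+H$. The argument $B \subseteq V_1 \Rightarrow$ two vertices of $V_2$ are not distinguished needs $|V_2| \ge 2$; since every $H \in {\cal H}$ is non-trivial this holds, and likewise $|V_1|\ge 2$. (If one allows trivial factors the lemma can fail, e.g.\ $K_1 + K_1 = K_2$, whose adjacency basis $\{u\}$ is not dominating in the relevant sense — but $K_1$ is excluded by hypothesis.) Everything else is a one-line consequence of the join structure, so I expect no real obstacle beyond stating this case analysis cleanly.
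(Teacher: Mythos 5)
Your proposal is correct and follows essentially the same route as the paper: show that a simultaneous adjacency basis must intersect both $V_1$ and $V_2$ (since vertices of one side cannot distinguish two vertices of the other side in a join, and non-triviality gives $|V_1|,|V_2|\ge 2$), and then conclude domination from the fact that every vertex of the join is adjacent to all vertices on the opposite side. The paper phrases the first step slightly more strongly, noting that $B\cap V_2$ is itself a simultaneous adjacency generator for ${\cal H}$ (and symmetrically for ${\cal G}$), but the underlying argument is the same as yours.
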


\begin{proof}
Let $B$ be a simultaneous adjacency basis of ${\cal G}+{\cal H}$, let $W_1=B\cap V_1$ and $W_2=B\cap V_2$. Since no pair of different vertices $u,v \in V_2-W_2$ is distinguished in any $G+H\in{\cal G}+{\cal H}$ by any vertex from $W_1$, we have that $W_2$ is a simultaneous adjacency generator for ${\cal H}$ and, in consequence, $W_2 \neq \emptyset$. By an analogous reasoning we can see that $W_1$ is a simultaneous adjacency generator for ${\cal G}$ and, in consequence, $W_1 \neq \emptyset$. Moreover, every vertex in $V_1$ is dominated by every vertex in $W_2$, whereas every vertex in $V_2$ is dominated by every vertex in $W_1$, so $B$ is a dominating set for every $G+H \in {\cal G}+{\cal H}$.
\end{proof}

The following result, presented in \cite{Ramirez_Estrada_Rodriguez_2015}, characterizes a large number of families of the form ${\cal G}+{\cal H}$ whose simultaneous adjacency bases are formed by the union of simultaneous adjacency bases of ${\cal G}$ and ${\cal H}$.

\begin{theorem}{\rm \cite{Ramirez_Estrada_Rodriguez_2015}}\label{SimAdjDimJoinsFamilyCase1}
Let ${\cal G}$ and ${\cal H}$ be two families of non-trivial graphs on common vertex sets $V_1$ and $V_2$, respectively. If there exists a simultaneous adjacency basis $B$ of ${\cal G}$ such that for every  $G \in {\cal G}$ and every $v\in V_1$, $B\not\subseteq N_G(v)$, then $$\Sd_A({\cal G}+{\cal H})=\Sd_A({\cal G})+\Sd_A({\cal H}).$$
\end{theorem}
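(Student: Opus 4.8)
The plan is to prove the two inequalities $\Sd_A({\cal G}+{\cal H})\ge\Sd_A({\cal G})+\Sd_A({\cal H})$ and $\Sd_A({\cal G}+{\cal H})\le\Sd_A({\cal G})+\Sd_A({\cal H})$ separately. The only structural facts needed about a join $G+H$ are that every vertex of $V_1$ is adjacent to every vertex of $V_2$, while the adjacencies inside $V_1$ (resp. inside $V_2$) coincide with those of $G$ (resp. of $H$); and that $B\subseteq V_1$, $B'\subseteq V_2$ and $V_1\cap V_2=\emptyset$ force the bases of the two factors not to interfere. Note the lower bound needs no extra hypothesis, and the condition $B\not\subseteq N_G(v)$ enters only in the upper bound.

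For the lower bound I would take an arbitrary simultaneous adjacency basis $B$ of ${\cal G}+{\cal H}$, set $W_1=B\cap V_1$ and $W_2=B\cap V_2$, and run the argument already used inside the proof of Lemma~\ref{everyBDominating}: given $G\in{\cal G}$, pick any $H\in{\cal H}$ and two distinct vertices $x,y\in V_1-W_1$ (which lie outside $B$ since $V_1\cap V_2=\emptyset$); a separating vertex from $B$ cannot lie in $W_2\subseteq V_2$, because every such vertex is adjacent to both $x$ and $y$ in $G+H$, so it must lie in $W_1$, and since adjacency inside $V_1$ is unchanged by the join, $W_1$ separates $x,y$ in $G$ itself. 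Hence $W_1$ is a simultaneous adjacency generator for ${\cal G}$, and symmetrically $W_2$ is one for ${\cal H}$, giving $|B|=|W_1|+|W_2|\ge\Sd_A({\cal G})+\Sd_A({\cal H})$.

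For the upper bound I would take the distinguished simultaneous adjacency basis $B$ of ${\cal G}$ granted by the hypothesis together with any simultaneous adjacency basis $B'$ of ${\cal H}$, and show $B\cup B'$ is a simultaneous adjacency generator for ${\cal G}+{\cal H}$. Fix $G+H\in{\cal G}+{\cal H}$ and distinct $x,y\notin B\cup B'$. If $x,y\in V_1$, some $b\in B$ separates them in $G$, hence also in $G+H$; if $x,y\in V_2$, some $b'\in B'$ does. If $x$ and $y$ lie in different factors, say $x\in V_1$ (so $x\notin B$) and $y\in V_2$, then every $b\in B\subseteq V_1$ is adjacent to $y$ in $G+H$, while the hypothesis $B\not\subseteq N_G(x)$ provides a $b\in B$ with $b\notin N_G(x)$, hence $b\not\sim x$ in $G+H$; this $b$ separates $x$ and $y$. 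The case $x\in V_2$, $y\in V_1$ is identical after renaming, again using the hypothesis on ${\cal G}$. Since $B\cap B'=\emptyset$, this yields $\Sd_A({\cal G}+{\cal H})\le|B|+|B'|=\Sd_A({\cal G})+\Sd_A({\cal H})$, and the two inequalities give the equality.

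The one genuine obstacle is the mixed-pair case of the upper bound, which is exactly where the hypothesis is indispensable: a candidate separator $b\in B$ is automatically adjacent to whichever of $x,y$ lies in $V_2$, so the only way $b$ can separate the pair is by missing the vertex in $V_1$, and $B\not\subseteq N_G(v)$ is precisely the guarantee that such a $b$ always exists. Everything else is bookkeeping about membership in $B\cup B'$ and the invariance of within-part adjacencies under the join.
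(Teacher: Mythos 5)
Your proof is correct and follows essentially the same route as the cited argument the paper relies on: the lower bound by intersecting an arbitrary simultaneous adjacency basis of ${\cal G}+{\cal H}$ with $V_1$ and $V_2$ (join edges cannot separate two vertices on the same side, so the traces are simultaneous adjacency generators of the factors), and the upper bound by taking the union of the special basis $B$ of ${\cal G}$ with any simultaneous adjacency basis of ${\cal H}$, the hypothesis $B\not\subseteq N_G(v)$ being used exactly for the mixed pairs. This matches the decomposition the paper itself alludes to after the theorem, so no further comparison is needed.
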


As discussed in the proof of Theorem~\ref{SimAdjDimJoinsFamilyCase1}, every simultaneous adjacency basis of a family ${\cal G}+{\cal H}$ satisfying the premises of the theorem is the union of a simultaneous adjacency basis of ${\cal H}$ and a simultaneous adjacency basis $B$ of ${\cal G}$ such that $B \nsubseteq N_G(v)$ for every $G \in {\cal G}$ and every $v \in V_1$.

\begin{theorem}\label{GodotJoinCase1}
Let ${\cal G}$ be a family of connected non-trivial graphs on a common vertex set $V$, and let ${\cal H}$ and ${\cal H}'$ be families of non-trivial graphs on common vertex sets $V'_1$ and $V'_2$, respectively. If there exist a simultaneous adjacency basis $B$ of ${\cal H}$ that satisfies $B \nsubseteq N_H(v)$ for every $H \in {\cal H}$ and every $v \in V'_1$, and a simultaneous adjacency basis $B'$ of ${\cal H}'$ that satisfies $B' \nsubseteq N_{H'}(v')$ for every $H' \in {\cal H}'$ and every $v' \in V'_2$, then $$\Sd_A({\cal G} \odot ({\cal H}+{\cal H}'))=|V| \cdot \Sd_A({\cal H})+|V| \cdot \Sd_A({\cal H}').$$
\end{theorem}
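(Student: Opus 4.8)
The plan is to reduce the statement to Theorem~\ref{simAdjDimFamCoronaCase1} applied to the family ${\cal G}\odot{\cal F}$, where ${\cal F}={\cal H}+{\cal H}'$ is the family of join graphs defined on the common vertex set $V'_1\cup V'_2$. Concretely, I would first show that ${\cal F}$ admits a simultaneous adjacency basis that is simultaneously a dominating set of every member of ${\cal F}$ and is not contained in the open neighbourhood of any vertex of any member of ${\cal F}$. Once this is established, Theorem~\ref{simAdjDimFamCoronaCase1} yields $\Sd_A({\cal G}\odot{\cal F})=|V|\cdot\Sd_A({\cal F})$, while Theorem~\ref{SimAdjDimJoinsFamilyCase1} (whose hypothesis is exactly the existence of the basis $B$ of ${\cal H}$ with $B\nsubseteq N_H(v)$) gives $\Sd_A({\cal F})=\Sd_A({\cal H})+\Sd_A({\cal H}')$, and combining these two equalities produces the claimed formula.

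The candidate basis is $C=B\cup B'$, where $B$ and $B'$ are the simultaneous adjacency bases supplied by the hypothesis. Since $V'_1\cap V'_2=\emptyset$, this union is disjoint and $|C|=\Sd_A({\cal H})+\Sd_A({\cal H}')$. I would then argue that $C$ is a simultaneous adjacency generator for ${\cal F}$: in a graph $H+H'\in{\cal F}$, a pair of vertices lying both in $V'_1$ (respectively both in $V'_2$) is distinguished by an element of $B$ (respectively $B'$), since adjacency inside each factor is unchanged by the join; and a ``mixed'' pair $u\in V'_1$, $w\in V'_2$ is distinguished by any vertex of $B$ lying outside $N_H(u)$, which exists precisely because $B\nsubseteq N_H(u)$. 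This is the same construction that underlies the proof of Theorem~\ref{SimAdjDimJoinsFamilyCase1}. Since $|C|=\Sd_A({\cal H})+\Sd_A({\cal H}')=\Sd_A({\cal F})$, the generator $C$ is in fact a simultaneous adjacency basis of ${\cal F}$, and by Lemma~\ref{everyBDominating} it is automatically a simultaneous dominating set of every member of ${\cal F}$.

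It remains to verify that $C\nsubseteq N_F(v)$ for every $F=H+H'\in{\cal F}$ and every $v\in V'_1\cup V'_2$. If $v\in V'_1$, then $N_F(v)\cap V'_1=N_H(v)$, so $C\subseteq N_F(v)$ would force $B=C\cap V'_1\subseteq N_H(v)$, contradicting the hypothesis on $B$; the case $v\in V'_2$ is symmetric, using the hypothesis on $B'$. Thus $C$ satisfies all the premises of Theorem~\ref{simAdjDimFamCoronaCase1} (with ${\cal H}$ there replaced by ${\cal F}$), and the argument closes as described above.

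The only point requiring genuine care is the claim that $C$ is a bona fide simultaneous adjacency basis of the join family ${\cal F}$ — that the ``mixed'' pairs are really resolved and that no set of size smaller than $\Sd_A({\cal H})+\Sd_A({\cal H}')$ can work; both facts are already delivered by Theorem~\ref{SimAdjDimJoinsFamilyCase1} and the remark following it, so the residual work is just the short neighbourhood bookkeeping of the previous paragraph. Everything else is a direct application of the cited results.
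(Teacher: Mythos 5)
Your proposal is correct and follows essentially the same route as the paper: take $S=B\cup B'$, use Theorem~\ref{SimAdjDimJoinsFamilyCase1} (and its proof) to see that $S$ is a simultaneous adjacency basis of ${\cal H}+{\cal H}'$ with $\Sd_A({\cal H}+{\cal H}')=\Sd_A({\cal H})+\Sd_A({\cal H}')$, check that $S\nsubseteq N_{H+H'}(x)$ for every vertex $x$, invoke Lemma~\ref{everyBDominating} for simultaneous domination, and conclude via Theorem~\ref{simAdjDimFamCoronaCase1}. Your neighbourhood bookkeeping for the non-containment condition is the same observation the paper makes, so no gaps remain.
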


\begin{proof}
Let $B$ and $B'$ be simultaneous adjacency bases of ${\cal H}$ and ${\cal H}'$, respectively, that satisfy the premises of the theorem, and let $S=B\cup B'$. As shown in the proof of Theorem~\ref{SimAdjDimJoinsFamilyCase1}, $S$ is a simultaneous adjacency basis of ${\cal H}+{\cal H}'$. Moreover, since $B \nsubseteq N_H(v)$ for every $H \in {\cal H}$ and every $v \in V'_1$, and $B' \nsubseteq N_{H'}(v')$ for every $H' \in {\cal H}'$ and every $v' \in V'_2$, we have that $S \nsubseteq N_{H+H'}(x)$ for every $H+H' \in {\cal H}+{\cal H}'$ and every $x \in V'_1 \cup V'_2$. Finally, by Lemma~\ref{everyBDominating}, we have that $S$ is a simultaneous dominating set of ${\cal H}+{\cal H}'$, so $\Sd_A({\cal G} \odot ({\cal H}+{\cal H}'))=|V| \cdot \Sd_A({\cal H}+{\cal H}')=|V| \cdot \Sd_A({\cal H})+|V| \cdot \Sd_A({\cal H}')$ by Theorems~\ref{simAdjDimFamCoronaCase1} and~\ref{SimAdjDimJoinsFamilyCase1}.
\end{proof}

The following result is a direct consequence of Lemma~\ref{LemmaDiameter>6orPathorCycle} and Theorem~\ref{GodotJoinCase1}.

\begin{proposition}\label{simAdjDimFamCoronaCase1GenPartCase2}
Let ${\cal G}$ be a family of connected non-trivial graphs on a common vertex set $V$. Let ${\cal H}$ be a graph family on a common vertex set $V'_1$ of cardinality $|V'_1|\ge 7$ such that every $H \in {\cal H}$ is a path graph, a cycle graph, $D(H)\ge 6$, or $\mathtt{g}(H) \ge 5$ and $\delta(H) \ge 3$. Let ${\cal H}'$ be a graph family on a common vertex set $V'_2$ of cardinality $|V'_2|\ge 7$ satisfying the same conditions as ${\cal H}$.  Then, $$\Sd_A({\cal G} \odot ({\cal H}+{\cal H}'))=|V| \cdot \Sd_A({\cal H})+|V| \cdot \Sd_A({\cal H}').$$
\end{proposition}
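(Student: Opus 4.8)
The plan is to verify that the families $\mathcal{H}$ and $\mathcal{H}'$ satisfy the hypotheses of Theorem~\ref{GodotJoinCase1}, after which the conclusion is immediate. Concretely, it suffices to exhibit a simultaneous adjacency basis $B$ of $\mathcal{H}$ with $B \nsubseteq N_H(v)$ for every $H \in \mathcal{H}$ and every $v \in V'_1$, and likewise a simultaneous adjacency basis $B'$ of $\mathcal{H}'$ with the analogous property on $V'_2$; then Theorem~\ref{GodotJoinCase1} yields $\Sd_A({\cal G} \odot ({\cal H}+{\cal H}'))=|V| \cdot \Sd_A({\cal H})+|V| \cdot \Sd_A({\cal H}')$.

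First I would observe that, by Lemma~\ref{LemmaDiameter>6orPathorCycle}, \emph{every} adjacency generator $B$ of a graph $H$ satisfying ``$H \in \{P_n,C_n\}$ with $n \ge 7$, or $D(H) \ge 6$, or $\mathtt{g}(H) \ge 5$ and $\delta(H) \ge 3$'' has the property $B \nsubseteq N_H(v)$ for all $v \in V(H)$. Since $|V'_1| \ge 7$, every member $H \in \mathcal{H}$ falls into one of these categories (note that a path or cycle on the common vertex set $V'_1$ has order $|V'_1| \ge 7$, so the hypotheses of Lemma~\ref{LemmaDiameter>6orPathorCycle} are met). Now take any simultaneous adjacency basis $B$ of $\mathcal{H}$; by definition $B$ is an adjacency generator for each $H \in \mathcal{H}$, hence Lemma~\ref{LemmaDiameter>6orPathorCycle} gives $B \nsubseteq N_H(v)$ for every $H \in \mathcal{H}$ and every $v \in V'_1 \supseteq V'_1 - B$. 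The same argument applied to $\mathcal{H}'$, using $|V'_2| \ge 7$, produces a simultaneous adjacency basis $B'$ of $\mathcal{H}'$ with $B' \nsubseteq N_{H'}(v')$ for every $H' \in \mathcal{H}'$ and every $v' \in V'_2$.

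With $B$ and $B'$ in hand, the premises of Theorem~\ref{GodotJoinCase1} are satisfied verbatim, and the stated formula follows. The only mild subtlety — and the point I would be careful to spell out — is the reduction from ``$B$ is a simultaneous adjacency basis of $\mathcal{H}$'' to ``$B$ is an adjacency generator for each $H \in \mathcal{H}$'': this is immediate from the definition of simultaneous adjacency generator, but it is what lets us invoke the single-graph statement of Lemma~\ref{LemmaDiameter>6orPathorCycle} for each member of the family separately. There is no genuine obstacle here; the proposition is essentially a bookkeeping corollary combining Lemma~\ref{LemmaDiameter>6orPathorCycle} (to certify the non-domination-of-a-single-vertex-by-its-neighbourhood condition uniformly over the family) with Theorem~\ref{GodotJoinCase1} (to assemble the corona-of-a-join formula). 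I would present it in three lines: check the hypotheses of Theorem~\ref{GodotJoinCase1} via Lemma~\ref{LemmaDiameter>6orPathorCycle}, then quote Theorem~\ref{GodotJoinCase1}.
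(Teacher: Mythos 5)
Your proof is correct and follows exactly the route the paper intends: the paper states this proposition as a direct consequence of Lemma~\ref{LemmaDiameter>6orPathorCycle} (which guarantees that every adjacency generator, hence every simultaneous adjacency basis restricted to each member, fails to be contained in any open neighbourhood) together with Theorem~\ref{GodotJoinCase1}. Your careful note that a simultaneous adjacency basis is an adjacency generator for each member of the family, so the single-graph lemma applies uniformly, is precisely the bookkeeping the paper leaves implicit.
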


In addition, following a reasoning analogous to that of the proofs of Propositions~\ref{simAdjDimFamCoronaCase1GenPartCase1} and~\ref{simAdjDimFamCoronaCase2GenPartCase1}, we obtain the following result as a consequence of Lemma~\ref{LemmaDiameter>6orPathorCycle} and Theorems~\ref{dimAPermFamily} and~\ref{GodotJoinCase1}.

\begin{proposition}\label{simAdjDimFamCoronaCase1GenPartCase3}
Let ${\cal G}$ be a family of connected non-trivial graphs on a common vertex set $V$. Let $H$ be a graph of order $n \ge 7$ which is a path graph, or a cycle graph, or satisfies $D(H)\ge 6$, or $\mathtt{g}(H) \ge 5$ and $\delta(H) \ge 3$. Let $H'$ be a graph of order $n' \ge 7$ that satisfies the same conditions as $H$. Let $B$ and $B'$ be adjacency bases of $H$ and $H'$, respectively. Then, for any pair of families ${\cal H} \subseteq {\cal G}_B(H)$ and ${\cal H}' \subseteq {\cal G}_{B'}(H')$ such that $H \in {\cal H}$ and $H' \in {\cal H}'$, $$\Sd_A({\cal G} \odot({\cal H}+{\cal H}'))=|V| \cdot \dim_A(H)+|V| \cdot \dim_A(H').$$
\end{proposition}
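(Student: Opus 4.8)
The plan is to reduce the statement to Theorem~\ref{GodotJoinCase1} by verifying that $H$ and $H'$, together with the chosen adjacency bases, furnish simultaneous adjacency bases of the associated permutation families that are not contained in any open neighbourhood. First I would invoke Theorem~\ref{dimAPermFamily}: since $B$ is an adjacency basis of $H$ and $H \in {\cal H} \subseteq {\cal G}_B(H)$, the set $B$ is a simultaneous adjacency basis of ${\cal H}$ with $\Sd_A({\cal H})=\dim_A(H)$; symmetrically, $B'$ is a simultaneous adjacency basis of ${\cal H}'$ with $\Sd_A({\cal H}')=\dim_A(H')$. This is the step that converts the hypotheses about single graphs $H,H'$ into the family-level hypotheses needed downstream.

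Next I would establish the non-domination-by-a-neighbourhood condition. By Lemma~\ref{LemmaDiameter>6orPathorCycle}, because $H$ is a path or cycle on $n\ge 7$ vertices, or has $D(H)\ge 6$, or has $\mathtt{g}(H)\ge 5$ and $\delta(H)\ge 3$, every adjacency generator $B$ of $H$ satisfies $B\nsubseteq N_H(v)$ for every $v\in V(H)$. I then need to transfer this to every $H^\ast\in{\cal H}$: by the definition of ${\cal G}_B(H)$, for every $H^\ast\in{\cal H}$ there is a permutation $f\in{\cal S}(B)$ with $N_{H^\ast}(x)=f(N_H(x))$ for all $x\in B$, and since $f$ fixes $B$ pointwise we get $B\cap N_{H^\ast}(v)=B\cap N_H(v)$ for every vertex $v$ (using that $\langle B_{H^\ast}\rangle_w\cong\langle B_H\rangle_w$ with $B$ fixed), hence $B\nsubseteq N_{H^\ast}(v)$ for every $H^\ast\in{\cal H}$ and every $v\in V'_1$. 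The identical argument applied to $B'$, $H'$, ${\cal H}'$ gives $B'\nsubseteq N_{H'^\ast}(v')$ for every $H'^\ast\in{\cal H}'$ and every $v'\in V'_2$. This is the step requiring the most care, since one must be precise about how fixing $B$ under $f$ forces the restricted neighbourhoods to coincide; this is exactly the mechanism behind Propositions~\ref{simAdjDimFamCoronaCase1GenPartCase1} and~\ref{simAdjDimFamCoronaCase2GenPartCase1}, which I would cite as the template.

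Finally, with both non-inclusion conditions verified, the families ${\cal H}$ and ${\cal H}'$ satisfy precisely the hypotheses of Theorem~\ref{GodotJoinCase1}, which yields
$$\Sd_A({\cal G}\odot({\cal H}+{\cal H}'))=|V|\cdot\Sd_A({\cal H})+|V|\cdot\Sd_A({\cal H}').$$
Substituting $\Sd_A({\cal H})=\dim_A(H)$ and $\Sd_A({\cal H}')=\dim_A(H')$ from the first step gives the claimed formula. The main obstacle, as noted, is the careful bookkeeping in the second paragraph: one must make sure that "$B$ is an adjacency generator of $H^\ast$ with $B\nsubseteq N_{H^\ast}(v)$" genuinely follows from membership in ${\cal G}_B(H)$ rather than only for $H$ itself, and that Lemma~\ref{LemmaDiameter>6orPathorCycle} is applied to the right underlying graph ($H$, whose weakly $B$-induced subgraph is shared by all members of ${\cal H}$); everything else is a direct citation.
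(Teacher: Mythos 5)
Your proposal is correct and follows exactly the route the paper intends: the paper only sketches this proposition as a consequence of Lemma~\ref{LemmaDiameter>6orPathorCycle} and Theorems~\ref{dimAPermFamily} and~\ref{GodotJoinCase1}, "by analogy" with Propositions~\ref{simAdjDimFamCoronaCase1GenPartCase1} and~\ref{simAdjDimFamCoronaCase2GenPartCase1}, and you have filled in precisely those steps (including the transfer of the non-inclusion condition $B \nsubseteq N_{H^\ast}(v)$ to all members of ${\cal G}_B(H)$, which is the same mechanism the paper uses). The only cosmetic remark is that the equality is more precisely $B\cap N_{H^\ast}(f(v))=B\cap N_H(v)$, but since $f$ permutes the vertex set this yields the needed conclusion, and the paper itself states it in the same loose form.
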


\begin{theorem}\label{GodotJoinCase2}
Let ${\cal G}$ be a family of connected non-trivial graphs on a common vertex set $V$, and let ${\cal H}$ and ${\cal H}'$ be families of non-trivial graphs on common vertex sets $V'_1$ and $V'_2$, respectively. If there exists a simultaneous adjacency basis $B$ of ${\cal H}$ that satisfies $B \nsubseteq N_H(v)$ for every $H \in {\cal H}$ and every $v \in V'_1$, and for every simultaneous adjacency basis $B'$ of ${\cal H}'$ there exist $H' \in {\cal H}$ and $v' \in V'_2$ such that $B' \subseteq N_{H'}(v')$, then $$\Sd_A({\cal G} \odot ({\cal H}+{\cal H}'))=|V| \cdot \Sd_A({\cal H})+|V| \cdot \Sd_A({\cal H}')+\Sgamma({\cal G}).$$
\end{theorem}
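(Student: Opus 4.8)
The strategy is to combine Theorem~\ref{simAdjDimFamCoronaCase3} with Theorem~\ref{SimAdjDimJoinsFamilyCase1}, exactly as Theorem~\ref{GodotJoinCase1} combined Theorem~\ref{simAdjDimFamCoronaCase1} with Theorem~\ref{SimAdjDimJoinsFamilyCase1}. The point is that $\mathcal H+\mathcal H'$ must be shown to satisfy the hypotheses of Theorem~\ref{simAdjDimFamCoronaCase3}: namely, (a) some simultaneous adjacency basis of $\mathcal H+\mathcal H'$ is a simultaneous dominating set, and (b) \emph{every} simultaneous adjacency basis $S$ of $\mathcal H+\mathcal H'$ has a vertex $w$ outside $S$ with $S\subseteq N_{H+H'}(w)$ for some member. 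Then $\Sd_A(\mathcal G\odot(\mathcal H+\mathcal H'))=|V|\cdot\Sd_A(\mathcal H+\mathcal H')+\Sgamma(\mathcal G)$, and Theorem~\ref{SimAdjDimJoinsFamilyCase1} (whose hypothesis is met by the given basis $B$ of $\mathcal H$) gives $\Sd_A(\mathcal H+\mathcal H')=\Sd_A(\mathcal H)+\Sd_A(\mathcal H')$, yielding the claimed formula.

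First I would verify (a): Lemma~\ref{everyBDominating} says \emph{every} simultaneous adjacency basis of $\mathcal H+\mathcal H'$ is a simultaneous dominating set, so in particular one exists and (a) holds trivially. Next, the structural description following Theorem~\ref{SimAdjDimJoinsFamilyCase1} tells us that every simultaneous adjacency basis of $\mathcal H+\mathcal H'$ has the form $S=W\cup W'$, where $W\subseteq V'_1$ is a simultaneous adjacency basis of $\mathcal H$ and $W'\subseteq V'_2$ is a simultaneous adjacency basis of $\mathcal H'$ (here I use that $B$ witnesses the hypothesis of Theorem~\ref{SimAdjDimJoinsFamilyCase1}; strictly one should note the roles of $\mathcal H,\mathcal H'$ may need to be swapped to match that theorem's asymmetric statement, but the conclusion $\Sd_A(\mathcal H+\mathcal H')=\Sd_A(\mathcal H)+\Sd_A(\mathcal H')$ and the product decomposition of bases hold either way). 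Given such an $S=W\cup W'$, apply the hypothesis on $\mathcal H'$: there exist $H'\in\mathcal H'$ and $v'\in V'_2$ with $W'\subseteq N_{H'}(v')$. Now pick any $H\in\mathcal H$ and form $H+H'\in\mathcal H+\mathcal H'$. In $H+H'$ every vertex of $V'_1$ is adjacent to $v'$, so $W\subseteq N_{H+H'}(v')$; together with $W'\subseteq N_{H'}(v')\subseteq N_{H+H'}(v')$ this gives $S=W\cup W'\subseteq N_{H+H'}(v')$, and $v'\notin S$ since $v'\notin W'$. This establishes (b).

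Having verified both hypotheses of Theorem~\ref{simAdjDimFamCoronaCase3} for the family $\mathcal H+\mathcal H'$, we conclude
$$\Sd_A(\mathcal G\odot(\mathcal H+\mathcal H'))=|V|\cdot\Sd_A(\mathcal H+\mathcal H')+\Sgamma(\mathcal G),$$
and substituting $\Sd_A(\mathcal H+\mathcal H')=\Sd_A(\mathcal H)+\Sd_A(\mathcal H')$ from Theorem~\ref{SimAdjDimJoinsFamilyCase1} gives the desired identity.

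The main obstacle is the bookkeeping around Theorem~\ref{SimAdjDimJoinsFamilyCase1}'s asymmetry: that theorem's hypothesis is phrased on the \emph{first} family of a join, so one must be careful about whether it is $\mathcal H$ or $\mathcal H'$ that plays the role of ``$\mathcal G$'' there, and correspondingly make sure the $w$ produced in step (b) really lies outside the \emph{whole} basis $S$, not merely outside the $\mathcal H'$-part. The clean resolution is to observe that the vertex $v'$ furnished by the hypothesis on $\mathcal H'$ lies in $V'_2$, hence cannot lie in $W\subseteq V'_1$, and lies outside $W'$ by assumption, so $v'\notin S$ regardless of which family is nominally ``first''; and adjacency of $v'$ to all of $V'_1$ in the join is automatic. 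Everything else is a direct citation of the two main theorems.
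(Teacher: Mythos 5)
Your proposal is correct and follows essentially the same route as the paper: decompose any simultaneous adjacency basis $S$ of ${\cal H}+{\cal H}'$ as $W\cup W'$ via the discussion after Theorem~\ref{SimAdjDimJoinsFamilyCase1}, use the hypothesis on ${\cal H}'$ to produce $v'$ with $S\subseteq N_{H+H'}(v')$, invoke Lemma~\ref{everyBDominating} for domination, and then apply Theorems~\ref{simAdjDimFamCoronaCase3} and~\ref{SimAdjDimJoinsFamilyCase1}. Your extra care about the asymmetry of Theorem~\ref{SimAdjDimJoinsFamilyCase1} and the fact that $v'\notin S$ is a fine (and slightly more explicit) rendering of what the paper leaves implicit.
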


\begin{proof}
Let $S$ be a simultaneous adjacency basis of ${\cal H}+{\cal H}'$, let $W=S \cap V'_1$ and let $W'=S \cap V'_2$. As discussed in the proof of Theorem~\ref{SimAdjDimJoinsFamilyCase1}, $W$ and $W'$ are simultaneous adjacency bases of ${\cal H}$ and ${\cal H}'$, respectively. Since there exist $H' \in {\cal H}$ and $v' \in V'_2$ such that $W' \subseteq N_{H'}(v')$, we have that $S \subseteq N_{H+H'}(v')$ for any $H \in {\cal H}$ by the definition of the join operation. Moreover, by Lemma~\ref{everyBDominating}, $S$ is a simultaneous dominating set of ${\cal H}+{\cal H}'$, so $\Sd_A({\cal G} \odot ({\cal H}+{\cal H}'))=|V| \cdot \Sd_A({\cal H}+{\cal H}')+\Sgamma({\cal G})=|V| \cdot \Sd_A({\cal H})+|V| \cdot \Sd_A({\cal H}')+\Sgamma({\cal G})$ by Theorems~\ref{simAdjDimFamCoronaCase3} and~\ref{SimAdjDimJoinsFamilyCase1}.
\end{proof}

The following results are particular cases of Theorem~\ref{GodotJoinCase2}.

\begin{proposition}\label{simAdjDimFamCoronaCase3FamGPartCase2}
Let ${\cal G}$ be a family of connected non-trivial graphs on a common vertex set $V$. Let ${\cal H}$ be a graph family on a common vertex set $V'$ of cardinality $|V'|\ge 7$ such that every $H \in {\cal H}$ is a path graph, a cycle graph, $D(H)\ge 6$, or $\mathtt{g}(H) \ge 5$ and $\delta(H) \ge 3$. Let $K_t$ be a complete graph of order $t \ge 2$.  Then, $$\Sd_A({\cal G} \odot (K_t+{\cal H}))=|V| \cdot \Sd_A({\cal H})+|V|\cdot(t-1)+\Sgamma({\cal G}).$$
\end{proposition}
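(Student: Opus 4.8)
The plan is to obtain this statement as an instance of Theorem~\ref{GodotJoinCase2}, choosing carefully which of the two join factors plays which role, since the two hypotheses of that theorem are not symmetric. Concretely, I would let ${\cal H}$ (the family of paths, cycles, large-diameter, or large-girth graphs) play the role of the \emph{first} factor and the singleton family $\{K_t\}$ play the role of the \emph{second} factor. Because the join operation on a common vertex set is commutative, ${\cal H}+\{K_t\}=\{H+K_t:H\in{\cal H}\}=K_t+{\cal H}$, so $\Sd_A({\cal G}\odot(K_t+{\cal H}))=\Sd_A({\cal G}\odot({\cal H}+\{K_t\}))$ and Theorem~\ref{GodotJoinCase2} is applicable once its two hypotheses are verified; note that $t\ge 2$ guarantees $K_t$ is non-trivial, as required there.

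For the first factor, I would argue that the family ${\cal H}$ admits a simultaneous adjacency basis $B$, and that $B$ is in particular an adjacency generator of every $H\in{\cal H}$. Since $|V'|\ge 7$ and each $H\in{\cal H}$ is a path, a cycle, a graph with $D(H)\ge 6$, or a graph with $\mathtt{g}(H)\ge 5$ and $\delta(H)\ge 3$, Lemma~\ref{LemmaDiameter>6orPathorCycle} applies to each such $H$ and gives $B\nsubseteq N_H(v)$ for every $v\in V'$. Hence any simultaneous adjacency basis of ${\cal H}$ witnesses the first hypothesis of Theorem~\ref{GodotJoinCase2}.

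For the second factor, I would note that all vertices of $K_t$ are pairwise true twins, so no vertex distinguishes two others; thus every adjacency generator of $K_t$ omits at most one vertex, giving $\Sd_A(\{K_t\})=\dim_A(K_t)=t-1$, and every adjacency basis of $K_t$ has the form $B'=V(K_t)\setminus\{w\}$. For such $B'$ we have $N_{K_t}(w)=V(K_t)\setminus\{w\}=B'$, so $B'\subseteq N_{K_t}(w)$, which verifies the second hypothesis. Theorem~\ref{GodotJoinCase2} then yields $\Sd_A({\cal G}\odot(K_t+{\cal H}))=|V|\cdot\Sd_A({\cal H})+|V|\cdot\Sd_A(\{K_t\})+\Sgamma({\cal G})=|V|\cdot\Sd_A({\cal H})+|V|\cdot(t-1)+\Sgamma({\cal G})$.

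The argument is short, and the only genuine subtlety is the choice of roles: $K_t$ must \emph{not} be placed in the first slot, because an adjacency basis of $K_t$ always equals (hence is contained in) an open neighbourhood, which is precisely the configuration the first hypothesis of Theorem~\ref{GodotJoinCase2} forbids. The remaining verifications — that the structural hypotheses on ${\cal H}$ transfer to the simultaneous setting via Lemma~\ref{LemmaDiameter>6orPathorCycle}, and the computation $\dim_A(K_t)=t-1$ — are routine.
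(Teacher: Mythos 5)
Your proposal is correct and follows essentially the same route as the paper: both verify the first hypothesis of Theorem~\ref{GodotJoinCase2} for ${\cal H}$ via Lemma~\ref{LemmaDiameter>6orPathorCycle} and the second hypothesis via the fact that every adjacency basis of $K_t$ has the form $V(K_t)-\{w\}\subseteq N_{K_t}(w)$, then read off the formula with $\Sd_A(\{K_t\})=\dim_A(K_t)=t-1$. Your explicit remarks about commutativity of the join and the asymmetry of the two slots only make precise what the paper leaves implicit, so there is no substantive difference.
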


\begin{proof}
By Theorem~\ref{SimAdjDimJoinsFamilyCase1}, $\Sd_A(K_t+{\cal H})=\Sd_A({\cal H})+t-1$. Moreover, by Lemma~\ref{LemmaDiameter>6orPathorCycle}, every simultaneous adjacency basis $B$ of ${\cal H}$ satisfies $B \nsubseteq N_H(v)$ for every $H \in {\cal H}$ and every $v \in V'$. Furthermore, every adjacency basis of $K_t$ has the form $B'=V(K_t)-\{v\}$, where $v$ is an arbitrary vertex of $K_t$. Clearly, $B' \subseteq N_{K_t}(v)$, so the result follows from Theorem~\ref{GodotJoinCase2}.
\end{proof}

Following a reasoning analogous to that of the proofs of Propositions~\ref{simAdjDimFamCoronaCase1GenPartCase1} and~\ref{simAdjDimFamCoronaCase2GenPartCase1},
we obtain the following result as a consequence of Lemma~\ref{LemmaDiameter>6orPathorCycle} and Theorems~\ref{dimAPermFamily}, \ref{SimAdjDimJoinsFamilyCase1} and~\ref{GodotJoinCase2}. 

\begin{proposition}\label{simAdjDimFamCoronaCase3FamGPartCase3}
Let ${\cal G}$ be a family of connected non-trivial graphs on a common vertex set $V$. Let $H$ be a graph of order $n \ge 7$ which is a path graph, or a cycle graph, or satisfies $D(H)\ge 6$, or $\mathtt{g}(H) \ge 5$ and $\delta(H) \ge 3$. Let $K_t$ be a complete graph of order $t\ge 1$. Let $B$ be an adjacency basis of $H$. Then, for any family ${\cal H} \subseteq {\cal G}_B(H)$ such that $H \in {\cal H}$, $$\Sd_A({\cal G} \odot (K_t+{\cal H}))=|V| \cdot \dim_A(H)+|V|\cdot (t-1)+\Sgamma({\cal G}).$$
\end{proposition}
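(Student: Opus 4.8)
The plan is to reduce the statement to Theorem~\ref{GodotJoinCase2} by verifying that the family ${\cal H} \subseteq {\cal G}_B(H)$ and the complete graph $K_t$ satisfy the hypotheses of that theorem, using the structural facts about ${\cal G}_B(H)$ established in Theorem~\ref{dimAPermFamily} together with Lemma~\ref{LemmaDiameter>6orPathorCycle}. Throughout, set $V'$ to be the common vertex set of ${\cal H}$ (so $|V'|=n$), and observe that the role of ${\cal H}'$ in Theorem~\ref{GodotJoinCase2} will be played by ${\cal H}$, while the role of ${\cal H}$ there is played by $K_t$.

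First I would record that $\Sd_A({\cal H})=\dim_A(H)$: since $H \in {\cal H} \subseteq {\cal G}_B(H)$ and $B$ is an adjacency basis of $H$, Theorem~\ref{dimAPermFamily} gives that $B$ is a simultaneous adjacency basis of ${\cal H}$ and $\Sd_A({\cal H})=\dim_A(H)$. Next, by Theorem~\ref{SimAdjDimJoinsFamilyCase1} applied with ${\cal G}$ there being $K_t$ (whose unique adjacency bases are the sets $V(K_t)-\{v\}$, none of which is contained in the neighbourhood of the omitted vertex — indeed $V(K_t)-\{v\} \subseteq N_{K_t}(v)$, so that hypothesis fails, and I must instead take the side with the non-domination property to be ${\cal H}$), we need a simultaneous adjacency basis $B'$ of ${\cal H}$ with $B' \nsubseteq N_{H'}(v')$ for every $H' \in {\cal H}$ and every $v' \in V'$. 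I would take $B'=B$: by Lemma~\ref{LemmaDiameter>6orPathorCycle}, since $H$ is a path, a cycle, or has $D(H)\ge 6$, or has $\mathtt{g}(H)\ge 5$ and $\delta(H)\ge 3$ (each with $n\ge 7$), we get $B \nsubseteq N_H(v)$ for every $v \in V(H)$; and by the definition of ${\cal G}_B(H)$ one has $B \cap N_{H'}(v') = B \cap N_H(v')$ for every $H' \in {\cal H}$ and every $v' \in V'$, hence $B \nsubseteq N_{H'}(v')$ as well. Thus the "non-domination" hypothesis of Theorem~\ref{GodotJoinCase2} holds for the family ${\cal H}$ (playing the role of ${\cal H}$ in that theorem's statement), and $\Sd_A(K_t+{\cal H})=\Sd_A({\cal H})+(t-1)=\dim_A(H)+t-1$ by Theorem~\ref{SimAdjDimJoinsFamilyCase1}.

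For the remaining hypothesis of Theorem~\ref{GodotJoinCase2} — that for \emph{every} simultaneous adjacency basis $B'$ of the other join factor $K_t$ there exist a graph in that factor's family and a vertex $v'$ with $B' \subseteq N(v')$ — I would simply note that every adjacency basis of $K_t$ has the form $V(K_t)-\{v\}$ for some vertex $v$, and $V(K_t)-\{v\} \subseteq N_{K_t}(v)$ trivially, so this holds. Applying Theorem~\ref{GodotJoinCase2} then yields $\Sd_A({\cal G}\odot(K_t+{\cal H}))=|V|\cdot\Sd_A({\cal H})+|V|\cdot\Sd_A(\text{the }K_t\text{ part})+\Sgamma({\cal G})$, and unwinding the bookkeeping — $\Sd_A$ of the $K_t$ factor contributes $t-1$ per copy — gives $|V|\cdot\dim_A(H)+|V|\cdot(t-1)+\Sgamma({\cal G})$, as claimed. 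The main obstacle I anticipate is purely organisational: keeping straight which of the two join factors plays which role in Theorem~\ref{GodotJoinCase2} (the factor with the "$B\nsubseteq N(v)$" property versus the factor with the "$B'\subseteq N(v')$" property), since $K_t$ is the degenerate one and it is tempting to apply the theorem with the factors swapped; also one should double-check the edge case $t=1$, where $K_1+{\cal H}={\cal H}$ up to an isolated-to-universal vertex identification and $t-1=0$, so the formula degenerates correctly to Proposition~\ref{simAdjDimFamCoronaCase1GenPartCase3}'s single-family analogue.
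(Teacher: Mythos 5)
For $t\ge 2$ your argument is correct and is exactly the derivation the paper intends: Theorem~\ref{dimAPermFamily} gives that $B$ is a simultaneous adjacency basis of ${\cal H}$ with $\Sd_A({\cal H})=\dim_A(H)$; Lemma~\ref{LemmaDiameter>6orPathorCycle} together with the structure of ${\cal G}_B(H)$ gives $B\nsubseteq N_{H'}(v')$ for every $H'\in{\cal H}$ and every $v'\in V'$; every adjacency basis of $K_t$ has the form $V(K_t)-\{v\}\subseteq N_{K_t}(v)$; and Theorems~\ref{SimAdjDimJoinsFamilyCase1} and~\ref{GodotJoinCase2} then yield the formula. Two wording slips do not affect this: your opening sentence assigns the roles backwards ($K_t$ must play ${\cal H}'$ and ${\cal H}$ must play ${\cal H}$ in Theorem~\ref{GodotJoinCase2}), and the parenthetical ``none of which is contained in the neighbourhood of the omitted vertex'' contradicts the containment you then display; the verification you actually carry out uses the correct assignment.

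The genuine gap is the case $t=1$, which the statement allows but your argument does not cover: Theorems~\ref{SimAdjDimJoinsFamilyCase1} and~\ref{GodotJoinCase2} require both join factors to be non-trivial, and your claim that the formula ``degenerates correctly'' is false, since $K_1+{\cal H}$ is not ${\cal H}$ but ${\cal H}$ with a universal vertex added. Concretely, take ${\cal H}=\{P_n\}$ with $n\equiv 1$ or $3 \pmod 5$, $n\ge 7$. Then $\dim_A(K_1+P_n)=\dim_A(P_n)$ (the universal vertex distinguishes no pair of vertices of $P_n$, which gives the lower bound, and an adjacency basis $W$ of $P_n$ with $W\nsubseteq N_{P_n}(v)$ for all $v$, provided by Lemma~\ref{LemmaDiameter>6orPathorCycle}, also resolves the universal vertex, which gives the upper bound); consequently every adjacency basis of $K_1+P_n$ avoids the universal vertex and is an adjacency basis of $P_n$, hence by Lemma~\ref{lemmaNonExistAdjBDominatingCyclePath} it is not a dominating set of $K_1+P_n$. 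Theorem~\ref{simAdjDimFamCoronaCase2} then gives $\Sd_A({\cal G}\odot(K_1+P_n))=|V|\cdot\dim_A(P_n)+|V|-1$, which differs from the claimed $|V|\cdot\dim_A(P_n)+\Sgamma({\cal G})$ whenever $\Sgamma({\cal G})<|V|-1$ (e.g.\ ${\cal G}=\{P_3\}$). So $t=1$ is not merely unchecked: the stated formula can fail there, and the bound $t\ge 1$ is evidently a slip for $t\ge 2$ (compare Proposition~\ref{simAdjDimFamCoronaCase3FamGPartCase2} and the example following the proposition). Your proof should either assume $t\ge 2$ or treat $t=1$ separately, where the answer depends on whether some simultaneous adjacency basis of ${\cal H}$ is a simultaneous dominating set.
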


As an example of the previous result, consider an arbitrary family ${\cal G}$ composed by connected non-trivial graphs on a common vertex set $V$, a complete graph $K_t$ of order $t \ge 2$, a path graph $P_n$ of order $n \ge 7$, and the cycle graph $C_n$ obtained from $P_n$ by joining its leaves by an edge. For any simultaneous adjacency basis $B$ of $\{P_n,C_n\}$ and any family ${\cal H} \in {\cal G}_B(P_n) \cup {\cal G}_B(C_n)$ such that $P_n \in {\cal H}$ or $C_n \in {\cal H}$, we have that $$\Sd_A({\cal G} \odot (K_t+{\cal H}))=|V|\cdot\left(\left\lfloor\frac{2n+2}{5}\right\rfloor+t-1\right)+\Sgamma({\cal G}).$$

\end{document}